\documentclass[11pt]{article}
\usepackage{graphicx}
\usepackage{amsthm,amssymb,tikz-cd, floatrow}
\usepackage{mathtools}
\usepackage[symbol]{footmisc}
\date{}
\newtheorem{theorem}{Theorem}[section]
\newtheorem{lemma}[theorem]{Lemma}
\newtheorem{proposition}[theorem]{Proposition}
\newtheorem{corolary}[theorem]{Corollary}
\newtheorem{definition}[theorem]{Definition}

\newtheorem{observation}[theorem]{Observation}
\newtheorem{rem}[theorem]{Remark}

\begin{document}
	\title{\textbf{On Ultra $k$-Secure Sets in Graphs}}
	\maketitle
	
	\begin{center}
		\author{{\large\bf $^1$K Karthik and $^{2}$Chandru Hegde\footnote{Corresponding Author}} \\ ~ \\ \small \it $^1$ Department of Mathematics, Government Fisrt Grade College,\\ \small Uppinangady-574241, Karnataka, INDIA. \\ \small \it Department of Mathematics, Mangalore University\\ \small Mangalagangothri, Mangalore 574199. INDIA\\ \small \textbf{email:} 
		$^1$karthik1111820@gmail.com, $^2$chandrugh@gmail.com} 
	\end{center}
	
	\begin{abstract}
	In a graph $G=(V,E)$, let $S$ be a non empty subset of $V$. For any $x\in S$, the vertex $x$ and its neighbors inside $S$ are defenders of $x$, whereas those lying outside $S$ are attackers on $x$. An attack on $S$ is an $|S|$-tuple of pairwise disjoint sets of attackers on vertices of $S$, whereas a defense is that of defenders. An attack on $S$ is defendable if $S$ has a defense, which provides at least as many defenders as attackers for each of its vertices. The set $S$ is a secure set if every attack on $S$ is defendable. Further, $S$ is an ultra secure set if $S$ has a defense which successfully defends $S$ against every attack. For an integer $k\geq 0$, a $k$-secure set $S$ is a secure set in which for any attack on $S$, there is a defense of $S$ with $k$ additional defenders for every vertex of $S$ having neighbors outside $S$. As a generalization of ultra security, in this paper  ultra $k$-secure sets and ultra $k$-security number of a graph are introduced. A characterization of ultra $k$-secure sets is obtained and using which ultra $k$-security numbers of complete multipartite graphs and grid-like graphs are computed.
	\end{abstract}
\textbf{Keywords:}{ Secure sets, $k$-Secure Sets, Ultra Secure Sets, Ultra $k$-Secure Sets.}\\
\textbf{AMS Subject Classification Number: }{05C69, 05C70, 05C76.}

\section{Introduction}
Secure sets\cite{brigham2007security} were introduced by R.C. Brigham et al. in 2007. Let $G=(V,E)$ be a simple connected graph and $S=\{x_1,x_2,\ldots,x_m\}\subseteq V$. For any $x\in S$, the vertices of $N[x]-S$ are attackers of $x$ whereas the vertices in $N[x]\cap S$ are defenders of $x$ with respect to $S$. An attack on $S$ is an ordered $m$-tuple $\mathcal{A}=(A_1, A_2, \ldots, A_m)$ of pairwise disjoint sets $A_i\subseteq N[x_i]-S$, and is said to be defendable if there exists a defense $\mathcal{D}$, which is an ordered $m$-tuple $(D_1,D_2,\ldots, D_m)$ of pairwise disjoint sets $D_i\subseteq N[x_i]\cap S$, such that $|D_i|\geq |A_i|$ for each $i$, $1\leq i\leq m$. The set $S$ is a secure set if every attack on $S$ is defendable. Further if $S$ has a defense which defends it against every attack, then $S$ is an ultra secure set\cite{petrie2014f,petrie2012security}. The security number $s(G)$ is the minimum cardinality of a secure set in $G$. The minimum cardinality of an ultra secure set in $G$ is the ultra security number $s^u(G)$. More results on secure sets and bounds on security numbers are found in \cite{dutton2009graph,dutton2008bounds,globalsecuritygridgraphs}. Security numbers of various products of graphs are obtained in \cite{lexicographicproductsecurity,productgraphsecurity,kozawa2009security,strongproductsecurity}. The security of Sierpiński graphs is discussed in \cite{menon2023security}. As a generalization of secure sets, $k$-secure sets were introduced in \cite{ksecurity}.   Notably, a 0-secure set is simply a secure set as defined in \cite{brigham2007security}.

The concept of security in graphs has significant potential for various real-world applications. Secure sets can be used to design secure networks that are resilient to attacks or failures. Determining the security number assists in assessing the minimum level of redundancy or connectivity needed to maintain network stability. Additionally, ultra secure sets provide a theoretical framework for designing robust resource allocation and defense strategies in communication and infrastructure networks, where multiple simultaneous failures or attacks must be effectively countered. They also have potential applications in fault-tolerant network design and the protection of critical facilities in distributed systems.

In this paper, ultra $k$-secure sets are introduced by extending the idea of ultra secure sets to $k$-secure sets. The ultra $k$-security number of a graph is defined as the minimum cardinality of an ultra $k$-secure set in $G$, denoted by $s_k^u(G)$. A characterization of ultra $k$-secure sets is derived. Using the characterization, some bounds on ultra $k$-security number are obtained. Further, the ultra $k$-security number of complete multipartite graphs, Cartesian products of paths and cycles are obtained. 

For a nonempty set $S\subseteq V$, the sets $Bord(S)=\{x\in S: N[x]-S\ne\emptyset\}$, $Int(S)=S-Bord(S)$ and  $\partial S=N[S]-S$ are the border, interior and boundary of $S$ respectively. For $S\subset V$, $\langle S \rangle$ denotes the subgraph induced by $S$. The basic graph theory notions used in this paper may be found in \cite{graphtheorydbwest2001}.


\section{Ultra $k$-secure sets}
A different approach to secure sets is presented in \cite{kdistancesecurity}, where secure sets are generalized to $k$-distance secure sets by viewing attacks and defenses as functions with appropriate domains and co-domains, and by considering attackers and defenders even from distances greater than 1. A comprehensive demonstration of this approach and its equivalence with secure sets of \cite{brigham2007security} can be found in \cite{kdistancesecurity}. Throughout the discussion, we use the notions of attack and defense from \cite{kdistancesecurity}. 

Let $S$ be a nonempty proper subset of $V$. An attack $A$ on $S$ is a function $A:\partial S\to Bord(S)$ such that for any $x\in \partial S$, $x$ is adjacent to $A(x)$. A defense $D$ of $S$ is a function $D:N[Bord(S)]\cap S\to Bord(S)$ such that for any $y\in N[Bord(S)]\cap S$, $D(y)$ is equal to $y$ or adjacent to $y$.  For $0\leq k< \Delta(G)$, a set $S$ is a $k$-secure set\cite{ksecurity} if for any attack $A$ on $S$, there exists a defense $D$ such that $|D^{-1}(x)|-|A^{-1}(x)|\geq k$ for all $x\in Bord(S)$. 

As there is no attack on $V$, it is a $k$-secure set for all $k$. The minimum cardinality of a $k$-secure set in $G$ is the $k$-security number\cite{ksecurity}, denoted by $s_k(G)$. The following is a characterization of $k$-secure sets.
\begin{theorem}\label{char k secure}\cite{ksecurity}
	For $0\leq k<\Delta(G)$, a set $S\subseteq V$ is a $k$-secure set if and only if for any $X\subseteq Bord(S)$, $|N[X]\cap S|\geq |N[X]-S|+k|X|$.
\end{theorem}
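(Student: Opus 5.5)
We prove the characterization of Theorem~\ref{char k secure} by reducing it to Hall's theorem via vertex splitting. Fix an attack $A$. A defense $D$ with $|D^{-1}(x)|-|A^{-1}(x)|\geq k$ for every $x\in Bord(S)$ is exactly an assignment of each defender $y\in N[Bord(S)]\cap S$ to a border vertex of $N[y]$ in which every $x\in Bord(S)$ receives at least $|A^{-1}(x)|+k$ defenders. Form a bipartite graph $H_A$. One side consists of $|A^{-1}(x)|+k$ copies of each $x\in Bord(S)$. The other side is $N[Bord(S)]\cap S$. A copy of $x$ is joined to $y$ whenever $y\in N[x]$. A valid defense exists if and only if $H_A$ has a matching saturating all the copies: unmatched defenders are sent to an arbitrary border vertex in their closed neighbourhood, which only increases counts. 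By Hall's theorem this is equivalent to $|N[X]\cap S|\geq \sum_{x\in X}(|A^{-1}(x)|+k)$ for every $X\subseteq Bord(S)$. It suffices to take all copies of the vertices in $X$, since adding copies does not enlarge the neighbourhood in $H_A$.

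For sufficiency, assume $|N[X]\cap S|\geq |N[X]-S|+k|X|$ for all $X\subseteq Bord(S)$. For any attack $A$, the attackers assigned to vertices of $X$ lie in $N[X]-S$, and each attacker is assigned to exactly one vertex. Hence $\sum_{x\in X}|A^{-1}(x)|\leq |N[X]-S|$. Hall's condition for $H_A$ then holds, so a defense exists and $S$ is $k$-secure.

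For necessity, suppose the inequality fails for some $X\subseteq Bord(S)$. We build a single attack $A$ that sends every vertex of $N[X]-S$ to some vertex of $X$ adjacent to it; this is possible by the definition of $N[X]$. Vertices of $\partial S$ outside $N[X]$ are sent to any adjacent border vertex. Then $\sum_{x\in X}|A^{-1}(x)|=|N[X]-S|$, so Hall's condition fails at $X$. Consequently no defense works against $A$, and $S$ is not $k$-secure.

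The main obstacle is the careful bookkeeping that makes the Hall reduction exact. Defenders outside $N[Bord(S)]$ play no role, so the domain of $D$ is correct as stated. Every $y$ in the domain has at least one border vertex in $N[y]$, which the function form of $D$ requires. Finally, $D$ must be total on its domain, and we check that defenders left unmatched in $H_A$ can always be dumped harmlessly onto some border vertex. The case $S=V$ is vacuous, since then $Bord(S)=\emptyset$ and there is no attack.
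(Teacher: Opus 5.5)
Your proof is correct. The paper gives no proof of this statement: it imports it from the cited work on $k$-secure sets and uses it as a black box, so there is nothing in the paper to compare against step by step.

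Your vertex-splitting reduction to Hall's theorem is exact, for three reasons. Leftover defenders can always be sent to a border vertex in their closed neighbourhood. Restricting Hall's condition to full copy-sets loses nothing. Your extremal attack is well defined, because $N[X]-S\subseteq\partial S$ and every vertex of $\partial S$ has a neighbour in $Bord(S)$. Hall's theorem is only needed for sufficiency. Necessity is plain counting: any defense satisfies $D^{-1}(X)\subseteq N[X]\cap S$, so $|N[X]\cap S|\ge\sum_{x\in X}|D^{-1}(x)|$, which contradicts the failed inequality against your attack.

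Your route also gives the paper's own Theorem~\ref{char ultra k secure} directly. An ultra $k$-defense is exactly a defense with $|D^{-1}(x)|\ge |N[x]-S|+k$ for every $x\in Bord(S)$; this is Proposition~\ref{ultra k defense proposition} together with its trivial converse. So taking $|N[x]-S|+k$ copies of each $x$ in your bipartite graph yields $|N[X]\cap S|\ge k|X|+\sum_{x\in X}|N[x]-S|$ immediately. The paper gets there indirectly: it builds the auxiliary graph $G(S)$, whose unique attack realizes these counts, and then invokes the present theorem through Lemmas~\ref{lemma-1} and~\ref{lemma-2}.
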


The following remark follows immediately by the Theorem \ref{char k secure}.

\begin{rem}\label{degree of border vertex}
	For an integer $k\geq 1$, if $S$ is a $k$-secure set, then $deg(x)\geq k+1$ for all $x\in Bord(S)$.
\end{rem}

\begin{definition}
	For $0\leq k<\Delta(G)$, a defense $D$ of $S\subset V$ is an ultra $k$-defense if for every attack $A$ on $S$, $|D^{-1}(x)|-|A^{-1}(x)|\geq k$ for all $x\in Bord(S)$.
	A nonempty set $S\subseteq V$ is an ultra $k$-secure set if $S=V$ or $S$ has an ultra $k$-defense.
\end{definition}

By the definition, every ultra $k$-secure set is a $k$-secure set. Ultra 0-secure sets coincide with the ultra secure sets defined in \cite{petrie2014f,petrie2012security}. 

\subsection{Properties of ultra $k$-secure sets}
For any vertex $x$ in $Bord(S)$, there is an attack that maps every neighbor of $x$ outside of $S$ to $x$. This makes the next statement clear.
\begin{proposition}\label{ultra k defense proposition}
	Let $S$ be an ultra $k$-secure set and $D$ be an ultra $k$-defense of $S$. Then $|D^{-1}(x)|\geq |N[x]-S|+k$ for all $x\in Bord(S)$.
\end{proposition}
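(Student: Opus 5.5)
The plan is to test the ultra $k$-defense $D$ against one well-chosen attack for each border vertex. Fix $x\in Bord(S)$. We want an attack $A_x$ that sends as many attackers to $x$ as possible, namely all of $N[x]-S=N(x)-S$. Since $D$ is an ultra $k$-defense, the defining inequality holds for every attack. Applied to $A_x$, it will give the claim at once.

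First I will build $A_x:\partial S\to Bord(S)$. For $y\in N(x)-S$, put $A_x(y)=x$; this is allowed because $y$ is adjacent to $x$. For any other $y\in\partial S$, the vertex $y$ lies in $N[S]-S$, so it has a neighbour $z\in S$. That neighbour $z$ is in $Bord(S)$, since $y\in N[z]-S$. Put $A_x(y)=z$ for one such $z$, chosen arbitrarily. This makes $A_x$ a genuine attack in the sense used here.

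Next I will count the preimage of $x$. We have $A_x^{-1}(x)\supseteq N[x]-S$. Conversely, every $y$ with $A_x(y)=x$ is adjacent to $x$ and lies outside $S$. Hence $A_x^{-1}(x)=N[x]-S$, and in particular $|A_x^{-1}(x)|=|N[x]-S|$.

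Finally, since $D$ is an ultra $k$-defense, $|D^{-1}(x)|-|A_x^{-1}(x)|\ge k$. This yields $|D^{-1}(x)|\ge |N[x]-S|+k$. Because $x$ was arbitrary and $D$ does not depend on the attack, the bound holds for all $x\in Bord(S)$ simultaneously.

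The case $S=V$ is vacuous, since then $Bord(S)=\emptyset$. The only step needing any care is checking that every vertex of $\partial S$ can be assigned to some border vertex. This follows directly from the definitions of $\partial S$ and $Bord(S)$, so I expect no real obstacle.
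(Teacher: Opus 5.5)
Your proposal is correct and follows the paper's approach: the paper justifies the proposition by noting that for each $x\in Bord(S)$ there is an attack sending every neighbor of $x$ outside $S$ to $x$, and then applying the defining inequality of the ultra $k$-defense to that attack. You additionally supply the details the paper leaves implicit: you extend the attack to all of $\partial S$ via border neighbors, and you check that $A_x^{-1}(x)=N[x]-S$ exactly.
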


The following result follows by the fact that the vertices belonging to different components have disjoint sets of defenders.
\begin{proposition}\label{min is connected}
	For an integer $k$ with $0\leq k<\Delta(G)$, if $S\subset V$ is an ultra $k$-secure set and $\langle C\rangle$ is a component of $\langle S\rangle$, then $C$ is an ultra $k$-secure set.
\end{proposition}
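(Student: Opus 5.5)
We take an ultra $k$-defense $D$ of $S$ and restrict it to the part of the domain that belongs to $C$. The key fact is that $C$ is a connected component of $\langle S\rangle$, so no vertex of $S-C$ is adjacent to a vertex of $C$. First we record how the relevant sets of $C$ compare with those of $S$. Since $C\subseteq S$ and $N[x]\cap S=N[x]\cap C$ for every $x\in C$, we have $N[x]-C=N[x]-S$ for all $x\in C$. Hence $Bord(C)=Bord(S)\cap C$, and $\partial C=N[C]-C=N[C]-S\subseteq \partial S$. We may assume $C\neq V$, since otherwise $C$ is trivially ultra $k$-secure. Note also that $C\ne V$ automatically, because $S\subset V$.

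Next we define $D_C$ on $N[Bord(C)]\cap C$. For $y\in N[Bord(C)]\cap C$, we have $y\in N[Bord(S)]\cap S$, so $D(y)$ is defined and lies in $Bord(S)$. Moreover $D(y)$ equals $y$ or is adjacent to $y$. Since $y\in C$ and $C$ is a component of $\langle S\rangle$, this forces $D(y)\in C$, so $D(y)\in Bord(S)\cap C=Bord(C)$. We set $D_C(y)=D(y)$, which is a defense of $C$. Conversely, any $y\in S$ with $D(y)=x\in Bord(C)$ is in $N[x]\cap S\subseteq C$, and $y\in N[Bord(C)]$. Therefore $D_C^{-1}(x)=D^{-1}(x)$ for every $x\in Bord(C)$.

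Finally we verify the ultra condition. Let $A$ be any attack on $C$, that is, $A:\partial C\to Bord(C)$ with $A(z)$ adjacent to $z$. We extend $A$ to an attack $A'$ on $S$: set $A'=A$ on $\partial C$, and for each $z\in\partial S-\partial C$ choose any neighbor of $z$ in $S$. Such a neighbor lies in $Bord(S)$, since it is adjacent to $z\notin S$. This gives a valid attack with $A'^{-1}(x)\supseteq A^{-1}(x)$ for $x\in Bord(C)$; in fact equality holds, because any $z$ with $A'(z)=x\in C$ is in $N[C]-S=\partial C$. Since $D$ is an ultra $k$-defense,
\[
|D_C^{-1}(x)|-|A^{-1}(x)| = |D^{-1}(x)|-|A'^{-1}(x)|\geq k \quad\text{for all } x\in Bord(C).
\]
Thus $D_C$ is an ultra $k$-defense of $C$. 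The only delicate point is checking that the restriction and the extension stay within $C$, and this rests entirely on the absence of edges between $C$ and $S-C$.
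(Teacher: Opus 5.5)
Your proof is correct and follows the paper's approach. The paper justifies this proposition in one line: vertices in different components of $\langle S\rangle$ have disjoint sets of defenders, so an ultra $k$-defense of $S$ restricts to one of $C$. Your argument supplies the details that line omits, namely $Bord(C)=Bord(S)\cap C$, $D_C^{-1}(x)=D^{-1}(x)$ for $x\in Bord(C)$, and the extension of an attack on $C$ to an attack on $S$.
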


%

We derive a characterization of ultra $k$-secure sets as follows.
\begin{theorem}\label{char ultra k secure}
	For $0\leq k<\Delta(G)$, a set $S\subseteq V$ is an ultra $k$-secure set if and only if $|N[X]\cap S|\geq k|X|+\underset{x\in X}{\sum}|N[x]-S|$  for all $X\subseteq Bord(S)$.
\end{theorem}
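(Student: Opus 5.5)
The plan is to reduce ultra $k$-security to a bipartite matching (Hall-type) condition, using Proposition \ref{ultra k defense proposition} to pin down what an ultra $k$-defense must provide. The case $S=V$ is trivial: $Bord(V)=\emptyset$, so the inequality is vacuous. Assume henceforth that $S\subset V$ is proper.

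First I will show that $D$ is an ultra $k$-defense if and only if $|D^{-1}(x)|\geq |N[x]-S|+k$ for every $x\in Bord(S)$. One direction is exactly Proposition \ref{ultra k defense proposition}. For the converse, any attack $A$ satisfies $A^{-1}(x)\subseteq N[x]-S$, so $|A^{-1}(x)|\le |N[x]-S|$ and hence $|D^{-1}(x)|-|A^{-1}(x)|\ge k$. So $S$ is ultra $k$-secure precisely when there is a function $D:N[Bord(S)]\cap S\to Bord(S)$, with $D(y)\in N[y]$, such that each $x\in Bord(S)$ receives at least $r(x):=|N[x]-S|+k$ preimages.

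Next I recast this as a matching problem. Build a bipartite graph $H$. On one side, each $x\in Bord(S)$ is replaced by $r(x)$ copies. The other side is $Y=N[Bord(S)]\cap S$. A copy of $x$ is joined to $y\in Y$ whenever $y\in N[x]$. A defense with the required counts gives a matching of $H$ that saturates all the copies: for each $x$, pick $r(x)$ elements of $D^{-1}(x)$. Conversely, such a matching defines $D$ on the matched vertices. Every unmatched $y\in Y$ lies in $N[x']$ for some $x'\in Bord(S)$ by the definition of $Y$, so we send it to any such $x'$. This yields a legitimate defense, and adding extra preimages only increases counts.

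By Hall's theorem, the copies can all be saturated if and only if every set of copies has at least as many neighbours as elements. For a fixed set $X\subseteq Bord(S)$ of originals, the worst case takes all copies of each $x\in X$. The neighbourhood of that set of copies in $H$ is $N[X]\cap S$, because $N[X]\cap S\subseteq Y$. Hall's condition therefore becomes $|N[X]\cap S|\ge \sum_{x\in X}r(x)=k|X|+\sum_{x\in X}|N[x]-S|$ for all $X\subseteq Bord(S)$, which is exactly the stated inequality.

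The main point needing care is the reduction to Hall's condition. I must check that restricting to full sets of copies loses nothing, which holds because the neighbourhood depends only on the set of originals. I must also make sure that extending a partial matching to a total function $D$ on $N[Bord(S)]\cap S$ is valid, which is the role of the choice of $Y$. Everything else follows from the definitions.
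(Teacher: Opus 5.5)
Your proof is correct, but it is organized differently from the paper's. The paper never applies Hall's theorem directly. It builds the auxiliary graph $G(S)$, in which every border vertex gets private copies of its outside neighbours, so that $S$ has exactly one attack in $G(S)$, namely the worst one. Lemma~\ref{lemma-1} shows that $S$ is ultra $k$-secure in $G$ if and only if it is $k$-secure in $G(S)$. Lemma~\ref{lemma-2} then reads off the inequality from the known characterization of $k$-secure sets (Theorem~\ref{char k secure}). It uses that the outside neighbourhoods of distinct border vertices are disjoint in $G(S)$, so $|N^{G(S)}[X]-S|=\sum_{x\in X}|N[x]-S|$.

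Your first step (a defense is an ultra $k$-defense iff each $x\in Bord(S)$ receives at least $|N[x]-S|+k$ preimages) plays the role of that unique attack in $G(S)$. Your copy-splitting Hall argument is in effect a direct proof of Theorem~\ref{char k secure} specialised to this single worst-case demand.

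The paper's route reuses the existing $k$-secure characterization and gives the reformulation ``ultra $k$-secure in $G$ equals $k$-secure in $G(S)$''. Yours is self-contained and avoids the bookkeeping of transferring borders and defenses between $G$ and $G(S)$. It also never uses the hypothesis $k<\Delta(G)$. The paper, by contrast, formally invokes Theorem~\ref{char k secure} inside $G(S)$, whose maximum degree can be smaller than $\Delta(G)$ when a vertex of maximum degree lies outside $S$.
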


\noindent To prove Theorem \ref{char ultra k secure}, we construct a new graph $G(S)$ using $S$ and its boundary. In a graph $H$, for any $x\in V(H)$ and $X\subseteq V(H)$, the notations $N^{H}(x), N^{H}[x], N^{H}[X]$ denote the respective neighborhoods. However, we may not specify the superscript whenever the context is clear. 

Let $S\subset V(G)$ and $Bord(S)= \{b_1,b_2,\ldots, b_l\}$. The graph $G(S)$ is constructed as follows. 
\begin{enumerate}
	\item[Step-1:] For $i=1$, initialize $H \coloneq \langle S\rangle$ and $x\coloneq b_1$. 
	\item[Step-2:] Consider $\langle N^{\langle (V-S)\cup \{x\}\rangle}[x]\rangle$ and relabel the vertices other than $x$ as $y_{x1}, y_{x2},\ldots, y_{xt_x}$ where $t_x=|V\big(\langle N^{\langle (V-S)\cup \{x\}\rangle}(x)\rangle\big)|$, so that none of these vertices have same labeling as vertices of $G$. Let $X$ be the resulting graph.
	\item[Step-3:] Set $H'=H\cup X$. 
	\item[Step-4:] If $1\leq i< l$, then reset $i\coloneq i+1$,  $H\coloneq H'$, $x\coloneq b_i$ and go to Step-2. If $i=l$, $G(S)=H'$ and stop.
\end{enumerate}
A demonstration of constructing $G(S)$ is shown in Figures \ref{G(S)fig1}, \ref{G(S)fig2}, \ref{G(S)fig3} and \ref{G(S)fig4} by taking $S=\{b_1,b_2,b_3,s_1\}$.

\begin{figure}[H]
	\begin{floatrow}[2]
		\ffigbox{}{

			\tikzset{every picture/.style={line width=0.75pt}} 
			
			\begin{tikzpicture}[x=0.75pt,y=0.75pt,yscale=-1,xscale=1]
				
				\draw  [fill={rgb, 255:red, 0; green, 0; blue, 0 }  ,fill opacity=1 ] (141,85) .. controls (141,82.79) and (142.79,81) .. (145,81) .. controls (147.21,81) and (149,82.79) .. (149,85) .. controls (149,87.21) and (147.21,89) .. (145,89) .. controls (142.79,89) and (141,87.21) .. (141,85) -- cycle ;
				\draw  [fill={rgb, 255:red, 0; green, 0; blue, 0 }  ,fill opacity=1 ] (141,116) .. controls (141,113.79) and (142.79,112) .. (145,112) .. controls (147.21,112) and (149,113.79) .. (149,116) .. controls (149,118.21) and (147.21,120) .. (145,120) .. controls (142.79,120) and (141,118.21) .. (141,116) -- cycle ;
				\draw  [fill={rgb, 255:red, 0; green, 0; blue, 0 }  ,fill opacity=1 ] (95,100) .. controls (95,97.79) and (96.79,96) .. (99,96) .. controls (101.21,96) and (103,97.79) .. (103,100) .. controls (103,102.21) and (101.21,104) .. (99,104) .. controls (96.79,104) and (95,102.21) .. (95,100) -- cycle ;
				\draw  [fill={rgb, 255:red, 0; green, 0; blue, 0 }  ,fill opacity=1 ] (176,60) .. controls (176,57.79) and (177.79,56) .. (180,56) .. controls (182.21,56) and (184,57.79) .. (184,60) .. controls (184,62.21) and (182.21,64) .. (180,64) .. controls (177.79,64) and (176,62.21) .. (176,60) -- cycle ;
				\draw  [fill={rgb, 255:red, 0; green, 0; blue, 0 }  ,fill opacity=1 ] (187,100) .. controls (187,97.79) and (188.79,96) .. (191,96) .. controls (193.21,96) and (195,97.79) .. (195,100) .. controls (195,102.21) and (193.21,104) .. (191,104) .. controls (188.79,104) and (187,102.21) .. (187,100) -- cycle ;
				\draw  [fill={rgb, 255:red, 0; green, 0; blue, 0 }  ,fill opacity=1 ] (141,156) .. controls (141,153.79) and (142.79,152) .. (145,152) .. controls (147.21,152) and (149,153.79) .. (149,156) .. controls (149,158.21) and (147.21,160) .. (145,160) .. controls (142.79,160) and (141,158.21) .. (141,156) -- cycle ;
				\draw  [fill={rgb, 255:red, 0; green, 0; blue, 0 }  ,fill opacity=1 ] (161,186) .. controls (161,183.79) and (162.79,182) .. (165,182) .. controls (167.21,182) and (169,183.79) .. (169,186) .. controls (169,188.21) and (167.21,190) .. (165,190) .. controls (162.79,190) and (161,188.21) .. (161,186) -- cycle ;
				\draw  [fill={rgb, 255:red, 0; green, 0; blue, 0 }  ,fill opacity=1 ] (141,42) .. controls (141,39.79) and (142.79,38) .. (145,38) .. controls (147.21,38) and (149,39.79) .. (149,42) .. controls (149,44.21) and (147.21,46) .. (145,46) .. controls (142.79,46) and (141,44.21) .. (141,42) -- cycle ;
				\draw  [fill={rgb, 255:red, 0; green, 0; blue, 0 }  ,fill opacity=1 ] (176,141) .. controls (176,138.79) and (177.79,137) .. (180,137) .. controls (182.21,137) and (184,138.79) .. (184,141) .. controls (184,143.21) and (182.21,145) .. (180,145) .. controls (177.79,145) and (176,143.21) .. (176,141) -- cycle ;
				\draw  [fill={rgb, 255:red, 0; green, 0; blue, 0 }  ,fill opacity=1 ] (122,185) .. controls (122,182.79) and (123.79,181) .. (126,181) .. controls (128.21,181) and (130,182.79) .. (130,185) .. controls (130,187.21) and (128.21,189) .. (126,189) .. controls (123.79,189) and (122,187.21) .. (122,185) -- cycle ;
				\draw    (145,42) -- (145,156) ;
				\draw    (99,100) -- (145,85) ;
				\draw    (99,100) -- (145,116) ;
				\draw    (145,116) -- (191,100) ;
				\draw    (145,85) -- (191,100) ;
				\draw    (145,116) -- (180,141) ;
				\draw    (145,85) -- (180,60) ;
				\draw    (180,59) -- (191,100) ;
				\draw    (191,100) -- (180,141) ;
				\draw    (145,156) -- (165,186) ;
				\draw    (145,156) -- (126,185) ;
				\draw    (126,185) -- (165,186) ;
				\draw    (99,100) -- (145,156) ;
				\draw   (76,84.5) .. controls (76,49.43) and (96.15,21) .. (121,21) .. controls (145.85,21) and (166,49.43) .. (166,84.5) .. controls (166,119.57) and (145.85,148) .. (121,148) .. controls (96.15,148) and (76,119.57) .. (76,84.5) -- cycle ;
				\draw  [fill={rgb, 255:red, 0; green, 0; blue, 0 }  ,fill opacity=1 ] (191,26) .. controls (191,23.79) and (192.79,22) .. (195,22) .. controls (197.21,22) and (199,23.79) .. (199,26) .. controls (199,28.21) and (197.21,30) .. (195,30) .. controls (192.79,30) and (191,28.21) .. (191,26) -- cycle ;
				\draw    (180,60) -- (195,26) ;
				
				\draw (82,79) node [anchor=north west][inner sep=0.75pt]   [align=left] {$\displaystyle b_{1}$};
				\draw (128,113) node [anchor=north west][inner sep=0.75pt]   [align=left] {$\displaystyle b_{2}$};
				\draw (124,65) node [anchor=north west][inner sep=0.75pt]   [align=left] {$\displaystyle b_{3}$};
				\draw (124,29) node [anchor=north west][inner sep=0.75pt]   [align=left] {$\displaystyle s_{1}$};
				\draw (185,49) node [anchor=north west][inner sep=0.75pt]   [align=left] {$\displaystyle a_{1}$};
				\draw (196,91) node [anchor=north west][inner sep=0.75pt]   [align=left] {$\displaystyle a_{2}$};
				\draw (184,133) node [anchor=north west][inner sep=0.75pt]   [align=left] {$\displaystyle a_{3}$};
				\draw (150,150) node [anchor=north west][inner sep=0.75pt]   [align=left] {$\displaystyle a_{4}$};
				\draw (106,182) node [anchor=north west][inner sep=0.75pt]   [align=left] {$\displaystyle v_{1}$};
				\draw (170,179) node [anchor=north west][inner sep=0.75pt]   [align=left] {$\displaystyle v_{2}$};
				\draw (58,66) node [anchor=north west][inner sep=0.75pt]   [align=left] {{\large \textbf{\textit{S}}}};
				\draw (173,16) node [anchor=north west][inner sep=0.75pt]   [align=left] {$\displaystyle v_{3}$};

			\end{tikzpicture}\caption{Graph G}\label{G(S)fig1}}
		
		\ffigbox{}{

			\tikzset{every picture/.style={line width=0.75pt}} 
			
			\begin{tikzpicture}[x=0.8pt,y=0.8pt,yscale=-1,xscale=1]
				
				\draw  [fill={rgb, 255:red, 0; green, 0; blue, 0 }  ,fill opacity=1 ] (251,134) .. controls (251,131.79) and (252.79,130) .. (255,130) .. controls (257.21,130) and (259,131.79) .. (259,134) .. controls (259,136.21) and (257.21,138) .. (255,138) .. controls (252.79,138) and (251,136.21) .. (251,134) -- cycle ;
				\draw  [fill={rgb, 255:red, 0; green, 0; blue, 0 }  ,fill opacity=1 ] (251,165) .. controls (251,162.79) and (252.79,161) .. (255,161) .. controls (257.21,161) and (259,162.79) .. (259,165) .. controls (259,167.21) and (257.21,169) .. (255,169) .. controls (252.79,169) and (251,167.21) .. (251,165) -- cycle ;
				\draw  [fill={rgb, 255:red, 0; green, 0; blue, 0 }  ,fill opacity=1 ] (205,149) .. controls (205,146.79) and (206.79,145) .. (209,145) .. controls (211.21,145) and (213,146.79) .. (213,149) .. controls (213,151.21) and (211.21,153) .. (209,153) .. controls (206.79,153) and (205,151.21) .. (205,149) -- cycle ;
				\draw  [fill={rgb, 255:red, 0; green, 0; blue, 0 }  ,fill opacity=1 ] (206,204) .. controls (206,201.79) and (207.79,200) .. (210,200) .. controls (212.21,200) and (214,201.79) .. (214,204) .. controls (214,206.21) and (212.21,208) .. (210,208) .. controls (207.79,208) and (206,206.21) .. (206,204) -- cycle ;
				\draw  [fill={rgb, 255:red, 0; green, 0; blue, 0 }  ,fill opacity=1 ] (251,91) .. controls (251,88.79) and (252.79,87) .. (255,87) .. controls (257.21,87) and (259,88.79) .. (259,91) .. controls (259,93.21) and (257.21,95) .. (255,95) .. controls (252.79,95) and (251,93.21) .. (251,91) -- cycle ;
				\draw    (255,91) -- (255,165) ;
				\draw    (209,149) -- (255,134) ;
				\draw    (209,149) -- (255,165) ;
				\draw    (209,149) -- (210,204) ;
				
				\draw (189,139) node [anchor=north west][inner sep=0.75pt]   [align=left] {$\displaystyle b_{1}$};
				\draw (238,163) node [anchor=north west][inner sep=0.75pt]   [align=left] {$\displaystyle b_{2}$};
				\draw (236,116) node [anchor=north west][inner sep=0.75pt]   [align=left] {$\displaystyle b_{3}$};
				\draw (236,75) node [anchor=north west][inner sep=0.75pt]   [align=left] {$\displaystyle s_{1}$};
				\draw (202,210) node [anchor=north west][inner sep=0.75pt]   [align=left] {$\displaystyle y_{11}$};

			\end{tikzpicture}\caption{Graph $H'$ for $i=1$}\label{G(S)fig2}}
	\end{floatrow}
	
\end{figure}

\begin{figure}[H]
	\begin{floatrow}[2]
		\ffigbox{}{
			\tikzset{every picture/.style={line width=0.75pt}} 
			
			\begin{tikzpicture}[x=0.8pt,y=0.8pt,yscale=-1,xscale=1]
				
				\draw  [fill={rgb, 255:red, 0; green, 0; blue, 0 }  ,fill opacity=1 ] (356,141) .. controls (356,138.79) and (357.79,137) .. (360,137) .. controls (362.21,137) and (364,138.79) .. (364,141) .. controls (364,143.21) and (362.21,145) .. (360,145) .. controls (357.79,145) and (356,143.21) .. (356,141) -- cycle ;
				\draw  [fill={rgb, 255:red, 0; green, 0; blue, 0 }  ,fill opacity=1 ] (356,172) .. controls (356,169.79) and (357.79,168) .. (360,168) .. controls (362.21,168) and (364,169.79) .. (364,172) .. controls (364,174.21) and (362.21,176) .. (360,176) .. controls (357.79,176) and (356,174.21) .. (356,172) -- cycle ;
				\draw  [fill={rgb, 255:red, 0; green, 0; blue, 0 }  ,fill opacity=1 ] (310,156) .. controls (310,153.79) and (311.79,152) .. (314,152) .. controls (316.21,152) and (318,153.79) .. (318,156) .. controls (318,158.21) and (316.21,160) .. (314,160) .. controls (311.79,160) and (310,158.21) .. (310,156) -- cycle ;
				\draw  [fill={rgb, 255:red, 0; green, 0; blue, 0 }  ,fill opacity=1 ] (311,211) .. controls (311,208.79) and (312.79,207) .. (315,207) .. controls (317.21,207) and (319,208.79) .. (319,211) .. controls (319,213.21) and (317.21,215) .. (315,215) .. controls (312.79,215) and (311,213.21) .. (311,211) -- cycle ;
				\draw  [fill={rgb, 255:red, 0; green, 0; blue, 0 }  ,fill opacity=1 ] (356,98) .. controls (356,95.79) and (357.79,94) .. (360,94) .. controls (362.21,94) and (364,95.79) .. (364,98) .. controls (364,100.21) and (362.21,102) .. (360,102) .. controls (357.79,102) and (356,100.21) .. (356,98) -- cycle ;
				\draw    (360,98) -- (360,172) ;
				\draw    (314,156) -- (360,141) ;
				\draw    (314,156) -- (360,172) ;
				\draw    (314,156) -- (315,211) ;
				\draw  [fill={rgb, 255:red, 0; green, 0; blue, 0 }  ,fill opacity=1 ] (356,211) .. controls (356,208.79) and (357.79,207) .. (360,207) .. controls (362.21,207) and (364,208.79) .. (364,211) .. controls (364,213.21) and (362.21,215) .. (360,215) .. controls (357.79,215) and (356,213.21) .. (356,211) -- cycle ;
				\draw    (360,172) -- (360,211) ;
				\draw  [fill={rgb, 255:red, 0; green, 0; blue, 0 }  ,fill opacity=1 ] (404,165) .. controls (404,162.79) and (405.79,161) .. (408,161) .. controls (410.21,161) and (412,162.79) .. (412,165) .. controls (412,167.21) and (410.21,169) .. (408,169) .. controls (405.79,169) and (404,167.21) .. (404,165) -- cycle ;
				\draw    (360,172) -- (408,191) ;
				\draw    (360,172) -- (408,165) ;
				\draw    (408,165) -- (408,191) ;
				\draw  [fill={rgb, 255:red, 0; green, 0; blue, 0 }  ,fill opacity=1 ] (404,191) .. controls (404,188.79) and (405.79,187) .. (408,187) .. controls (410.21,187) and (412,188.79) .. (412,191) .. controls (412,193.21) and (410.21,195) .. (408,195) .. controls (405.79,195) and (404,193.21) .. (404,191) -- cycle ;
				
				\draw (293,146) node [anchor=north west][inner sep=0.75pt]   [align=left] {$\displaystyle b_{1}$};
				\draw (340,170) node [anchor=north west][inner sep=0.75pt]   [align=left] {$\displaystyle b_{2}$};
				\draw (340,124) node [anchor=north west][inner sep=0.75pt]   [align=left] {$\displaystyle b_{3}$};
				\draw (340,88) node [anchor=north west][inner sep=0.75pt]   [align=left] {$\displaystyle s_{1}$};
				\draw (307,218) node [anchor=north west][inner sep=0.75pt]   [align=left] {$\displaystyle y_{11}$};
				\draw (354,218) node [anchor=north west][inner sep=0.75pt]   [align=left] {$\displaystyle y_{21}$};
				\draw (401,197) node [anchor=north west][inner sep=0.75pt]   [align=left] {$\displaystyle y_{22}$};
				\draw (412,155) node [anchor=north west][inner sep=0.75pt]   [align=left] {$\displaystyle y_{23}$};

			\end{tikzpicture}\caption{Graph $H'$ for $i=2$}\label{G(S)fig3}}
		\ffigbox{}{

			\tikzset{every picture/.style={line width=0.75pt}} 
			
			\begin{tikzpicture}[x=0.8pt,y=0.8pt,yscale=-1,xscale=1]
				
				\draw  [fill={rgb, 255:red, 0; green, 0; blue, 0 }  ,fill opacity=1 ] (310,140) .. controls (310,137.79) and (311.79,136) .. (314,136) .. controls (316.21,136) and (318,137.79) .. (318,140) .. controls (318,142.21) and (316.21,144) .. (314,144) .. controls (311.79,144) and (310,142.21) .. (310,140) -- cycle ;
				\draw  [fill={rgb, 255:red, 0; green, 0; blue, 0 }  ,fill opacity=1 ] (310,171) .. controls (310,168.79) and (311.79,167) .. (314,167) .. controls (316.21,167) and (318,168.79) .. (318,171) .. controls (318,173.21) and (316.21,175) .. (314,175) .. controls (311.79,175) and (310,173.21) .. (310,171) -- cycle ;
				\draw  [fill={rgb, 255:red, 0; green, 0; blue, 0 }  ,fill opacity=1 ] (264,155) .. controls (264,152.79) and (265.79,151) .. (268,151) .. controls (270.21,151) and (272,152.79) .. (272,155) .. controls (272,157.21) and (270.21,159) .. (268,159) .. controls (265.79,159) and (264,157.21) .. (264,155) -- cycle ;
				\draw  [fill={rgb, 255:red, 0; green, 0; blue, 0 }  ,fill opacity=1 ] (265,210) .. controls (265,207.79) and (266.79,206) .. (269,206) .. controls (271.21,206) and (273,207.79) .. (273,210) .. controls (273,212.21) and (271.21,214) .. (269,214) .. controls (266.79,214) and (265,212.21) .. (265,210) -- cycle ;
				\draw  [fill={rgb, 255:red, 0; green, 0; blue, 0 }  ,fill opacity=1 ] (310,97) .. controls (310,94.79) and (311.79,93) .. (314,93) .. controls (316.21,93) and (318,94.79) .. (318,97) .. controls (318,99.21) and (316.21,101) .. (314,101) .. controls (311.79,101) and (310,99.21) .. (310,97) -- cycle ;
				\draw    (314,97) -- (314,171) ;
				\draw    (268,155) -- (314,140) ;
				\draw    (268,155) -- (314,171) ;
				\draw    (268,155) -- (269,210) ;
				\draw  [fill={rgb, 255:red, 0; green, 0; blue, 0 }  ,fill opacity=1 ] (310,210) .. controls (310,207.79) and (311.79,206) .. (314,206) .. controls (316.21,206) and (318,207.79) .. (318,210) .. controls (318,212.21) and (316.21,214) .. (314,214) .. controls (311.79,214) and (310,212.21) .. (310,210) -- cycle ;
				\draw    (314,171) -- (314,210) ;
				\draw  [fill={rgb, 255:red, 0; green, 0; blue, 0 }  ,fill opacity=1 ] (358,164) .. controls (358,161.79) and (359.79,160) .. (362,160) .. controls (364.21,160) and (366,161.79) .. (366,164) .. controls (366,166.21) and (364.21,168) .. (362,168) .. controls (359.79,168) and (358,166.21) .. (358,164) -- cycle ;
				\draw    (314,171) -- (362,190) ;
				\draw    (314,171) -- (362,164) ;
				\draw    (362,164) -- (362,190) ;
				\draw  [fill={rgb, 255:red, 0; green, 0; blue, 0 }  ,fill opacity=1 ] (358,190) .. controls (358,187.79) and (359.79,186) .. (362,186) .. controls (364.21,186) and (366,187.79) .. (366,190) .. controls (366,192.21) and (364.21,194) .. (362,194) .. controls (359.79,194) and (358,192.21) .. (358,190) -- cycle ;
				\draw  [fill={rgb, 255:red, 0; green, 0; blue, 0 }  ,fill opacity=1 ] (345,102) .. controls (345,99.79) and (346.79,98) .. (349,98) .. controls (351.21,98) and (353,99.79) .. (353,102) .. controls (353,104.21) and (351.21,106) .. (349,106) .. controls (346.79,106) and (345,104.21) .. (345,102) -- cycle ;
				\draw  [fill={rgb, 255:red, 0; green, 0; blue, 0 }  ,fill opacity=1 ] (360,129) .. controls (360,126.79) and (361.79,125) .. (364,125) .. controls (366.21,125) and (368,126.79) .. (368,129) .. controls (368,131.21) and (366.21,133) .. (364,133) .. controls (361.79,133) and (360,131.21) .. (360,129) -- cycle ;
				\draw    (314,140) -- (364,129) ;
				\draw    (314,140) -- (349,102) ;
				\draw    (364,129) -- (349,102) ;
				
				\draw (248,145) node [anchor=north west][inner sep=0.75pt]   [align=left] {$\displaystyle b_{1}$};
				\draw (295,169) node [anchor=north west][inner sep=0.75pt]   [align=left] {$\displaystyle b_{2}$};
				\draw (293,123) node [anchor=north west][inner sep=0.75pt]   [align=left] {$\displaystyle b_{3}$};
				\draw (294,85) node [anchor=north west][inner sep=0.75pt]   [align=left] {$\displaystyle s_{1}$};
				\draw (261,216) node [anchor=north west][inner sep=0.75pt]   [align=left] {$\displaystyle y_{11}$};
				\draw (308,216) node [anchor=north west][inner sep=0.75pt]   [align=left] {$\displaystyle y_{21}$};
				\draw (355,196) node [anchor=north west][inner sep=0.75pt]   [align=left] {$\displaystyle y_{22}$};
				\draw (368,154) node [anchor=north west][inner sep=0.75pt]   [align=left] {$\displaystyle y_{23}$};
				\draw (368,122) node [anchor=north west][inner sep=0.75pt]   [align=left] {$\displaystyle y_{31}$};
				\draw (354,86) node [anchor=north west][inner sep=0.75pt]   [align=left] {$\displaystyle y_{32}$};

			\end{tikzpicture}\caption{Graph $H'=G(S)$ for $i=3$}\label{G(S)fig4}}
	\end{floatrow}
\end{figure}

\begin{observation}\label{rem1}
	Let $G$ be a graph and $S\subset V(G)$. Then,
	\begin{enumerate}
		\item $\langle S\rangle$ is a subgraph of $G(S)$.
		\item For any distinct $x,y\in Bord(S)$, $(N^{G(S)}[x]-S)\cap (N^{G(S)}[y]-S)=\emptyset$.
		\item For any $x\in S$, $|N^{G(S)}[x]-S|=|N^G[x]-S|$. 
	\end{enumerate}
\end{observation}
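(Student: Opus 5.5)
Each of the three items follows directly from the step-by-step construction of $G(S)$, so I will track what each pass through Steps 2 and 3 adds. The starting graph is $\langle S\rangle$. At iteration $i$ we take the union with the graph $X$, whose vertex set is $b_i$ together with freshly labelled copies $y_{b_i1},\ldots,y_{b_it_{b_i}}$ of the vertices of $N^G(b_i)-S$.

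\emph{Item 1.} Taking a union with another graph never deletes vertices or edges. Since $H$ starts as $\langle S\rangle$, it follows by induction on $i$ that $\langle S\rangle\subseteq H'$ at every stage, and in particular $\langle S\rangle\subseteq G(S)$.

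\emph{Item 2.} The key point is that the new vertices are the relabelled copies $y_{xj}$. These are chosen so that they share no label with any vertex of $G$, and copies created for different border vertices get different labels. Hence the vertices of $G(S)$ outside $S$ are exactly the $y_{b_ij}$.

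To see which such vertices are adjacent to a given $b_i$, look at the edges each iteration adds. In iteration $j$, every edge of $X$ joins $b_j$ to some $y_{b_j\cdot}$, or joins two of the $y_{b_j\cdot}$. Therefore, in the final graph, the neighbors of $b_i$ outside $S$ are exactly $y_{b_i1},\ldots,y_{b_it_{b_i}}$. This gives
$$N^{G(S)}[b_i]-S=\{y_{b_i1},\ldots,y_{b_it_{b_i}}\}.$$
These sets are pairwise disjoint for distinct $i$ because all the labels are distinct.

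\emph{Item 3.} Two cases arise. If $x\in Bord(S)$, then by the display above, $|N^{G(S)}[x]-S|=t_x=|N^G(x)-S|=|N^G[x]-S|$, where the last equality uses $x\in S$. If $x\in Int(S)$, then $x$ receives no new edges, since each iteration only adds edges at $b_j$ or among the new $y$'s. So both sides are $0$: the left side by this observation, and the right side by the definition of $Int(S)$.

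The only delicate point is the bookkeeping in Item 2, namely that no added edge joins $b_i$ to a copy created for some other $b_j$. I will settle this by listing the edge set of $X$ explicitly, as $\langle N^{\langle (V-S)\cup\{x\}\rangle}[x]\rangle$ after relabelling.
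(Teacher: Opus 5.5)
Your proposal is correct and takes the same route as the paper, which gives no separate proof and simply reads the three items off the construction of $G(S)$. Your bookkeeping is exactly what is needed: unions never delete anything; the labels $y_{xj}$ are indexed by the border vertex, so an outside neighbour shared by two border vertices gets two distinct copies; and iteration $j$ adds edges only at $b_j$ or among its own copies, because $b_i\notin (V-S)\cup\{b_j\}$ for $i\neq j$. The edge listing you promise at the end is therefore already implicit in your argument.
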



%

\begin{proposition}
	For any graph $G$ and a set $S\subset V(G)$, there exists only one attack on $S$ in $G(S)$.
\end{proposition}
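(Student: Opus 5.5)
The plan is to show that in $G(S)$ every vertex of the boundary $\partial^{G(S)}S$ has exactly one neighbor in $S$. That neighbor lies in $Bord(S)$. Since an attack $A:\partial S\to Bord(S)$ must send each $y\in\partial S$ to a neighbor of $y$ in $Bord(S)$, this leaves exactly one choice for $A(y)$ at every $y$. Hence the attack exists and is unique.

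First I will describe $G(S)$ explicitly. By construction, its vertex set is $S$ together with the new vertices $y_{x1},\dots,y_{xt_x}$ for each $x\in Bord(S)$. Its edges are those of $\langle S\rangle$, together with, for each $x$, the edges of the relabelled copy of $\langle N^{\langle (V-S)\cup\{x\}\rangle}[x]\rangle$. The relabelling makes the new vertex sets for distinct $x$ pairwise disjoint and disjoint from $S$. Therefore the only edges of $G(S)$ joining $S$ to a vertex outside $S$ are the edges $xy_{xj}$. Indeed, in the copy attached at $x$, the only vertex of $S$ is $x$ itself, and the other vertices are relabelled copies of vertices of $N^G(x)-S$. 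This is also the content of part 2 of Observation \ref{rem1}.

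Next I will identify the boundary, $\partial^{G(S)}S=\{y_{xj}: x\in Bord(S),\,1\le j\le t_x\}$. Each $y_{xj}$ is adjacent to $x$ and to no other vertex of $S$. I also need $Bord^{G(S)}(S)=Bord^G(S)$. A vertex $x\in S$ has a neighbor outside $S$ in $G(S)$ if and only if $t_x\ge1$. By part 3 of Observation \ref{rem1}, this happens exactly when $N^G[x]-S\neq\emptyset$, that is, when $x\in Bord^G(S)$.

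To conclude, define $A(y_{xj})=x$. This is an attack, because $y_{xj}$ is adjacent to $x\in Bord(S)$. Any attack $A'$ must map $y_{xj}$ to a vertex of $Bord(S)$ adjacent to $y_{xj}$, and $x$ is the only such vertex, so $A'=A$. The main point needing care is the claim that no edges other than $xy_{xj}$ cross from $S$ to the new vertices. This rests on the relabelling being fresh at each iteration, so that a vertex of $G$ adjacent to several border vertices becomes distinct vertices in $G(S)$. If $Bord(S)=\emptyset$, the boundary is empty and the unique attack is the empty function; I will note this degenerate case separately.
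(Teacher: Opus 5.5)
Your proof is correct and follows the same route as the paper. The paper likewise uses part~2 of Observation~\ref{rem1} to get $N^{G(S)}(z)\cap Bord(S)=\{x\}$ for every $z\in N^{G(S)}[x]-S$, which forces $A(z)=x$ for every attack $A$. Your extra checks (existence of the attack, $Bord^{G(S)}(S)=Bord^{G}(S)$, and the empty case) supply details the paper leaves implicit.
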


\begin{proof} Let $x\in Bord(S)$ and $z\in N^{G(S)}[x]-S$ be arbitrary. 
	By (2) of Observation \ref{rem1}, $N^{G(S)}(z)\cap Bord(S)=\{x\}$. Thus, for any attack $A$ on $S$, $A(z)=x$ and hence there exists only one attack on $S$ in $G(S)$.
\end{proof}

\begin{lemma}\label{lemma-1}
	A set $S\subset V$ is an ultra $k$-secure set in $G$ if and only if $S$ is a $k$-secure set in $G(S)$. 
\end{lemma}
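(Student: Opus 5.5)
The plan is to compare the two sides directly: defenses of $S$ in $G$ against defenses of $S$ in $G(S)$, and the single attack on $S$ in $G(S)$ against the extremal attacks in $G$. First I record the structural facts that make this possible. By construction and (1) of Observation \ref{rem1}, $\langle S\rangle$ is the same induced subgraph in both graphs. Every border vertex of $S$ in $G$ is a border vertex in $G(S)$, and conversely. Hence $Bord(S)$, the set $N[Bord(S)]\cap S$, and the adjacencies among these vertices coincide in $G$ and $G(S)$. So a function $D:N[Bord(S)]\cap S\to Bord(S)$ is a defense of $S$ in $G$ if and only if it is a defense of $S$ in $G(S)$. By the preceding Proposition there is exactly one attack $A_0$ on $S$ in $G(S)$. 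From its proof, $A_0^{-1}(x)=N^{G(S)}[x]-S$, so by (3) of Observation \ref{rem1} we get $|A_0^{-1}(x)|=|N^G[x]-S|$ for every $x\in Bord(S)$.

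For the forward direction, suppose $D$ is an ultra $k$-defense of $S$ in $G$ (the case $S=V$ is trivial, so assume $S\subset V$). By Proposition \ref{ultra k defense proposition}, $|D^{-1}(x)|\ge |N^G[x]-S|+k=|A_0^{-1}(x)|+k$ for all $x\in Bord(S)$. Viewed as a defense in $G(S)$, $D$ therefore answers the only attack $A_0$. Hence $S$ is $k$-secure in $G(S)$.

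For the converse, suppose $S$ is $k$-secure in $G(S)$. Then there is a defense $D$ with $|D^{-1}(x)|-|A_0^{-1}(x)|\ge k$, that is, $|D^{-1}(x)|\ge |N^G[x]-S|+k$ for all $x\in Bord(S)$. Regard $D$ as a defense in $G$ and let $A$ be any attack on $S$ in $G$. Each $y\in A^{-1}(x)$ lies in $N^G[x]-S$, so $|A^{-1}(x)|\le|N^G[x]-S|$. This gives $|D^{-1}(x)|-|A^{-1}(x)|\ge k$, so $D$ is an ultra $k$-defense in $G$.

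The main obstacle is the bookkeeping in the first step: checking carefully that $Bord(S)$ and the domain of a defense are unchanged when passing to $G(S)$. This relies on the fact that each border vertex keeps at least one outside neighbour, namely its private copies, and that no vertex of $S$ gains or loses $S$-neighbours. There is also a range issue: the condition $k<\Delta$ might differ between $G$ and $G(S)$. I would handle this by noting that the degrees of vertices of $S$ are preserved, or by simply reading $k$-security in $G(S)$ via the defining inequality.
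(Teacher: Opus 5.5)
Your proposal is correct and follows the paper's proof. In the forward direction, Proposition~\ref{ultra k defense proposition} and (3) of Observation~\ref{rem1} let the ultra $k$-defense answer the unique attack in $G(S)$; in the converse, any attack in $G$ satisfies $|A^{-1}(x)|\le |N^G[x]-S|$, which is the count the unique attack puts on $x$. Your additional checks fill in details the paper leaves implicit when it simply asserts that $D$ is a defense in both graphs: that $Bord(S)$ and $N[Bord(S)]\cap S$ coincide in $G$ and $G(S)$, and that $\Delta(G(S))$ may be smaller than $\Delta(G)$.
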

\begin{proof}
	Suppose $S$ is an ultra $k$-secure set in $G$ with an ultra $k$-defense $D$. Then by Proposition \ref{ultra k defense proposition}, $|D^{-1}(x)|\geq |N^G[x]-S|+k$ for all $x\in Bord(S)$. Let $A_S$ be the unique attack on $S$ in $G(S)$. By (3) of Observation \ref{rem1}, for every $x\in Bord(S)$, $|A_S^{-1}(x)|=|N^{G(S)}[x]-S|=|N^{G}[x]-S|$. Now, $|A_S^{-1}(x)|+k=|N^G[x]-S|+k\leq |D^{-1}(x)|  \text{ for all } x\in Bord(S)$.
	Since $\langle S\rangle$ is a subgraph of $G(S)$, $D$ is a defense of $S$ in $G(S)$ also. Since $A_S$ is the unique attack on $S$ in $G(S)$, $S$ is a $k$-secure set in $G(S)$. 
	
	Conversely, suppose $S$ is a $k$-secure set in $G(S)$ and $A_S$ is the unique attack on $S$ in $G(S)$. Then there exists a defense $D$ of $S$ in $G(S)$ such that $|D^{-1}(x)|-|A_S^{-1}(x)|\geq k$ for all $x\in Bord(S)$. Let $A$ be any attack on $S$ in $G$. Then,
	$$
	|A^{-1}(x)|+k\leq |N^G[x]-S|+k=|A_S^{-1}(x)|+k \leq |D^{-1}(x)|$$  for all $x \in Bord(S)$.
	Further, $D$ is a defense of $S$ in $G$ as well. Therefore $S$ is an ultra $k$-secure set in $G$.
\end{proof}

\begin{lemma}\label{lemma-2}
	A set $S$ is a $k$-secure set in $G(S)$ if and only if  for every $X\subseteq Bord(S)$, $|N^G[X]\cap S|\geq k|X|+\underset{x\in X}{\sum}|N^G[x]-S|$.
\end{lemma}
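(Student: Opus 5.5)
The plan is to apply Theorem \ref{char k secure} in the graph $G(S)$ and then translate each term back to $G$ using Observation \ref{rem1}. By Theorem \ref{char k secure}, applied to $G(S)$, $S$ is a $k$-secure set in $G(S)$ if and only if
$$|N^{G(S)}[X]\cap S|\geq |N^{G(S)}[X]-S|+k|X|$$
for every $X\subseteq Bord^{G(S)}(S)$. So three things must be checked: the border of $S$ is the same in $G$ and in $G(S)$; the two neighbourhood terms translate correctly to $G$; and the hypothesis $k<\Delta(G(S))$ of Theorem \ref{char k secure} holds.

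\textbf{The border.} A vertex $x\in S$ has a neighbour outside $S$ in $G(S)$ exactly when $N^{G(S)}[x]-S\neq\emptyset$. By (3) of Observation \ref{rem1}, this happens exactly when $N^G[x]-S\neq\emptyset$. Hence $Bord(S)$ is the same set in both graphs, and the quantifier ranges over the same family of sets $X$.

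\textbf{The left-hand side.} Every edge of $G(S)$ inside $S$ comes from $\langle S\rangle$, since the added vertices $y_{xj}$ all lie outside $S$. Together with (1) of Observation \ref{rem1}, this gives $N^{G(S)}[X]\cap S=N^G[X]\cap S$.

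\textbf{The right-hand side.} We have $N^{G(S)}[X]-S=\bigcup_{x\in X}(N^{G(S)}[x]-S)$. By (2) of Observation \ref{rem1}, this union is disjoint. By (3), the summand for $x$ has size $|N^G[x]-S|$. Therefore $|N^{G(S)}[X]-S|=\sum_{x\in X}|N^G[x]-S|$. Substituting these three identities into the inequality gives exactly the condition in the statement, in both directions.

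\textbf{The main obstacle: the range of $k$.} Theorem \ref{char k secure} requires $k<\Delta(G(S))$, while we are only given $k<\Delta(G)$, and $G(S)$ may have smaller maximum degree. I would try to sidestep this by noting that the proof of Theorem \ref{char k secure} does not use this bound; it is a Hall-type matching argument on the unique attack. Indeed, the proof of Theorem \ref{char k secure} reduces to a Hall argument: replace each $x\in Bord(S)$ by $|A^{-1}(x)|+k$ copies, where $A$ is the unique attack $A_S$, and match the copies into $N[Bord(S)]\cap S$. Hall's condition for the copies of $X\subseteq Bord(S)$ is precisely the displayed inequality.

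So in the write-up I would either invoke Theorem \ref{char k secure} directly, remarking that its proof holds for any $k\geq 0$, or reprove this Hall-type step for the single attack $A_S$ on $G(S)$. The latter makes the lemma self-contained and avoids the degree restriction entirely.

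In the degenerate case $Bord(S)=\emptyset$, both sides hold vacuously.
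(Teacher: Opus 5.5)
Your proposal is correct and follows the paper's proof. Both apply Theorem \ref{char k secure} in $G(S)$ and translate back to $G$ via the disjointness and sizes of the sets $N^{G(S)}[x]-S$ from Observation \ref{rem1}, together with $N^{G(S)}[X]\cap S=N^G[X]\cap S$. Your extra checks, that $Bord(S)$ is the same in $G$ and $G(S)$ and that the hypothesis $k<\Delta(G(S))$ can be bypassed by a direct Hall argument for the unique attack $A_S$, cover points the paper leaves implicit.
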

\begin{proof}
	Note that $(N^{G(S)}[x]-S)\cap (N^{G(S)}[y]-S)=\emptyset$ for any distinct $x,y\in Bord(S)$. Thus for any $X\subseteq Bord(S)$, $|N^{G(S)}[X]-S|=\underset{x\in X}{\sum}|N^{G(S)}[x]-S|=\underset{x\in X}{\sum}|N^G[x]-S|$. Also $N^{G(S)}[X]\cap S=N^G[X]\cap S$. Then by Theorem \ref{char k secure}, Lemma \ref{lemma-2} holds.
\end{proof}

\noindent Now the proof of Theorem \ref{char ultra k secure} follows directly by Lemma \ref{lemma-1} and Lemma \ref{lemma-2}. \\
 
 \noindent
  The following results provide essential criteria for a set $S\subset V$ to be an ultra $k$-secure set.
\begin{theorem}\label{border}
	A necessary condition for a set $S\subset V$ to be an ultra $k$-secure set is  $|Bord(S)|\leq \frac{1}{k+1}|S|$.
\end{theorem}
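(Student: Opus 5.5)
We apply Theorem \ref{char ultra k secure} with the largest possible choice $X=Bord(S)$. Since $S$ is an ultra $k$-secure set and $S\neq V$, the condition must hold for this $X$:
\[
|N[Bord(S)]\cap S|\;\geq\; k|Bord(S)|+\sum_{x\in Bord(S)}|N[x]-S|.
\]
The left side is at most $|S|$. Each vertex $x\in Bord(S)$ has at least one neighbour outside $S$ by the definition of the border. Hence each term $|N[x]-S|$ is at least $1$, and the sum is at least $|Bord(S)|$.

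Combining these gives $|S|\geq (k+1)|Bord(S)|$, which is the claimed inequality $|Bord(S)|\leq \frac{1}{k+1}|S|$.

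No serious obstacle is expected. The only points to check are that $Bord(S)$ is nonempty and that Theorem \ref{char ultra k secure} applies to proper subsets $S\subset V$. The first holds because $G$ is connected and $S\neq V$. The second holds since the characterization is stated for all $S\subseteq V$.

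Alternatively, one can argue directly from Proposition \ref{ultra k defense proposition}, avoiding the characterization. Given an ultra $k$-defense $D$, the preimages $D^{-1}(x)$ for $x\in Bord(S)$ are pairwise disjoint subsets of $S$, since $D$ is a function. Each has size at least $|N[x]-S|+k\geq k+1$. Summing over $x\in Bord(S)$ gives $|S|\geq (k+1)|Bord(S)|$ again.
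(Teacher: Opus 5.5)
Your proof is correct, and your alternative argument is exactly the paper's proof: sum $|D^{-1}(x)|-|N[x]-S|\geq k$ over $Bord(S)$, then use $\sum_{x\in Bord(S)}|D^{-1}(x)|=|N[Bord(S)]\cap S|\leq |S|$ together with $|N[x]-S|\geq 1$. Your main route gets the same summed inequality by applying Theorem \ref{char ultra k secure} at $X=Bord(S)$, which is valid but uses a heavier tool than needed; also, nonemptiness of $Bord(S)$ is not actually required, since the bound is trivial when $Bord(S)=\emptyset$.
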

\begin{proof}
	Let $S$ be an ultra $k$-secure set. Then there is a defense $D$ of $S$ such that for every $x\in Bord(S)$, $|D^{-1}(x)|-|N[x]-S|\geq k$, which implies $\sum\limits_{x\in Bord(S)}\big(|D^{-1}(x)|-|N[x]-S|\big)\geq k|Bord(S)|$.
	Note that $\sum\limits_{x\in Bord(S)}|D^{-1}(x)|=|N[Bord(S)]\cap S|\leq |S|$. Further, for each $x\in Bord(S)$, $|N[x]-S|\ge 1$, therefore, $\sum\limits_{x\in Bord(S)}|N[x]-S|\geq |Bord(S)|$. 
	By combining all these, $|S|-|Bord(S)|\geq  k|Bord(S)|$ and hence $|Bord(S)|\leq \frac{1}{k+1}|S|$.
\end{proof}
The following corollary is straight forward by the observation that $|S|=|Int(S)|+|Bord(S)|$. 
\begin{corolary}\label{int non empty}
	For $k\geq 1$, if $S$ is an ultra $k$-secure set, then $Int(S)\ne \emptyset$. Further $|Int(S)|\geq k|Bord(S)|$.
\end{corolary}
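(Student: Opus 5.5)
The corollary is a direct consequence of Theorem \ref{border} combined with the decomposition $|S|=|Int(S)|+|Bord(S)|$. I would start by dispatching the trivial case $S=V$: then $Bord(S)=\emptyset$ and $Int(S)=V\neq\emptyset$, so both claims hold. For the rest of the argument, assume $S\subset V$ is a proper ultra $k$-secure set with $k\ge 1$.

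The first step is to show $Bord(S)\neq\emptyset$. Here I use that $G$ is connected, a standing assumption of the paper. Since $S$ is nonempty and proper, some vertex of $S$ has a neighbor outside $S$, and that vertex lies in $Bord(S)$.

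The second step is the inequality. Theorem \ref{border} gives $(k+1)|Bord(S)|\le |S|=|Int(S)|+|Bord(S)|$. Subtracting $|Bord(S)|$ from both sides yields $|Int(S)|\ge k|Bord(S)|$.

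Finally, since $k\ge 1$ and $|Bord(S)|\ge 1$, we get $|Int(S)|\ge k\ge 1$, so $Int(S)\neq\emptyset$.

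The only point needing care is the nonemptiness of $Bord(S)$, which depends on connectedness. Without it, a union of whole components would have empty border, although its interior would still be nonempty. Either way, the second claim holds unconditionally from Theorem \ref{border}.
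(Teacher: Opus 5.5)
Your proof is correct and follows the paper's own argument: apply Theorem \ref{border} and substitute $|S|=|Int(S)|+|Bord(S)|$ to get $|Int(S)|\geq k|Bord(S)|$. As a small simplification, you don't need connectedness for nonemptiness, because if $Bord(S)=\emptyset$ then $Int(S)=S\neq\emptyset$, since ultra $k$-secure sets are nonempty by definition.
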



\begin{corolary}\label{degreeatleastn-k}
	For any graph $G$ of order $n$ and an integer $k$ with $1\leq k\leq \Delta(G)-1$, every vertex of degree at least $n-k$ belongs to the intersection of all ultra $k$-secure sets.
\end{corolary}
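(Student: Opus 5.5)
The plan is to argue by contradiction, using the lower bound on the interior from Corollary \ref{int non empty}. Fix $k$ with $1\leq k\leq \Delta(G)-1$, let $v$ be a vertex with $deg(v)\geq n-k$, and let $S$ be any ultra $k$-secure set. If $S=V$ there is nothing to prove, so suppose $S\subset V$ is proper and, aiming for a contradiction, that $v\notin S$.

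First, I will note that $Bord(S)\neq\emptyset$. Since $G$ is connected (as assumed throughout the paper) and $\emptyset\neq S\subsetneq V$, some vertex of $S$ has a neighbor outside $S$. Corollary \ref{int non empty} then applies with $k\geq 1$ and gives
\[
|Int(S)|\geq k|Bord(S)|\geq k.
\]

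Next, I will use that no vertex of $Int(S)$ has a neighbor outside $S$. Since $v\notin S$, every vertex of $Int(S)$ is a non-neighbor of $v$, and none of them equals $v$. So $v$ has at least $k$ non-neighbors among the other $n-1$ vertices, which gives
\[
deg(v)\leq n-1-k<n-k.
\]
This contradicts $deg(v)\geq n-k$. Hence $v\in S$, and since $S$ was an arbitrary ultra $k$-secure set, $v$ lies in the intersection of all of them.

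The only delicate step is guaranteeing $Bord(S)\neq\emptyset$, so that the corollary yields at least $k$ interior vertices rather than the trivial bound $|Int(S)|\geq 0$. This is exactly where connectedness of $G$ is used. The remaining steps are direct counting.
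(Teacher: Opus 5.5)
Your proof is correct and is essentially the paper's argument. The paper counts the at least $|S|-k+1$ neighbors of $u$ inside $S$ as border vertices and contradicts Theorem \ref{border}; you instead count the at least $k$ interior vertices from Corollary \ref{int non empty} as non-neighbors of $v$, which is the same bound seen from the interior side. Your explicit use of connectedness to get $Bord(S)\neq\emptyset$ is also what the paper tacitly relies on when it asserts $|S|>k-1$.
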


\begin{proof}
	Let $S$ be an ultra $k$-secure set. Suppose $u\in V-S$ with $deg(u)\geq n-k$. Since $u$ is non adjacent to at most $k-1$ vertices other than $u$ and $|S|> k-1$, it follows that  $|N[u]\cap S|\geq |S|-k+1$. Therefore, $|Bord(S)|\ge |S|-k+1$. Now, by Theorem \ref{border}, $|S|\geq (k+1)(|S|-k+1)$, which implies that $|S| < k$, a contradiction.
\end{proof}

\subsection{Properties of ultra $k$-security number}
By the definition of ultra $k$-secure sets, the following proposition is straightforward. 

\begin{proposition}
	For any graph $G$ of order $n$, $k+1\leq s_k^u(G)\leq n$. 
\end{proposition}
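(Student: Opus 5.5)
The upper bound is immediate: by definition, $V$ itself is an ultra $k$-secure set, so $s_k^u(G)\leq n$. All the work is in the lower bound $s_k^u(G)\geq k+1$.

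Let $S$ be an ultra $k$-secure set of minimum cardinality and split into two cases. If $S=V$, then $|S|=n$. Since $k<\Delta(G)\leq n-1$, this gives $n\geq k+2>k+1$.

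Otherwise $S\subsetneq V$. Because $G$ is connected (as assumed throughout, following \cite{brigham2007security}), some vertex of $S$ has a neighbor outside $S$, so $Bord(S)\neq\emptyset$. Theorem \ref{border} then gives $1\leq |Bord(S)|\leq \frac{1}{k+1}|S|$, hence $|S|\geq k+1$. One can also see this directly. Pick $x\in Bord(S)$. By Proposition \ref{ultra k defense proposition}, $|D^{-1}(x)|\geq |N[x]-S|+k\geq k+1$, and $D^{-1}(x)\subseteq S$, so $|S|\geq k+1$.

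The only point needing care is guaranteeing $Bord(S)\neq\emptyset$ for a proper subset $S$. This is exactly where connectedness of $G$ is used. If $G$ were disconnected, a whole component could have empty border and the lower bound could fail, so I would state explicitly that $G$ is connected, as assumed in the introduction.
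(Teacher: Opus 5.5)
Your proof is correct and is essentially the argument the paper has in mind when it calls this proposition straightforward from the definition: $V$ is ultra $k$-secure by definition, and for a proper ultra $k$-secure set $S$, any border vertex $x$ needs $|D^{-1}(x)|\geq |N[x]-S|+k\geq k+1$ defenders, all of which lie in $S$. You are also right that connectedness of $G$ is what guarantees $Bord(S)\neq\emptyset$; the paper leaves this implicit, and without it a small component would be vacuously ultra $k$-secure and the lower bound would fail.
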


By Corollary \ref{degreeatleastn-k}, for any $k\ge 1$ and $n\ge 2$, $s_k^u(K_n)=n$. The next result shows that the lower bound given in the preceding proposition can be achieved. 

For a graph $G$, a cut vertex is a vertex of $G$, the removal of which results in a disconnected or trivial graph. A block of $G$ is a maximal connected subgraph of $G$ with no cut vertex. The block graph $B(G)$ of a graph $G$ is the graph with blocks of $G$ as its vertices and two blocks are adjacent whenever they have a common cut vertex. If a block of $G$ is complete, it is referred as a complete block.  

\begin{theorem} 
	For any graph $G$ of order $n$, $s_k^u(G)= k+1$ if and only if $k=n-1$ or $G$ has a block $B_1$ of $k+1$ vertices and a block $B_2$ of two vertices having a common cut vertex of degree $k+1$ such that $B_1$ is a pendant vertex in $B(G)$, which is adjacent to $B_2$.  
\end{theorem}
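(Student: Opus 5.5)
We argue both directions through the characterization in Theorem~\ref{char ultra k secure} and the necessary conditions in Theorem~\ref{border} and Corollary~\ref{int non empty}. Throughout, $G$ is connected, $1\le k\le \Delta(G)-1$ when $S\neq V$ is considered, and $s_k^u(G)=k+1$ means some set $S$ with $|S|=k+1$ is ultra $k$-secure.

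\emph{Necessity.} Suppose $S$ is ultra $k$-secure with $|S|=k+1$. If $S=V$, then $n=k+1$, so $k=n-1$. Otherwise $G$ is connected and $S\neq V$, so $Bord(S)\neq\emptyset$. Theorem~\ref{border} gives $|Bord(S)|\le \frac{|S|}{k+1}=1$, so $Bord(S)=\{x\}$ for a single vertex $x$, and $|Int(S)|=k$. Apply Theorem~\ref{char ultra k secure} with $X=\{x\}$:
\[
|N[x]\cap S|\ge k+|N[x]-S|\ge k+1 .
\]
Since $|S|=k+1$, equality holds throughout. Hence $N[x]\cap S=S$, so $x$ is adjacent to every other vertex of $S$, and $|N[x]-S|=1$, so $x$ has exactly one neighbour $w\notin S$. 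It follows that $\deg(x)=k+1$.

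Next we use connectivity. Every vertex of $Int(S)$ has all its neighbours inside $S$, so the only edge leaving $S$ is $xw$. Therefore $x$ is a cut vertex separating $S-\{x\}$ from $V-S$ (note $w$ exists), and $xw$ is a bridge, so $\{x,w\}$ spans a block $B_2$ of two vertices.

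The block containing $S$ is the delicate step. All vertices of $S$ lie in a single block $B_1$, because $x$ is adjacent to every other vertex of $S$ and $G-x$ separates $S-\{x\}$ from the rest. Then $V(B_1)\subseteq S$, since any path leaving $S$ must pass through $x$. The block $B_1$ shares only the cut vertex $x$ with other blocks, and $x$ lies in exactly the two blocks $B_1$ and $B_2$ because $\deg(x)=k+1$ with $k$ neighbours in $S$ and one outside. Hence $B_1$ is pendant in $B(G)$ and adjacent to $B_2$. Finally $|V(B_1)|=k+1$, because $\langle S\rangle$ is connected through $x$ and contains no cut vertex separating it internally relative to $G$. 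One has to check that $S$ is not split into several blocks at other cut vertices; I expect this to be the main obstacle. If $\langle S-\{x\}\rangle$ is disconnected, then $x$ is a cut vertex of $\langle S\rangle$ and $S$ splits into several blocks through $x$, so $\deg$ counting must be rechecked. This situation is consistent with the characterization, so we interpret $B_1$ as the union $\langle S\rangle$ in the pendant sense. We would verify against Theorem~\ref{char ultra k secure} whether the statement needs $B_1$ to be exactly $\langle S\rangle$, adjusting the argument via the blocks at $x$ if necessary.

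\emph{Sufficiency.} If $k=n-1$, then $S=V$ has $k+1=n$ vertices and is ultra $k$-secure by definition. Combined with the general lower bound $s_k^u(G)\ge k+1$, this gives $s_k^u(G)=k+1$.

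Otherwise, let $x$ be the common cut vertex and take $S=V(B_1)$. Since $B_1$ is pendant in $B(G)$ and adjacent only to $B_2$, and $\deg(x)=k+1$, the vertex $x$ has exactly one neighbour $w\notin S$ (its neighbour in $B_2$) and $k$ neighbours in $S$. All other vertices of $B_1$ have no neighbours outside $B_1$, since $B_1$ is pendant and its only cut vertex is $x$. Hence $Bord(S)=\{x\}$, and $N[x]\cap S=S$ has size $k+1=k\cdot 1+|N[x]-S|$. By Theorem~\ref{char ultra k secure}, $S$ is ultra $k$-secure. Together with the general lower bound $s_k^u(G)\ge k+1$, this yields $s_k^u(G)=k+1$.
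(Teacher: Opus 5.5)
\textbf{Gap in the necessity direction.} Your outline matches the paper's proof: Theorem~\ref{border} forces $Bord(S)=\{x\}$, and the characterization with $X=\{x\}$ then gives $N[x]\cap S=S$ and $|N[x]-S|=1$. Your sufficiency direction is fine. The gap is the step you flagged, and it cannot be closed, because nothing forces $\langle S-\{x\}\rangle$ to be connected. Three of your claims are therefore unjustified:
\begin{itemize}
\item that all of $S$ lies in one block;
\item that $x$ lies in only two blocks (the degree count says nothing about how the $k$ neighbours of $x$ in $S$ are distributed among blocks);
\item that $|V(B_1)|=k+1$.
\end{itemize}
In fact the necessity direction is false for $k\ge 2$. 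Take $G=K_{1,k+1}$ with centre $x$, and let $S$ consist of $x$ and $k$ of the leaves. Then $\Delta(G)=k+1>k$, $Bord(S)=\{x\}$ and $|N[x]\cap S|=k+1=k\cdot 1+|N[x]-S|$. So $S$ is ultra $k$-secure by Theorem~\ref{char ultra k secure}, and $s_k^u(G)=k+1$ by the general lower bound. Yet $k\neq n-1=k+1$, and every block of $G$ is a $K_2$, so there is no block with $k+1\ge 3$ vertices. The paper's proof makes the same unjustified leap (``Thus $\langle S\rangle$ is a block with $k+1$ vertices''), so this is not a defect peculiar to your write-up.

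\textbf{Why the proposed repair fails, and what is actually proved.} Reading $B_1$ as ``the union $\langle S\rangle$ in the pendant sense'' does not prove the stated theorem; it replaces it with a different statement. Suppose $\langle S-\{x\}\rangle$ has components $C_1,\dots,C_r$ with $r\ge 2$. Then the blocks of $G$ inside $S$ are the sets $\langle C_i\cup\{x\}\rangle$. They are pairwise adjacent in $B(G)$ and each is adjacent to $B_2$, so none is pendant, and their union is not a block.

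What your computation (and the paper's) actually establishes is the following. For $k+1<n$, $s_k^u(G)=k+1$ if and only if there is a vertex $x$ of degree $k+1$ and a neighbour $w$ of $x$ such that no vertex of $N(x)-\{w\}$ has a neighbour outside $N[x]-\{w\}$. In that case $xw$ is a bridge and $S=N[x]-\{w\}$. This agrees with the block formulation exactly when $\langle N(x)-\{w\}\rangle$ is connected, which is automatic for $k=1$. For $k\ge 2$, the theorem needs that extra hypothesis, or a reformulation along these lines, to be true.
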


\begin{proof} If $k=n-1$, then the result is trivial. Let $s_k^u(G)= k+1<n$ and $S$ be a minimum ultra $k$-secure set. Then $Bord(S)\neq\emptyset$. By Theorem \ref{border}, $|Bord(S)|\leq \frac{|S|}{k+1}=1$ and hence $|Bord(S)|=1$. Therefore, $|Int(S)|=k$. Since $S$ is a $k$-secure set, by Theorem \ref{char k secure}, every interior vertex is adjacent to the border vertex of $S$ and $|\partial S|=1$.  Thus $\langle S\rangle$ is a block with $k+1$ vertices and $\langle Bord(S)\cup \partial S\rangle$ is a block with two vertices, satisfying the above conditions.  The converse is straight forward.
\end{proof}

The subsequent results provide sufficient conditions for identifying graphs having ultra $k$-security number equal to their order. The following corollaries are consequences of Corollary \ref{degreeatleastn-k}.
\begin{corolary}
	For any graph $G$ of order $n$ and integer $k\ge 1$, if $\delta(G)\geq n-k$, then $s^u_k(G)=n$.
\end{corolary}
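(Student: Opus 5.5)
The plan is to apply Corollary \ref{degreeatleastn-k} to every vertex of $G$ simultaneously. Suppose $\delta(G)\geq n-k$ with $k\geq 1$. Then every vertex $v\in V$ has $deg(v)\geq \delta(G)\geq n-k$. Corollary \ref{degreeatleastn-k} says that each such vertex lies in the intersection of all ultra $k$-secure sets. Hence every ultra $k$-secure set $S$ contains all of $V$, so $S=V$.

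On the other hand, $V$ itself is an ultra $k$-secure set by definition, since the definition explicitly allows $S=V$. The minimum cardinality of an ultra $k$-secure set is therefore exactly $|V|=n$, which gives $s^u_k(G)=n$.

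The only point needing care is the hypothesis $1\leq k\leq \Delta(G)-1$ in Corollary \ref{degreeatleastn-k}, which is where the range restriction on $k$ in the definition of ultra $k$-secure sets enters.
\begin{itemize}
\item The quantity $s^u_k(G)$ is only meaningful for $k<\Delta(G)$, so I would assume $k\leq \Delta(G)-1$ implicitly, as in the definition.
\item The lower bound $k\geq 1$ is given in the statement.
\end{itemize}
I do not expect any real obstacle: the argument is a one-line application of the earlier corollary. The only check is that the degenerate case $k=n-1$ is consistent. In that case $\Delta(G)\leq n-1$ forces $k<\Delta(G)\leq n-1$, so it does not arise under the standing assumption.
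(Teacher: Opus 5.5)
Your proposal is correct and follows the paper, which states this result as an immediate consequence of Corollary~\ref{degreeatleastn-k}: every vertex has degree at least $n-k$, so every ultra $k$-secure set must be all of $V$, and $V$ is ultra $k$-secure by definition. You are also right that $k\le \Delta(G)-1$ must be assumed implicitly, since otherwise neither $s_k^u(G)$ nor the earlier corollary applies.
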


\begin{corolary}\label{Theorem dn-1}
For any graph $G$ of order $n$, let $d_{n-1}$ denote the number of vertices of degree $n-1$ in $G$. Then for any $k\geq \frac{n-d_{n-1}}{d_{n-1}}$, $s_k^u(G)=n$.
\end{corolary}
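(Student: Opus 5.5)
The plan is to show that no proper nonempty subset $S\subset V$ can be an ultra $k$-secure set when $k\ge \frac{n-d_{n-1}}{d_{n-1}}$. Then $V$ is the only ultra $k$-secure set and $s_k^u(G)=n$. Let $U$ be the set of vertices of degree $n-1$, so $|U|=d_{n-1}$. If $d_{n-1}=0$ the hypothesis is vacuous or meaningless, so we assume $d_{n-1}\ge 1$. We also assume $1\le k\le \Delta(G)-1$, which is the standing range in Corollary~\ref{degreeatleastn-k}.

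The first step is to invoke Corollary~\ref{degreeatleastn-k}. Each $u\in U$ has $deg(u)=n-1\ge n-k$, so $U\subseteq S$ for every ultra $k$-secure set $S$.

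Now suppose $S\ne V$ is ultra $k$-secure. Every $u\in U$ is adjacent to every vertex outside $S$, and $V-S\neq\emptyset$, so $U\subseteq Bord(S)$. Hence $|Bord(S)|\ge d_{n-1}$.

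Theorem~\ref{border} then gives $(k+1)d_{n-1}\le (k+1)|Bord(S)|\le |S|\le n-1$. But the hypothesis $k\ge \frac{n-d_{n-1}}{d_{n-1}}$ rearranges to $(k+1)d_{n-1}\ge n$, which is a contradiction. Therefore $S=V$ is the only ultra $k$-secure set, and $s_k^u(G)=n$.

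The only delicate point is bookkeeping. We must check that $k\ge1$ holds so Corollary~\ref{degreeatleastn-k} applies; this follows since $n-d_{n-1}\ge 1$ whenever $G$ is not complete. If $G$ is complete, the earlier remark $s_k^u(K_n)=n$ covers the case. We must also use the strict inequality $|S|\le n-1$ for a proper subset, which is exactly what closes the gap against $(k+1)d_{n-1}\ge n$.
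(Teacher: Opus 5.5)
Your proof is correct and is essentially the paper's own argument. Corollary~\ref{degreeatleastn-k} forces the $d_{n-1}$ universal vertices into $S$, each of them lies in $Bord(S)$ once $S\neq V$, and Theorem~\ref{border} then gives $n\le (k+1)d_{n-1}\le |S|\le n-1$. Your standing assumption $k\ge 1$, which the paper also needs implicitly through Corollary~\ref{degreeatleastn-k}, is genuinely required. For $G=K_n$ with $n\ge 2$ and $k=0$ the hypothesis holds but $s_0^u(K_n)=n-1$, so your appeal to $s_k^u(K_n)=n$ is valid only for $k\ge 1$.
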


\begin{proof}
	Let $S$ be a minimal ultra $k$-secure set in $G$. By Corollary \ref{degreeatleastn-k}, every vertex of degree $n-1$ belongs to $S$. Assume $S$ is a proper subset of $V$. Then there is some $v\in \partial(S)$.  Since every vertex of degree $n-1$ is adjacent to $v$, $|Bord(S)|\ge d_{n-1}$. Now by Theorem \ref{border}, $|S|\ge (k+1)|Bord(S)|\ge (k+1)d_{n-1} $. Now, using $k\geq \frac{n-d_{n-1}}{d_{n-1}}$, we get $|S|\ge n$, a contradiction. Therefore, if $k\geq \frac{n-d_{n-1}}{d_{n-1}}$ then $s_k^u(G)=n$. 
\end{proof}

\begin{corolary}
	Let $G$ be a graph of order $n$ and $d_{n-1}$ be the number of vertices of degree $n-1$ in $G$. If $d_{n-1}\ge \frac{n}{k+1}$, then $s_k^u(G)= n$ for all $k\ge 1$. 
\end{corolary}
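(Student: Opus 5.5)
This follows from Corollary \ref{Theorem dn-1}: we only need to check that $d_{n-1}\ge \frac{n}{k+1}$ gives $k\ge \frac{n-d_{n-1}}{d_{n-1}}$. Before dividing, we must have $d_{n-1}\ge 1$. This holds because $\frac{n}{k+1}>0$ and $d_{n-1}$ is an integer.

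The comparison is a short rearrangement. From $d_{n-1}\ge \frac{n}{k+1}$ we get $(k+1)d_{n-1}\ge n$, so $k\,d_{n-1}\ge n-d_{n-1}$. Dividing by $d_{n-1}>0$ gives $k\ge \frac{n-d_{n-1}}{d_{n-1}}$. Corollary \ref{Theorem dn-1} then yields $s_k^u(G)=n$.

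The one point needing care is the range of $k$. The parameter is defined for $0\le k<\Delta(G)$, and Corollary \ref{degreeatleastn-k} needs $1\le k\le \Delta(G)-1$. Corollary \ref{Theorem dn-1} depends on it, so "all $k\ge 1$" should be read within this range.

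If one prefers not to rely on Corollary \ref{Theorem dn-1}, the direct argument is just as short. Suppose $S\subsetneq V$ is an ultra $k$-secure set. By Corollary \ref{degreeatleastn-k}, every vertex of degree $n-1$ lies in $S$. Take any $v\in\partial S$; each of these $d_{n-1}$ vertices is adjacent to $v$, so each lies in $Bord(S)$. Theorem \ref{border} then gives
\[
|S|\ge (k+1)|Bord(S)|\ge (k+1)d_{n-1}\ge n,
\]
which contradicts $S\neq V$. Since $V$ itself is ultra $k$-secure, $s_k^u(G)=n$.
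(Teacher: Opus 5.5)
Your proposal is correct and follows the paper's proof. The paper also reduces to Corollary \ref{Theorem dn-1}: it uses the dichotomy $s_k^u(G)=n$ or $s_k^u(G)\ge (k+1)d_{n-1}$ from that corollary's proof and then applies $d_{n-1}\ge \frac{n}{k+1}$, which is your direct argument, while your first route (checking that the hypothesis is equivalent to $k\ge \frac{n-d_{n-1}}{d_{n-1}}$ once $d_{n-1}\ge 1$) is a slightly cleaner way of citing the corollary's statement itself.
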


\begin{proof}
	By Corollary \ref{Theorem dn-1},  it is clear that  $s_k^u(G)=n$ or $s^u_k(G)\geq (k+1)d_{n-1}$.  Using $d_{n-1}\ge \frac{n}{k+1}$, we get $s_k^u(G)= n$ for all $k\ge 1$. 
\end{proof}

The vertex connectivity $\kappa(G)$ of a graph $G$ is the minimum number of vertices to be removed from $G$ to get a disconnected or trivial graph. The next result gives a relation between $\kappa(G)$ and $s_k^u(G)$.
\begin{theorem}
	For any graph $G$ of order $n$ and integer $k\ge 1$, if $ \kappa(G)\geq \frac{n}{k+1}$, then $s^u_k(G)=n$.
\end{theorem}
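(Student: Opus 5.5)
Suppose, towards a contradiction, that $s_k^u(G)<n$, and let $S$ be an ultra $k$-secure set with $S\neq V$. The plan is to play two facts off against each other. Since $\kappa(G)$ is large, $Bord(S)$ must be large. On the other hand, Theorem \ref{border} forces $Bord(S)$ to be small relative to $|S|$.

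First I show that $Bord(S)$ separates $Int(S)$ from $V-S$. By Corollary \ref{int non empty} (using $k\ge 1$), $Int(S)\neq\emptyset$. Since $S\neq V$, the set $V-S$ is also nonempty. By definition, no interior vertex has a neighbor outside $S$. Hence every path from a vertex of $Int(S)$ to a vertex of $V-S$ passes through $Bord(S)$, so removing $Bord(S)$ disconnects $G$. Therefore $|Bord(S)|\ge \kappa(G)\ge \frac{n}{k+1}$. If $G$ is complete, this step needs separate care, because no separating set exists there. In that case, though, Corollary \ref{degreeatleastn-k} puts every vertex in every ultra $k$-secure set, so $S=V$ anyway. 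The same argument also covers the convention that removal may leave a trivial graph.

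Next I combine this with Theorem \ref{border}, which gives $|S|\ge (k+1)|Bord(S)|\ge (k+1)\kappa(G)\ge n$. This contradicts $|S|<n$, since $S$ is a proper subset of $V$. Hence every ultra $k$-secure set equals $V$, and $s_k^u(G)=n$.

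The main obstacle is justifying that $Bord(S)$ is a genuine vertex cut. This relies on both sides of the cut being nonempty, and the interior side is exactly what Corollary \ref{int non empty} supplies for $k\ge 1$. The complete-graph edge case also has to be handled, which is where Corollary \ref{degreeatleastn-k} comes in. Once these are in place, the inequality chain is immediate.
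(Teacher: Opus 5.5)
Your proof is correct and is essentially the paper's argument: Corollary \ref{int non empty} gives $Int(S)\neq\emptyset$, which makes $Bord(S)$ a vertex cut, so $\kappa(G)\le |Bord(S)|\le \frac{|S|}{k+1}$ by Theorem \ref{border}, contradicting $|S|<n$. The separate complete-graph discussion is harmless but unnecessary, since a vertex of $Int(S)$ and a vertex of $V-S$ are nonadjacent, so $G$ cannot be complete in that situation.
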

\begin{proof}
	Let $S$ be a minimum ultra $k$-secure set. Assume $S$ is a proper subset of $V$. Then by Corollary \ref{int non empty}, $Int(S)\ne \emptyset$, therefore $\langle V-Bord(S)\rangle$ is disconnected. Thus, $\kappa\leq |Bord(S)|$. Now by Theorem \ref{border}, it follows that $\kappa\leq \frac{s^u_k(G)}{k+1}$,  which contradicts the hypothesis of the theorem. Thus $S=V$.
\end{proof}

\subsection{Variation of $s_k^u(G)$ on addition/removal of edges}\label{Variation of $s_k^u(G)$ on addition/removal of edges}

In this subsection, we explore how the addition or removal of edges impacts the values of $s_k^u(G)$. By the definition of ultra $k$-secure sets its clear that $s_2^u(P_3)=2$.  The graph $C_3$ can be obtained from $P_3$ by adding an edge between the end vertices, resulting in $s_2^u(C_3)=3$. imilarly, straightforward calculations show that   $s_2^u(P_4)=2$ and $s_2^u(C_4)=4$. Thus, although it seems that adding edges between non-adjacent pairs of vertices in $G$ leads to an increase in $s_k^u(G)$. However, this is not the case.

 Consider the graphs $G$ and $G'$ represented by the Figure \ref{figure addition of edge decreases s_u^k(G)}, where $G'$ is formed by adding an edge between two non-adjacent vertices of $G$. It can be observed that the sets $\{v_1, v_2, v_3, v_4\}$ and $\{v_5, v_6, v_7\}$ are the minimum ultra 2-secure sets in $G$ and $G'$ respectively. Therefore, $s_2^u(G)=4$ and $s_2^u(G')=3$.

 For any integers $n>k\geq 1$, consider the graph $G$ formed by adding a bridge between a end vertex of $P_{k+1}$ and any vertex of $K_n$. Let $p_1, p_2, \ldots, p_{k+1}$ be the vertices of $P_{k+1}$. Let $G'$ be the graph obtained by adding the edges to $G$ so that the induced subgraph  $\langle\{p_1, p_2, \ldots, p_{k+1}\}\rangle$ of $G'$ is isomorphic to $K_{k+1}$. Then $s_k^u(G)=n$ and $s_k^u(G')=k+1$, which leads to the next theorem.
 
\begin{theorem}\label{theorem addition of edge decreases s_u^k(G)}
For any integers $n> k\geq 1$, there exist graphs $G$ and $G'$ such that
\begin{enumerate}
	\item $G'$ is obtained by adding $\frac{k(k-1)}{2}$ edges to $G$.
	\item $s_k^u(G)=n$ and $s_k^u(G')=k+1$.
\end{enumerate}
\end{theorem}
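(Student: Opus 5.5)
The plan is to use the construction described just before the statement, with the bridge joining the end vertex $p_1$ of the path $p_1p_2\cdots p_{k+1}$ to a vertex $v$ of $K_n$; the choice of end is immaterial up to relabelling. Part (1) is a count. $\langle\{p_1,\ldots,p_{k+1}\}\rangle$ has $k$ edges in $G$ and $\binom{k+1}{2}$ edges in $G'$, so $G'$ is obtained by adding $\binom{k+1}{2}-k=\frac{k(k-1)}{2}$ edges.

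For part (2), I first handle $G'$. Put $S=\{p_1,\ldots,p_{k+1}\}$. Then $Bord(S)=\{p_1\}$ and $N[p_1]-S=\{v\}$. The only choice is $X=\{p_1\}$, for which $|N[X]\cap S|=k+1\ge k+1$, so Theorem \ref{char ultra k secure} shows that $S$ is ultra $k$-secure. Combined with the general lower bound $s_k^u\ge k+1$, this gives $s_k^u(G')=k+1$.

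For $G$, the upper bound uses $S=V(K_n)$. Here $Bord(S)=\{v\}$, $N[v]-S=\{p_1\}$ and $|N[v]\cap S|=n\ge k+1$, so Theorem \ref{char ultra k secure} applies and $s_k^u(G)\le n$. For the lower bound, let $S$ be an ultra $k$-secure set with $|S|<n$. By Proposition \ref{min is connected} we may assume $\langle S\rangle$ is connected. I then split into two cases.

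\emph{Case (a): $S\subseteq\{p_1,\ldots,p_{k+1}\}$.} Since $S\ne V$, it has a border vertex. Every path vertex has degree at most $2$, so Remark \ref{degree of border vertex} rules this out once $k\ge2$. For $k=1$ the construction is degenerate: $P_2$ is already $K_2$, so $G=G'$ and no edges are added. This case must be excluded, or treated separately, and I expect the intended statement to need $k\ge2$.

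\emph{Case (b): $S$ meets $V(K_n)$.} Let $m=|S\cap V(K_n)|$, so $1\le m<n$. Every vertex of $S\cap V(K_n)$ is adjacent to a vertex of $K_n$ outside $S$, so it lies in $Bord(S)$ with $|N[x]-S|\ge n-m$. Apply Theorem \ref{char ultra k secure} with $X=S\cap V(K_n)$. This gives
\[|S|\ \ge\ |N[X]\cap S|\ \ge\ km+m(n-m),\]
while connectivity of $\langle S\rangle$ gives $|S|\le m+(k+1)$. For $m\ge2$, or for $n-m\ge2$, the right-hand side of the displayed bound already exceeds $m+k+1$, which is a contradiction.

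The main obstacle is the boundary subcase $m=1$, $n=2$. There the crude bound gives only $k+1$ against $k+2$, so I will sharpen the count. If $S$ contains the whole path then $p_1,\ldots,p_{k+1}$ include no border vertex other than $v$, and $|N[v]\cap S|=2<k+2$ for $k\ge1$. If $S$ misses some path vertex, then the last path vertex in $S$ is a border vertex of degree at most $2<k+1$ when $k\ge2$, contradicting Remark \ref{degree of border vertex}. I would also double-check the subcases in which $S$ contains $v$ together with part of the path, using Corollary \ref{int non empty} to require $|Int(S)|\ge k|Bord(S)|$. With these cases closed, $s_k^u(G)=n$.
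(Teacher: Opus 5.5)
Your construction is the paper's, and your edge count, your proof that $s_k^u(G')=k+1$, your upper bound $s_k^u(G)\le n$ via $V(K_n)$, and your Case (a) are all correct. The paper's own argument is only the assertion that this construction works. Your remark about $k=1$ is correct and the paper misses it. For $k=1$ no edges are added, so $G'=G$, and the statement would force $n=s_1^u(G)=s_1^u(G')=2$. Concretely, $\{p_1,p_2\}$ is ultra $1$-secure in $G$, so $s_1^u(G)=2<n$ whenever $n\ge 3$. The theorem is therefore false for $k=1$, $n\ge 3$. It holds for $k\ge 2$, and trivially for $(n,k)=(2,1)$, where $G=P_4$.

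The gap is in Case (b): as written it does not close. Your claim that $km+m(n-m)>m+k+1$ whenever $n-m\ge 2$ fails at $m=1$. There the two sides are $k+n-1$ and $k+2$, which tie at $n=3$, and $(n,k)=(3,2)$ is a legitimate case. So the subcase $m=1$, $n=3$ is left open. Conversely, the subcase $m=1$, $n=2$ that you single out cannot occur once $k\ge 2$, since $n>k$; under your standing assumption $|S|<n$ it is vacuous anyway. Your count there is also off: $|N[v]-S|=n-1=1$, so the requirement is $|N[v]\cap S|\ge k+1$, not $k+2$. The closing ``I would also double-check'' leaves the argument unfinished.

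The repair is one line: do not bound $|N[X]\cap S|$ by $|S|$. The only vertex outside $V(K_n)$ adjacent to $K_n$ is $p_1$, so $N[X]\cap S\subseteq X\cup\{p_1\}$. Theorem~\ref{char ultra k secure} then gives $m+1\ge m(k+n-m)\ge 3m$ for $k\ge 2$, which is impossible for every $1\le m\le n-1$. Alternatively, combine $|S|\le n-1$ with $m(n-m)\ge n-1$, which holds because $m(n-m)-(n-1)=(m-1)(n-m-1)\ge 0$; this gives $|S|\ge km+m(n-m)\ge n$. Either way, every proper ultra $k$-secure set contains $V(K_n)$, connectivity is not needed, and $s_k^u(G)=n$ for $k\ge 2$.
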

\begin{figure}[H]
\tikzset{every picture/.style={line width=0.75pt}} 

\begin{tikzpicture}[x=0.75pt,y=0.75pt,yscale=-1,xscale=1]
	
	\draw  [fill={rgb, 255:red, 0; green, 0; blue, 0 }  ,fill opacity=1 ] (140,115) .. controls (140,112.24) and (142.24,110) .. (145,110) .. controls (147.76,110) and (150,112.24) .. (150,115) .. controls (150,117.76) and (147.76,120) .. (145,120) .. controls (142.24,120) and (140,117.76) .. (140,115) -- cycle ;
	\draw  [fill={rgb, 255:red, 0; green, 0; blue, 0 }  ,fill opacity=1 ] (140,165) .. controls (140,162.24) and (142.24,160) .. (145,160) .. controls (147.76,160) and (150,162.24) .. (150,165) .. controls (150,167.76) and (147.76,170) .. (145,170) .. controls (142.24,170) and (140,167.76) .. (140,165) -- cycle ;
	\draw  [fill={rgb, 255:red, 0; green, 0; blue, 0 }  ,fill opacity=1 ] (200,115) .. controls (200,112.24) and (202.24,110) .. (205,110) .. controls (207.76,110) and (210,112.24) .. (210,115) .. controls (210,117.76) and (207.76,120) .. (205,120) .. controls (202.24,120) and (200,117.76) .. (200,115) -- cycle ;
	\draw  [fill={rgb, 255:red, 0; green, 0; blue, 0 }  ,fill opacity=1 ] (200,165) .. controls (200,162.24) and (202.24,160) .. (205,160) .. controls (207.76,160) and (210,162.24) .. (210,165) .. controls (210,167.76) and (207.76,170) .. (205,170) .. controls (202.24,170) and (200,167.76) .. (200,165) -- cycle ;
	\draw  [fill={rgb, 255:red, 0; green, 0; blue, 0 }  ,fill opacity=1 ] (200,221) .. controls (200,218.24) and (202.24,216) .. (205,216) .. controls (207.76,216) and (210,218.24) .. (210,221) .. controls (210,223.76) and (207.76,226) .. (205,226) .. controls (202.24,226) and (200,223.76) .. (200,221) -- cycle ;
	\draw  [fill={rgb, 255:red, 0; green, 0; blue, 0 }  ,fill opacity=1 ] (200,193) .. controls (200,190.24) and (202.24,188) .. (205,188) .. controls (207.76,188) and (210,190.24) .. (210,193) .. controls (210,195.76) and (207.76,198) .. (205,198) .. controls (202.24,198) and (200,195.76) .. (200,193) -- cycle ;
	\draw    (145,115) -- (205,115) ;
	\draw    (205,165) -- (145,115) ;
	\draw    (205,115) -- (145,165) ;
	\draw    (145,165) -- (205,165) ;
	\draw    (205,221) -- (205,115) ;
	\draw  [fill={rgb, 255:red, 0; green, 0; blue, 0 }  ,fill opacity=1 ] (169,206) .. controls (169,203.24) and (171.24,201) .. (174,201) .. controls (176.76,201) and (179,203.24) .. (179,206) .. controls (179,208.76) and (176.76,211) .. (174,211) .. controls (171.24,211) and (169,208.76) .. (169,206) -- cycle ;
	\draw    (145,115) -- (145,165) ;
	\draw    (205,221) -- (174,206) ;
	\draw  [fill={rgb, 255:red, 0; green, 0; blue, 0 }  ,fill opacity=1 ] (333,115) .. controls (333,112.24) and (335.24,110) .. (338,110) .. controls (340.76,110) and (343,112.24) .. (343,115) .. controls (343,117.76) and (340.76,120) .. (338,120) .. controls (335.24,120) and (333,117.76) .. (333,115) -- cycle ;
	\draw  [fill={rgb, 255:red, 0; green, 0; blue, 0 }  ,fill opacity=1 ] (333,165) .. controls (333,162.24) and (335.24,160) .. (338,160) .. controls (340.76,160) and (343,162.24) .. (343,165) .. controls (343,167.76) and (340.76,170) .. (338,170) .. controls (335.24,170) and (333,167.76) .. (333,165) -- cycle ;
	\draw  [fill={rgb, 255:red, 0; green, 0; blue, 0 }  ,fill opacity=1 ] (393,115) .. controls (393,112.24) and (395.24,110) .. (398,110) .. controls (400.76,110) and (403,112.24) .. (403,115) .. controls (403,117.76) and (400.76,120) .. (398,120) .. controls (395.24,120) and (393,117.76) .. (393,115) -- cycle ;
	\draw  [fill={rgb, 255:red, 0; green, 0; blue, 0 }  ,fill opacity=1 ] (393,165) .. controls (393,162.24) and (395.24,160) .. (398,160) .. controls (400.76,160) and (403,162.24) .. (403,165) .. controls (403,167.76) and (400.76,170) .. (398,170) .. controls (395.24,170) and (393,167.76) .. (393,165) -- cycle ;
	\draw  [fill={rgb, 255:red, 0; green, 0; blue, 0 }  ,fill opacity=1 ] (393,221) .. controls (393,218.24) and (395.24,216) .. (398,216) .. controls (400.76,216) and (403,218.24) .. (403,221) .. controls (403,223.76) and (400.76,226) .. (398,226) .. controls (395.24,226) and (393,223.76) .. (393,221) -- cycle ;
	\draw  [fill={rgb, 255:red, 0; green, 0; blue, 0 }  ,fill opacity=1 ] (393,193) .. controls (393,190.24) and (395.24,188) .. (398,188) .. controls (400.76,188) and (403,190.24) .. (403,193) .. controls (403,195.76) and (400.76,198) .. (398,198) .. controls (395.24,198) and (393,195.76) .. (393,193) -- cycle ;
	\draw    (338,115) -- (398,115) ;
	\draw    (398,165) -- (338,115) ;
	\draw    (398,115) -- (338,165) ;
	\draw    (338,165) -- (398,165) ;
	\draw    (398,221) -- (398,115) ;
	\draw  [fill={rgb, 255:red, 0; green, 0; blue, 0 }  ,fill opacity=1 ] (362,206) .. controls (362,203.24) and (364.24,201) .. (367,201) .. controls (369.76,201) and (372,203.24) .. (372,206) .. controls (372,208.76) and (369.76,211) .. (367,211) .. controls (364.24,211) and (362,208.76) .. (362,206) -- cycle ;
	\draw    (338,115) -- (338,165) ;
	\draw    (398,221) -- (367,206) ;
	\draw    (398,193) -- (367,206) ;
	
	\draw (122,102) node [anchor=north west][inner sep=0.75pt]   [align=left] {$\displaystyle v_{2}$};
	\draw (121,156) node [anchor=north west][inner sep=0.75pt]   [align=left] {$\displaystyle v_{1}$};
	\draw (208,102) node [anchor=north west][inner sep=0.75pt]   [align=left] {$\displaystyle v_{3}$};
	\draw (211,156) node [anchor=north west][inner sep=0.75pt]   [align=left] {$\displaystyle v_{4}$};
	\draw (212,188) node [anchor=north west][inner sep=0.75pt]   [align=left] {$\displaystyle v_{5}$};
	\draw (210,218) node [anchor=north west][inner sep=0.75pt]   [align=left] {$\displaystyle v_{6}$};
	\draw (163,213) node [anchor=north west][inner sep=0.75pt]   [align=left] {$\displaystyle v_{7}$};
	\draw (315,102) node [anchor=north west][inner sep=0.75pt]   [align=left] {$\displaystyle v_{2}$};
	\draw (314,156) node [anchor=north west][inner sep=0.75pt]   [align=left] {$\displaystyle v_{1}$};
	\draw (401,102) node [anchor=north west][inner sep=0.75pt]   [align=left] {$\displaystyle v_{3}$};
	\draw (404,156) node [anchor=north west][inner sep=0.75pt]   [align=left] {$\displaystyle v_{4}$};
	\draw (405,188) node [anchor=north west][inner sep=0.75pt]   [align=left] {$\displaystyle v_{5}$};
	\draw (403,218) node [anchor=north west][inner sep=0.75pt]   [align=left] {$\displaystyle v_{6}$};
	\draw (356,213) node [anchor=north west][inner sep=0.75pt]   [align=left] {$\displaystyle v_{7}$};
	\draw (122,149) node [anchor=north west][inner sep=0.75pt]   [align=left] {$ $};
	\draw (150,248) node [anchor=north west][inner sep=0.75pt]   [align=left] {{Graph \Large $G$}};
	\draw (345,249) node [anchor=north west][inner sep=0.75pt]   [align=left] {{Graph \Large $G'$}};

\end{tikzpicture}\caption{Example of a graph where the addition of an edge decreases $s_u^k(G)$.}\label{figure addition of edge decreases s_u^k(G)}
\end{figure}

For integers $n$ and $k$ with $n>2k\geq 2$, let $G$ be the graph formed by adding a bridge between a vertex of $K_n$ and a vertex of $K_{k+1}$. Let $v_n$ be the vertex of  $K_n$ incident with the bridge. Now we obtain a new graph $G'$ by adding edges between $v_n$ and all the vertices of $K_{k+1}$ that are non-adjacent to $v_n$ in $G$. Then, $s_k^u(G)=k+1$ and $s_k^u(G')=n$, hence the following result. 

\begin{theorem}\label{theorem addition of edge increases s_u^k(G)}
	For any integers $n>2k\geq 2$, there exist graphs $G$ and $G'$ such that 
	\begin{enumerate}
		\item $G'$ is obtained by adding $k$ edges to $G$.
		\item $s_k^u(G)=k+1$ and $k^u(G')=n$.
	\end{enumerate}
\end{theorem}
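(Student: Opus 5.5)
The plan is to verify the explicit construction described just before the statement. Take $K_n$ with vertices $v_1,\ldots,v_n$ and $K_{k+1}$ with vertices $u_0,u_1,\ldots,u_k$, and join them by the bridge $v_nu_0$; call the result $G$. Obtain $G'$ by adding the $k$ edges $v_nu_1,\ldots,v_nu_k$, which gives item~1 directly. Both graphs have order $N=n+k+1$. I will verify each candidate set through the characterization of Theorem \ref{char ultra k secure}, and get the lower bounds from Theorem \ref{border}, Corollary \ref{degreeatleastn-k} and the general bound $s_k^u\ge k+1$.

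\emph{The graph $G$.} Let $S=V(K_{k+1})$. Then $Bord(S)=\{u_0\}$ and $N[u_0]-S=\{v_n\}$. The only nonempty $X\subseteq Bord(S)$ is $X=\{u_0\}$, and for it
$$|N[u_0]\cap S|=k+1=k\cdot 1+1.$$
So Theorem \ref{char ultra k secure} shows that $S$ is ultra $k$-secure. Together with $s_k^u(G)\ge k+1$ this gives $s_k^u(G)=k+1$.

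\emph{The graph $G'$: upper bound.} Let $S=V(K_n)$. Then $Bord(S)=\{v_n\}$ and $N[v_n]-S=V(K_{k+1})$. The condition of Theorem \ref{char ultra k secure} for $X=\{v_n\}$ reads $n\ge k+(k+1)$, which holds because $n>2k$. Hence $s_k^u(G')\le n$.

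\emph{The graph $G'$: lower bound.} In $G'$ we have $\deg(v_n)=n+k=N-1\ge N-k$. By Corollary \ref{degreeatleastn-k}, $v_n$ lies in every ultra $k$-secure set $S$. Suppose $|S|<n$. Then $S$ is proper and does not contain all of $V(K_n)$. Write $a=|S\cap\{v_1,\ldots,v_{n-1}\}|$ and $b=|S\cap V(K_{k+1})|$. Every vertex of $S$ lying in a clique that is not entirely contained in $S$ is a border vertex. Since $v_n$ is adjacent to everything, $v_n\in Bord(S)$.

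\begin{itemize}
\item If $V(K_{k+1})\not\subseteq S$ as well, then every vertex of $S$ is a border vertex. Thus $Int(S)=\emptyset$, which contradicts Corollary \ref{int non empty}.
\item Otherwise $b=k+1$ and $Bord(S)=\{v_n\}\cup(S\cap\{v_1,\ldots,v_{n-1}\})$. Theorem \ref{border} gives $a+k+2\ge (k+1)(a+1)$, that is, $ak\le 1$.
\begin{itemize}
\item If $a=1$ (which forces $k=1$), apply Theorem \ref{char ultra k secure} to $X=\{w\}$, where $w$ is the unique vertex of $S\cap\{v_1,\ldots,v_{n-1}\}$. This gives $2\ge 1+(n-2)$, i.e.\ $n\le 3$. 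Then $|S|=4>n$, contradicting $|S|<n$.
\item If $a=0$, then $S=\{v_n\}\cup V(K_{k+1})$. Apply Theorem \ref{char ultra k secure} to $X=\{v_n\}$, whose outside neighbours are $v_1,\ldots,v_{n-1}$. This gives $k+2\ge k+(n-1)$, i.e.\ $n\le 3$. Combined with $n>2k\ge 2$, this forces $k=1$ and $n=3$, so $|S|=3=n$. Again this contradicts $|S|<n$.
\end{itemize}
\end{itemize}
Therefore $s_k^u(G')=n$.

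The main obstacle is this lower bound for $G'$. The border-size inequality of Theorem \ref{border} alone is not enough; it must be combined with the per-vertex condition of Theorem \ref{char ultra k secure} applied to single border vertices. The small boundary cases ($k=1$, $n\le 3$) need to be checked explicitly against the hypothesis $n>2k$.
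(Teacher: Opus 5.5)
Your proposal is correct and uses exactly the construction the paper gives: $K_n$ and $K_{k+1}$ joined by a bridge at $v_n$, then $v_n$ joined to the other $k$ vertices of $K_{k+1}$. The paper only asserts $s_k^u(G)=k+1$ and $s_k^u(G')=n$ without argument, so your verification via Theorem~\ref{char ultra k secure}, Theorem~\ref{border} and Corollaries~\ref{degreeatleastn-k} and~\ref{int non empty}, including the boundary case $k=1$, $n=3$, supplies the details the paper omits.
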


\noindent By Theorem \ref{theorem addition of edge decreases s_u^k(G)} and Theorem \ref{theorem addition of edge increases s_u^k(G)}, one may conclude that the addition/removal may increase/decrease the values of $s_k^u(G)$.

\section{Graphs with given ultra $k$-security number}
 In \cite{petrie2014f,petrie2012security}, it is shown that for each positive integer $s$, there exists a graph $G$ of order $n> 2s$  such that $s(G)=s^u(G)=s$. In this section, for given positive integers $n, k, s$ with $n\ge s \ge k+1\geq 2$, the construction of graphs of order $n$ having given ultra $k$-security number $s$ is demonstrated. If $s=n$ then $G=K_n$ is the desired graph for all $k\ge 1$. 
\begin{theorem}\label{graphconstructionforultraksecureset}
	For any positive integers $n,k,s$ with $n> s\ge k+1\ge 2$, there exists a graph $G$ of order $n$ with $s_k(G)=s_k^u(G)=s$.
\end{theorem}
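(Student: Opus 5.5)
The plan is to build $G$ from a clique $K_s$ on vertex set $Q$ by hanging the remaining $n-s$ vertices off a single vertex $v\in Q$, so that $Q$ is an ultra $k$-secure set, and then to show that no $k$-secure set of $G$ has fewer than $s$ vertices. Since every ultra $k$-secure set is $k$-secure, $s_k(G)\le s_k^u(G)$. It then suffices to prove $s_k^u(G)\le s$ and $s_k(G)\ge s$.

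\textbf{Construction.} The attachment depends on $n-s$, $k$ and $s$.
\begin{itemize}
\item If $n-s=1$, attach one pendant vertex to $v$.
\item If $n-s\ge 2$ and either $k\ge 2$ or $(k,s)=(1,2)$, attach a path $P_{n-s}$ by joining one of its end vertices to $v$.
\item If $n-s\ge 2$, $k=1$ and $s\ge 3$, join the remaining vertices into a cycle $C_{n-s+1}$ that passes through $v$.
\end{itemize}

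\textbf{Upper bound.} In every case $Bord(Q)=\{v\}$, and $v$ has $t$ neighbors outside $Q$, where $t=1$ in the pendant and path cases and $t=2$ in the cycle case. By Theorem \ref{char ultra k secure}, the only set to check is $X=\{v\}$. The condition reads $s\ge k+t$. This holds because $s\ge k+1$ when $t=1$, and because $s\ge 3=k+2$ when $t=2$. Hence $s_k^u(G)\le s$.

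\textbf{Lower bound.} Let $S'$ be a proper $k$-secure set, and let $C$ be the vertex set of a component of $\langle S'\rangle$. A component of a $k$-secure set is again $k$-secure, since distinct components have disjoint defenders (the same reasoning as in Proposition \ref{min is connected}). The argument has two steps.
\begin{itemize}
\item \emph{$C$ must meet $Q$.} Suppose $C$ lies entirely among the attached vertices, so $C$ induces a subpath. The subpath has a border vertex of degree at most $2$. For $k\ge 2$ this contradicts Remark \ref{degree of border vertex}. In the cycle case with $k=1$, both ends of the subpath are border vertices. Taking $X=C$ in Theorem \ref{char k secure} then gives $|C|\ge 2+|C|$, which is impossible. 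In the path case with $k=1$ we have $s=2$, and then $|S'|\ge 2=s$ trivially.
\item \emph{If $C$ meets $Q$, then $|S'|\ge s$.} If $Q\subseteq S'$ this is immediate. Otherwise put $X=S'\cap Q$ with $a=|X|\ge 1$. Every vertex of $X$ is adjacent to a vertex of $Q-S'$, so $X\subseteq Bord(S')$. Theorem \ref{char k secure} gives
$$|S'|\ge |N[X]\cap S'|\ge |N[X]-S'|+ka\ge (s-a)+ka\ge s,$$
using $k\ge 1$.
\end{itemize}
Together with the fact that $V$ itself has $n>s$ vertices, this gives $s_k(G)\ge s$, and therefore $s_k(G)=s_k^u(G)=s$.

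The main obstacle is the $k=1$ case. There a short leaf-ended path contains a $1$-secure set of size $2$, namely a leaf together with its neighbor. This is why the construction switches to the cycle when $s\ge 3$, and why it must be checked that the cycle still leaves $v$ few enough outside neighbors for $Q$ to stay ultra $k$-secure.
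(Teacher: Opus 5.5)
Your treatment of $k\ge 2$, and of $k=1$ with $s=2$, is sound and follows the same idea as the paper. The paper hangs a chain of $K_k$'s and a $K_r$ off the clique instead of a path, but in both constructions every vertex off the clique has degree at most $k$. Remark~\ref{degree of border vertex} therefore forbids border vertices there, and your count $|S'|\ge (s-a)+ka$ finishes cleanly. Your first bullet does skip the pendant case with $k=1$, but the pendant has degree $1<k+1$, so the Remark covers it too.

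\textbf{The gap is the cycle case} $k=1$, $s\ge 3$, $n-s\ge 2$. There you take $X=C$ in Theorem~\ref{char k secure}, but the theorem only permits $X\subseteq Bord(C)$. A subpath with three or more vertices has interior vertices, so $X=C$ is not allowed. With the legitimate choice $X=Bord(C)$ (the two end vertices), the condition is $|N[X]\cap C|\ge |\partial C|+2$, which can hold, and the construction really fails.
\begin{itemize}
\item For $(n,k,s)=(7,1,4)$ the cycle is $v,w_1,w_2,w_3,v$. The set $S'=\{w_1,w_2,w_3\}$ has $Bord(S')=\{w_1,w_3\}$ and $\partial S'=\{v\}$. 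The checks are $2\ge 1+1$ for each singleton and $3\ge 1+2$ for $\{w_1,w_3\}$, so $S'$ is $1$-secure and $s_1(G)\le 3<s$. The same happens whenever $s\ge 4$ and $n-s=3$.
\item Whenever $s\ge 5$ and $n-s\ge 4$, the vertices $w_1,\dots,w_4$ even form an ultra $1$-secure set: for $X=\{w_1,w_4\}$ the condition reads $4\ge 2+2$. Hence $s_1^u(G)\le 4<s$; this is just the paper's observation $s_1^u(C_n)=4$.
\end{itemize}
Removing leaves is not enough, because long induced paths carry small $1$-secure sets on their own.

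\textbf{The repair}, as in the paper, is to attach cliques when $k=1$.
\begin{itemize}
\item If $n-s<s$, let $v$ be a cut vertex shared by $Q$ and a clique on $n-s+1$ vertices. Then $v$ has $n-s$ outside neighbours, and your upper-bound condition becomes $s\ge 1+(n-s)$, which holds.
\item If $n-s\ge s$, join $Q$ by a bridge to a disjoint $K_{n-s}$.
\end{itemize}
Your second bullet never used the shape of the attachment, so it survives unchanged. Your first bullet now becomes correct:
\begin{itemize}
\item In the cut-vertex case, a component $C$ avoiding $Q$ lies in the other clique minus $v$, and $v\notin S'$. So every vertex of $C$ is a border vertex, $X=C$ is legitimate, and it gives $|C|\ge 1+|C|$, which is impossible.
\item In the bridge case, the same clique count applied to $K_{n-s}$ gives $|S'|\ge n-s\ge s$.
\end{itemize}
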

\begin{proof} 
	Consider the following cases.
	\begin{enumerate}
		\item [\textbf{Case-1:}] $k\geq 2$.\\ 
	Let $q,r$ be the quotient and reminder obtained when $n-s$ is divided by $k$. For $1\leq i\leq q$, let $V_i=\{v_{ij}:1\leq j\leq k\}$. Let $U=\{u_i:1\leq i\leq s\}$. Let $V=U\cup\underset{i=1}{\overset{q}{\cup}}V_i$. Consider the following subcases.\\
	\textbf{Subcase-1: } $r=0$. 
	\\Consider the graph $G$ having vertex set $V$ and edge set $E$, determined by the following. 
	\begin{enumerate}
		\item[(i)] $\langle U\rangle$ is a block of $G$ isomorphic to $K_s$ and for each $i$ with $1\leq i\leq q$, $\langle V_i\rangle$ are blocks of $G$ isomorphic to $K_k$. 
		\item[(ii)] The edges  $u_sv_{11}$ and $v_{ik}v_{(i+1)1}$, $1\leq i< q$ are the bridges of $G$.
	\end{enumerate}
	
	For the graph $G$ considered above, it is clear that $|U|=s\ge k+1$, $|\partial (U)|=1$ and $|Bord(U)|=1$. Hence by Theorem \ref{border}, $U$ is a minimal ultra $k$-secure set. Now we prove that $U$ is the only minimal ultra $k$-secure set in $G$. Let $S$ be any minimal ultra $k$-secure set in $G$ other than $U$.  Clearly $S$ is a proper subset of $V$ (otherwise $S$ contains an ultra $k$-secure set $U$, which contradicts the minimality of $S$). Suppose $V-U\not\subseteq S$. Then there exists an $x\in (V-U)\cap Bord(S)$. By Remark \ref{degree of border vertex}, it follows that $deg(x)\geq k+1$, which contradicts the fact that every vertex in $V-U$ is of degree at most $k$. Thus $V-U\subseteq S$. Then by Remark \ref{degree of border vertex}, $V-U$ is a subset of $Int(S)$ ($\because$ $deg(v)\leq k$ for all $v\in V-U$). Thus $Bord(S)\subseteq U$. Since $\langle U\rangle$ is complete, $Bord(S)=S\cap U$ and $\partial S=U-S$. Since $N[U]=U\cup \{v_{11}\}$, it follows that $|N[S\cap U]\cap S|=|S\cap U|+1$. Then by Theorem \ref{char ultra k secure}, 
	\begin{eqnarray}\label{eq-4}
		|S\cap U|+1 = |N[S\cap U]\cap S| &\geq& k|S\cap U|+|U-S||S\cap U| \nonumber\\  &=&|S\cap U|\left(k+|U-S|\right).
	\end{eqnarray}
	Since $k\ge 2$ and $|S\cap U|\ne 0$,  (\ref{eq-4}) holds only when $|U-S|=0$ or equivalently $\partial S=\emptyset$. This implies $U\subset S$, a contradiction to the minimality of $S$. Thus, $U$ is the only minimal ultra $k$-secure set in $G$. Similarly, by using Theorem \ref{char k secure}, $U$ is the only minimal $k$-secure set in $G$. Therefore $s_k(G)=s_k^u(G)=s$.\\
	 \textbf{Subcase-2: } $r\neq 0$.\\
	Let $W=\{w_i:1\leq i\leq r\}$. Consider the graph $H$ with vertex set $V\cup W$ such that $\langle V\rangle$ is the graph $G$ constructed in Subcase-1, $\langle W\rangle$ is a block of $H$ isomorphic to $K_r$ and $v_{lk}w_1$ is the bridge of $H$ in addition to the bridges of $G$. The graph construction is illustrated in Figure \ref{constructionultraksecure}. Then as in Case-1, $U$ is the only minimal ultra $k$ secure set in $H$, as well as it is the only minimum $k$-secure set in $H$. Hence $s_k(H)=s_k^u(H)=s$.
	\item [\textbf{Case-2: }]$k=1$.\\
	Let $n-s< s$. Consider the graph $G$ of order $n$ having exactly two complete blocks with order $s$ and $n-s+1$ containing a common cut vertex. Since $n-s+1\leq s$, the complete block of order $s$ forms a minimal 1-secure set/minimal ultra 1-secure set. 
	By similar argument as in Subcase-1 of Case-1, the vertex set of complete block with $s$ vertices is the only minimal 1-secure set/minimal ultra 1-secure set in $G$ and hence $s_1(G)=s_1^u(G)=s$.  
	
	Suppose $s\leq n-s$. Consider the graph $G$ of order $n$ having two disjoint complete blocks of order $s$ and $n-s$ with a bridge between them as illustrated in Figure \ref{graphsforcase2}. Similar to the previous argument, the vertex sets of two complete blocks of $G$ are the only minimal 1-secure sets/minimal ultra 1-secure sets in $G$. Since $s\leq n-s$, $s_1(G)=s_1^u(G)=s$. 
\end{enumerate}
\end{proof}

\begin{figure}[H]
	\centering

	\tikzset{every picture/.style={line width=0.75pt}} 
	
	\begin{tikzpicture}[x=0.75pt,y=0.75pt,yscale=-1,xscale=1]
		
		\draw   (32,119) .. controls (32,91.94) and (53.94,70) .. (81,70) .. controls (108.06,70) and (130,91.94) .. (130,119) .. controls (130,146.06) and (108.06,168) .. (81,168) .. controls (53.94,168) and (32,146.06) .. (32,119) -- cycle ;
		\draw   (164,120) .. controls (164,103.43) and (177.43,90) .. (194,90) .. controls (210.57,90) and (224,103.43) .. (224,120) .. controls (224,136.57) and (210.57,150) .. (194,150) .. controls (177.43,150) and (164,136.57) .. (164,120) -- cycle ;
		\draw   (251,120) .. controls (251,103.43) and (264.43,90) .. (281,90) .. controls (297.57,90) and (311,103.43) .. (311,120) .. controls (311,136.57) and (297.57,150) .. (281,150) .. controls (264.43,150) and (251,136.57) .. (251,120) -- cycle ;
		\draw   (393,120) .. controls (393,103.43) and (406.43,90) .. (423,90) .. controls (439.57,90) and (453,103.43) .. (453,120) .. controls (453,136.57) and (439.57,150) .. (423,150) .. controls (406.43,150) and (393,136.57) .. (393,120) -- cycle ;
		\draw   (485,120) .. controls (485,106.19) and (496.19,95) .. (510,95) .. controls (523.81,95) and (535,106.19) .. (535,120) .. controls (535,133.81) and (523.81,145) .. (510,145) .. controls (496.19,145) and (485,133.81) .. (485,120) -- cycle ;
		\draw  [fill={rgb, 255:red, 0; green, 0; blue, 0 }  ,fill opacity=1 ] (119,120.5) .. controls (119,119.12) and (120.12,118) .. (121.5,118) .. controls (122.88,118) and (124,119.12) .. (124,120.5) .. controls (124,121.88) and (122.88,123) .. (121.5,123) .. controls (120.12,123) and (119,121.88) .. (119,120.5) -- cycle ;
		\draw  [fill={rgb, 255:red, 0; green, 0; blue, 0 }  ,fill opacity=1 ] (169,120.5) .. controls (169,119.12) and (170.12,118) .. (171.5,118) .. controls (172.88,118) and (174,119.12) .. (174,120.5) .. controls (174,121.88) and (172.88,123) .. (171.5,123) .. controls (170.12,123) and (169,121.88) .. (169,120.5) -- cycle ;
		\draw  [fill={rgb, 255:red, 0; green, 0; blue, 0 }  ,fill opacity=1 ] (214,120.5) .. controls (214,119.12) and (215.12,118) .. (216.5,118) .. controls (217.88,118) and (219,119.12) .. (219,120.5) .. controls (219,121.88) and (217.88,123) .. (216.5,123) .. controls (215.12,123) and (214,121.88) .. (214,120.5) -- cycle ;
		\draw  [fill={rgb, 255:red, 0; green, 0; blue, 0 }  ,fill opacity=1 ] (258,120.5) .. controls (258,119.12) and (259.12,118) .. (260.5,118) .. controls (261.88,118) and (263,119.12) .. (263,120.5) .. controls (263,121.88) and (261.88,123) .. (260.5,123) .. controls (259.12,123) and (258,121.88) .. (258,120.5) -- cycle ;
		\draw  [fill={rgb, 255:red, 0; green, 0; blue, 0 }  ,fill opacity=1 ] (300,120.5) .. controls (300,119.12) and (301.12,118) .. (302.5,118) .. controls (303.88,118) and (305,119.12) .. (305,120.5) .. controls (305,121.88) and (303.88,123) .. (302.5,123) .. controls (301.12,123) and (300,121.88) .. (300,120.5) -- cycle ;
		\draw  [fill={rgb, 255:red, 0; green, 0; blue, 0 }  ,fill opacity=1 ] (398,120.5) .. controls (398,119.12) and (399.12,118) .. (400.5,118) .. controls (401.88,118) and (403,119.12) .. (403,120.5) .. controls (403,121.88) and (401.88,123) .. (400.5,123) .. controls (399.12,123) and (398,121.88) .. (398,120.5) -- cycle ;
		\draw  [fill={rgb, 255:red, 0; green, 0; blue, 0 }  ,fill opacity=1 ] (443,120.5) .. controls (443,119.12) and (444.12,118) .. (445.5,118) .. controls (446.88,118) and (448,119.12) .. (448,120.5) .. controls (448,121.88) and (446.88,123) .. (445.5,123) .. controls (444.12,123) and (443,121.88) .. (443,120.5) -- cycle ;
		\draw  [fill={rgb, 255:red, 0; green, 0; blue, 0 }  ,fill opacity=1 ] (490,120.5) .. controls (490,119.12) and (491.12,118) .. (492.5,118) .. controls (493.88,118) and (495,119.12) .. (495,120.5) .. controls (495,121.88) and (493.88,123) .. (492.5,123) .. controls (491.12,123) and (490,121.88) .. (490,120.5) -- cycle ;
		\draw    (121.5,120.5) -- (171.5,120.5) ;
		\draw    (216.5,120.5) -- (260.5,120.5) ;
		\draw    (302.5,120.5) -- (324,121) ;
		\draw    (381,121) -- (400.5,120.5) ;
		\draw    (445.5,120.5) -- (492.5,120.5) ;
		\draw    (171,168) .. controls (160,205) and (308,159) .. (305,195) ;
		\draw    (305,193) .. controls (307,162) and (451,204) .. (447,168) ;
		\draw  [fill={rgb, 255:red, 0; green, 0; blue, 0 }  ,fill opacity=1 ] (332,120.5) .. controls (332,121.33) and (332.67,122) .. (333.5,122) .. controls (334.33,122) and (335,121.33) .. (335,120.5) .. controls (335,119.67) and (334.33,119) .. (333.5,119) .. controls (332.67,119) and (332,119.67) .. (332,120.5) -- cycle ;
		\draw  [fill={rgb, 255:red, 0; green, 0; blue, 0 }  ,fill opacity=1 ] (349,120.5) .. controls (349,121.33) and (349.67,122) .. (350.5,122) .. controls (351.33,122) and (352,121.33) .. (352,120.5) .. controls (352,119.67) and (351.33,119) .. (350.5,119) .. controls (349.67,119) and (349,119.67) .. (349,120.5) -- cycle ;
		\draw  [fill={rgb, 255:red, 0; green, 0; blue, 0 }  ,fill opacity=1 ] (364,120.5) .. controls (364,121.33) and (364.67,122) .. (365.5,122) .. controls (366.33,122) and (367,121.33) .. (367,120.5) .. controls (367,119.67) and (366.33,119) .. (365.5,119) .. controls (364.67,119) and (364,119.67) .. (364,120.5) -- cycle ;
		
		\draw (71,49) node [anchor=north west][inner sep=0.75pt]   [align=left] {$\displaystyle K_{s}$};
		\draw (186,70) node [anchor=north west][inner sep=0.75pt]   [align=left] {$\displaystyle K_{k}$};
		\draw (273,69) node [anchor=north west][inner sep=0.75pt]   [align=left] {$\displaystyle K_{k}$};
		\draw (414,68) node [anchor=north west][inner sep=0.75pt]   [align=left] {$\displaystyle K_{k}$};
		\draw (495,69) node [anchor=north west][inner sep=0.75pt]   [align=left] {$\displaystyle K_r$};
		\draw (106,106) node [anchor=north west][inner sep=0.75pt]   [align=left] {$\displaystyle u_{s}$};
		\draw (169,106) node [anchor=north west][inner sep=0.75pt]   [align=left] {$\displaystyle v_{11}$};
		\draw (197,122) node [anchor=north west][inner sep=0.75pt]   [align=left] {$\displaystyle v_{1k}$};
		\draw (400,106) node [anchor=north west][inner sep=0.75pt]   [align=left] {$\displaystyle v_{l1}$};
		\draw (427,122) node [anchor=north west][inner sep=0.75pt]   [align=left] {$\displaystyle v_{lk}$};
		\draw (258,106) node [anchor=north west][inner sep=0.75pt]   [align=left] {$\displaystyle v_{21}$};
		\draw (282,122) node [anchor=north west][inner sep=0.75pt]   [align=left] {$\displaystyle v_{2k}$};
		\draw (492,106) node [anchor=north west][inner sep=0.75pt]   [align=left] {$\displaystyle w_{1}$};
		\draw (288,197) node [anchor=north west][inner sep=0.75pt]   [align=left] {$q$ \ times};

	\end{tikzpicture}\caption{Graph construction for Case-1 of Theorem \ref{graphconstructionforultraksecureset}.}\label{constructionultraksecure}
\end{figure}

\begin{figure}[H]

	\tikzset{every picture/.style={line width=0.75pt}} 
	
	\begin{tikzpicture}[x=0.6pt,y=0.6pt,yscale=-1,xscale=1]
		
		\draw   (73,119) .. controls (73,87.52) and (98.52,62) .. (130,62) .. controls (161.48,62) and (187,87.52) .. (187,119) .. controls (187,150.48) and (161.48,176) .. (130,176) .. controls (98.52,176) and (73,150.48) .. (73,119) -- cycle ;
		\draw   (163,119) .. controls (163,87.52) and (188.52,62) .. (220,62) .. controls (251.48,62) and (277,87.52) .. (277,119) .. controls (277,150.48) and (251.48,176) .. (220,176) .. controls (188.52,176) and (163,150.48) .. (163,119) -- cycle ;
		\draw  [fill={rgb, 255:red, 0; green, 0; blue, 0 }  ,fill opacity=1 ] (170.5,121.75) .. controls (170.5,119.4) and (172.4,117.5) .. (174.75,117.5) .. controls (177.1,117.5) and (179,119.4) .. (179,121.75) .. controls (179,124.1) and (177.1,126) .. (174.75,126) .. controls (172.4,126) and (170.5,124.1) .. (170.5,121.75) -- cycle ;
		\draw   (359,121) .. controls (359,89.52) and (384.52,64) .. (416,64) .. controls (447.48,64) and (473,89.52) .. (473,121) .. controls (473,152.48) and (447.48,178) .. (416,178) .. controls (384.52,178) and (359,152.48) .. (359,121) -- cycle ;
		\draw   (520,122) .. controls (520,90.52) and (545.52,65) .. (577,65) .. controls (608.48,65) and (634,90.52) .. (634,122) .. controls (634,153.48) and (608.48,179) .. (577,179) .. controls (545.52,179) and (520,153.48) .. (520,122) -- cycle ;
		\draw    (459.5,123) -- (530.5,123) ;
		\draw  [fill={rgb, 255:red, 0; green, 0; blue, 0 }  ,fill opacity=1 ] (455.25,123) .. controls (455.25,120.65) and (457.15,118.75) .. (459.5,118.75) .. controls (461.85,118.75) and (463.75,120.65) .. (463.75,123) .. controls (463.75,125.35) and (461.85,127.25) .. (459.5,127.25) .. controls (457.15,127.25) and (455.25,125.35) .. (455.25,123) -- cycle ;
		\draw  [fill={rgb, 255:red, 0; green, 0; blue, 0 }  ,fill opacity=1 ] (526.25,123) .. controls (526.25,120.65) and (528.15,118.75) .. (530.5,118.75) .. controls (532.85,118.75) and (534.75,120.65) .. (534.75,123) .. controls (534.75,125.35) and (532.85,127.25) .. (530.5,127.25) .. controls (528.15,127.25) and (526.25,125.35) .. (526.25,123) -- cycle ;
		
		\draw (122,131) node [anchor=north west][inner sep=0.75pt]   [align=left] {$\displaystyle K_{s}$};
		\draw (204,129) node [anchor=north west][inner sep=0.75pt]   [align=left] {$\displaystyle K_{n-s+1}$};
		\draw (248,227) node [anchor=north west][inner sep=0.75pt]   [align=left] {};
		\draw (406,122) node [anchor=north west][inner sep=0.75pt]   [align=left] {$\displaystyle K_{s}$};
		\draw (554,127) node [anchor=north west][inner sep=0.75pt]   [align=left] {$\displaystyle K_{n-s+1}$};

	\end{tikzpicture}\caption{Graph construction for Case-2 of Theorem \ref{graphconstructionforultraksecureset}}\label{graphsforcase2}
\end{figure}

\section{Ultra $k$-security numbers of some classes of graphs}
In this section, the exact values/bounds on ultra $k$-security numbers are determined for paths, cycles, complete multipartite graphs and grid like graphs. 

\begin{proposition}
	\begin{enumerate}
		\item For any $n\geq 2$, $s^u_1(P_n)=2$.
		\item For any $n\geq 3$, $s^u_1(C_n)=\begin{cases}
			3& \text{if }n=3\\
			4& \text{if }n\geq 4.
		\end{cases}$
	\end{enumerate}
\end{proposition}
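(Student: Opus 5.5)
We treat the path and the cycle separately, using the lower bound $s_k^u(G)\ge k+1$ (here $k=1$) together with Theorem \ref{char ultra k secure} to certify upper bounds by explicit sets. For the lower bounds we use Theorem \ref{border} and Proposition \ref{min is connected}.

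\emph{Paths.} The lower bound $s^u_1(P_n)\ge 2$ is immediate from the proposition $k+1\le s_k^u(G)$. For $n=2$ we take $S=V$, which is ultra $1$-secure by definition. For $n\ge 3$, label the path $v_1v_2\cdots v_n$ and take $S=\{v_1,v_2\}$. Then $Bord(S)=\{v_2\}$ and $Int(S)=\{v_1\}$. The only nonempty $X\subseteq Bord(S)$ is $\{v_2\}$, and
\[
|N[v_2]\cap S|=2\ \ge\ 1\cdot 1+|N[v_2]-S|=1+1 .
\]
So Theorem \ref{char ultra k secure} shows $S$ is ultra $1$-secure, and $s^u_1(P_n)=2$.

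\emph{Cycles.} For $n=3$, $C_3=K_3$ has $\delta=2\ge n-k$, so the corollary on $\delta(G)\ge n-k$ gives $s_1^u(C_3)=3$. Now let $n\ge 4$ and let $S$ be a minimum ultra $1$-secure set with $S\ne V$. By Proposition \ref{min is connected} and minimality, $\langle S\rangle$ is connected, so $S$ is a set of consecutive vertices $v_1,\dots,v_m$ with $m\le n-1$. If $m=1$, the bound $|S|\ge 2$ already excludes it. If $m\ge 2$, both ends $v_1,v_m$ have a neighbour outside $S$ while inner vertices do not, so $|Bord(S)|=2$. This holds even when $m=n-1$, where both ends see the same outside vertex. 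Theorem \ref{border} then gives $|S|\ge (k+1)|Bord(S)|=4$. Hence $s^u_1(C_n)\ge 4$, and for $n=4$ this forces $s_1^u(C_4)=4$ (attained by $S=V$).

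For $n\ge 5$ we exhibit $S=\{v_1,v_2,v_3,v_4\}$ and verify the condition of Theorem \ref{char ultra k secure} for $X=\{v_1\}$, $\{v_4\}$ and $\{v_1,v_4\}$:
\[
|N[v_1]\cap S|=2\ge 1+1,\qquad |N[v_4]\cap S|=2\ge 1+1,\qquad |N[\{v_1,v_4\}]\cap S|=4\ge 2+(1+1).
\]
The third inequality uses the sum $\sum_{x\in X}|N[x]-S|$, so it is unaffected by $v_1$ and $v_4$ sharing the outside vertex when $n=5$.

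The only delicate point is the lower bound for cycles: we must justify that a minimum ultra $1$-secure set may be taken connected (Proposition \ref{min is connected}) and count its border correctly in the edge case $m=n-1$. The rest is routine verification.
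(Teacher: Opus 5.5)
Your proof is correct. The paper states this proposition without proof. Your argument is the natural one built from the paper's own tools:
\begin{itemize}
\item the explicit sets $\{v_1,v_2\}$ and $\{v_1,v_2,v_3,v_4\}$ are certified via Theorem \ref{char ultra k secure};
\item $C_3$ is handled by the $\delta(G)\ge n-k$ corollary;
\item the cycle lower bound comes from Proposition \ref{min is connected}, which reduces a minimum proper set to an arc with exactly two border vertices, combined with Theorem \ref{border}.
\end{itemize}
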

	 For $l\geq 2$, a graph $G=(V,E)$ is a complete $l$-partite graph, denoted by $K_{m_1,m_2,\ldots, m_l}$ if its vertex set can be partitioned into $V_1, V_2,\ldots, V_l$ of cardinalities $m_1,m_2,\ldots, m_l$ respectively such that for every $i$, none of the vertices in $V_i$ are adjacent to each other and every vertex in $V_i$ is adjacent to all vertices of $V_j$ for $j\neq i$.

\begin{theorem}
	For $l\geq 2$, let $m_1\leq m_2\leq\ldots\leq m_l$ be positive integers with $m_l>1$ and $\sum\limits_{i=1}^{l}m_i=n$. Then, $$s_k^u(K_{m_1,m_2,\ldots,m_l})=\begin{cases}
		n- \left\lfloor\dfrac{m_l-(k-1)(n-m_l)}{1+n-m_l}\right\rfloor& \text{  if  }1\leq k\leq \lfloor\frac{m_l-1}{n-m_l}\rfloor,m_l\geq \lceil\frac{n+1}{2}\rceil\\ \\
		\hspace{3cm}n&\text{  otherwise}.
	\end{cases}$$
\end{theorem}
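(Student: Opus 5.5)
The plan is to classify all proper ultra $k$-secure sets $S$ of $G=K_{m_1,\ldots,m_l}$ using Theorem \ref{char ultra k secure} together with Corollary \ref{int non empty}, and then to maximize $|V-S|$. Throughout, a vertex of $V_i$ is adjacent to exactly the vertices outside $V_i$.

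First I would show that $V-S$ lies inside a single part.

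\emph{$S$ meets at least two parts.} Suppose $S\subseteq V_i$ for some $i$. Then $S$ is independent, so any $x\in Bord(S)$ satisfies $|N[x]\cap S|=1$. Taking $X=\{x\}$ in Theorem \ref{char ultra k secure} would require $1\ge k+|N[x]-S|\ge k+1$, which is impossible.

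\emph{$V-S$ lies in one part.} Suppose $V-S$ meets two distinct parts. Every vertex of $S$ is non-adjacent only to vertices of its own part, so every vertex of $S$ has a neighbour outside $S$. Hence $Int(S)=\emptyset$, contradicting Corollary \ref{int non empty} (here $k\ge 1$).

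So $V-S\subseteq V_j$ for some $j$. Write $t=|V_j-S|=n-|S|\ge 1$. Then $Bord(S)=V-V_j$, which has $n-m_j$ vertices, and $Int(S)=S\cap V_j$, which has $m_j-t$ vertices. For every $x\in Bord(S)$ we have $|N[x]-S|=t$.

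Next I would reduce the condition of Theorem \ref{char ultra k secure} to a single inequality. Take $\emptyset\ne X\subseteq Bord(S)$. The set $N[X]\cap S$ contains $X\cup (S\cap V_j)$.
\begin{itemize}
\item If $X$ meets at least two parts, then $N[X]\cap S=S$.
\item If $X$ lies in one part $V_i$ with $i\neq j$, then $N[X]\cap S=X\cup (S-V_i)$, which is at least $|X|+(m_j-t)$.
\end{itemize}
In either case $|N[X]\cap S|\ge m_j-t+|X|$. So the condition $|N[X]\cap S|\ge (k+t)|X|$ is implied by
\[
m_j-t\ge (k+t-1)|X| .
\]
The right-hand side is largest when $X=Bord(S)$. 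For that $X$ we have $|N[X]\cap S|=|S|$ exactly, so this case is also necessary. Hence $S$ is ultra $k$-secure if and only if $n-t\ge (k+t)(n-m_j)$, that is,
\[
t(1+n-m_j)\le m_j-(k-1)(n-m_j).
\]
(This step needs $l\ge 2$ and the degenerate cases to be handled with care, for instance when $X$ lies in a single part that has only one vertex.)

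Finally I would optimize over $t$ and $j$. For fixed $j$, the largest admissible $t$ is $\left\lfloor \frac{m_j-(k-1)(n-m_j)}{1+n-m_j}\right\rfloor$. This quantity is nondecreasing in $m_j$, so $j=l$ is the best choice. A proper ultra $k$-secure set exists exactly when this floor is at least $1$, i.e. when $m_l-1\ge k(n-m_l)$, i.e. when $k\le\lfloor\frac{m_l-1}{n-m_l}\rfloor$. Since $k\ge1$, this already forces $m_l-1\ge n-m_l$, i.e. $m_l\ge\lceil\frac{n+1}{2}\rceil$. When this holds, we get $s_k^u=n-t_{\max}$. We should also check $t_{\max}\le m_l-1$, so that $Int(S)\neq\emptyset$; this follows from $m_l-t\ge k(n-m_l)>0$. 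When it fails, only $S=V$ works, giving $n$.

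I expect the main obstacle to be the subset reduction in the second step. One has to verify carefully that no $X\subsetneq Bord(S)$, in particular an $X$ contained in a single part, gives a stronger constraint than $X=Bord(S)$, and handle the boundary cases where some part has size one.
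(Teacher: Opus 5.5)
Your proof is correct and follows essentially the paper's route: you confine $V-S$ to a single part $V_j$ using $Int(S)\neq\emptyset$ (Corollary \ref{int non empty}), then apply Theorem \ref{char ultra k secure} with $X=Bord(S)$, where $N[X]\cap S=S$, to obtain $t(1+n-m_j)\le m_j-(k-1)(n-m_j)$. Two points go beyond the paper. First, your containment $N[X]\cap S\supseteq X\cup(S\cap V_j)$ for every $X\subseteq Bord(S)$ already settles sufficiency, including parts of size one, so the obstacle you anticipated does not arise; the paper only asserts this direction. Second, your monotonicity argument for choosing $j=l$ is a valid substitute for the paper's remark that a part with $m_j\ge\lceil\frac{n+1}{2}\rceil$ must be the largest.
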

\begin{proof}
	Let $V_1,V_2,\ldots, V_l$ be the partite sets of $K_{m_1,m_2,\ldots, m_l}$ with $|V_i|=m_i$ for each $i$, $1\leq i\leq l$. Let $n=m_1+m_2+\ldots+m_l$. Suppose $S\subset V$ is a minimum ultra $k$-secure set. If there exists $i$ with $1\leq i\leq l$ such that $S\cap V_i=\emptyset$, then $S=Bord(S)$, which is a contradiction. Further if there exist distinct $i,j$ with $1\leq i,j\leq l$ such that $S-V_i,S-V_j\neq\emptyset$, then $S=Bord(S)$, which is a contradiction. Since $S\subset V$, there exists a unique $t$ with $1\leq t\leq l$ such that $S-V_t,S\cap V_t\ne\emptyset$ and for $1\leq i\leq l$ with $i\neq t$, $V_i\subseteq S$. Then $Int(S)=S\cap V_t$, $Bord(S)=\underset{i\neq t}{\cup}V_i$. By Theorem \ref{char ultra k secure},
	\begin{eqnarray*}
		|S\cap V_t|+|\underset{i\neq t}{\cup}V_i|&=&|N[Bord(S)]\cap S|\\&\geq& k|Bord(S)|+\underset{z\in Bord(S)}{\sum}|N[z]-S|\\&=&k(n-m_t)+|V_t-S|(n-m_t).
	\end{eqnarray*}
	Then $|S\cap V_t|\geq (k-1)(n-m_t)+|V_t-S|(n-m_t)$ and hence $$|V_t|=|V_t\cap S|+|V_t-S|\geq (k-1)(n-m_t)+(1+n-m_t)|V_t-S|.$$ Thus $|V_t-S|\leq \left\lfloor\dfrac{m_t-(k-1)(n-m_t)}{1+n-m_t}\right\rfloor$. Hence $|S|\geq n- \left\lfloor\dfrac{m_t-(k-1)(n-m_t)}{1+n-m_t}\right\rfloor$. Since $S\subset V$, $\left\lfloor\dfrac{m_t-(k-1)(n-m_t)}{1+n-m_t}\right\rfloor\geq 1$ which is true if and only if $m_t-k(n-m_t)\geq 1$. Thus $k\leq \left\lfloor\frac{m_t-1}{n-m_t}\right\rfloor$. Further $\lfloor\frac{m_t-1}{n-m_t}\rfloor\geq 1$ if and only if $m_t\geq \lceil\frac{n+1}{2}\rceil$. Thus $m_t=max\{m_i:1\leq i\leq l\}=m_l$. 
	
	On the other hand, let $m_l\geq \lceil\frac{n+1}{2}\rceil$ and $1\leq k\leq \left\lfloor\frac{m_t-1}{n-m_t}\right\rfloor$. Then the set $S$ containing $V_1, V_2,\ldots, V_{l-1}$ and any $m_l- \left\lfloor\dfrac{m_l-(k-1)(n-m_l)}{1+n-m_l}\right\rfloor$ vertices of $V_l$ is an ultra $k$-secure set. This completes the proof.
	\end{proof}

	 The Cartesian product $G_1\square G_2$ of graphs $G_1$ and $G_2$ is a graph with vertex set $V(G_1)\times V(G_2)$ and edge set $\{(u_i,v_j)(u_l, v_k): (u_i=u_l \text{ and } v_jv_k\in E(G_2)) \text{ or } (v_j=v_k \text{ and } u_iu_l\in E(G_1))\}$. In $G_1\square G_2$, the sets $Row(i)=\{(u_i, v_j): 1\leq j\leq |V(G_2)|\}$, $Col(j)=\{(u_i, v_j): 1\leq i\leq |V(G_1)|\}$ are called the $i^{th}$ row and $j^{th}$ column respectively. Note that each row in $G_1\square G_2$ induces $G_2$ whereas each column induces $G_1$. The Cartesian products of paths and cycles are called grid-like graphs. 
	 
	 The ultra security number of grid-like graphs are determined in \cite{petrie2012security}. Here we obtain the ultra $k$-security numbers of grid-like graphs for $1\leq k\leq 3$. Since $\Delta(G)=4$ for grid like graphs, $s_u^k(G)$ for these graphs are defined only for $k \le 3$. It is easy to observe that $s_1^u(P_2\square P_2)=4$ and $s_1^u(P_2\square P_n)=4$, for all $n\ge 3$. Throughout the discussion, $v_{ij}$ denotes the vertex corresponding to $Row(i)$ and $Col(j)$.
	
	\begin{theorem}\label{ultra 1 security pmpn}
		For integers $n\geq m\geq 3$, $s_1^u(P_m\square P_n)=2m$.
	\end{theorem}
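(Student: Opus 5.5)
For the upper bound, take $S=Col(1)\cup Col(2)$, which has $2m$ vertices. Here $Bord(S)=Col(2)$ and $Int(S)=Col(1)$, and each $v_{i2}$ has exactly one neighbor outside $S$, namely $v_{i3}$. The defense $D(v_{i1})=D(v_{i2})=v_{i2}$ gives $|D^{-1}(v_{i2})|=2=|N[v_{i2}]-S|+1$ for every $i$, so it is an ultra $1$-defense. Equivalently, by Theorem \ref{char ultra k secure}, every $X\subseteq Col(2)$ satisfies $|N[X]\cap S|\ge 2|X|$. Hence $s_1^u(P_m\square P_n)\le 2m$.

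For the lower bound, let $S$ be a minimum ultra $1$-secure set and suppose $S\ne V$; we must show $|S|\ge 2m$. By Proposition \ref{min is connected} we may take $\langle S\rangle$ connected. By Corollary \ref{int non empty}, $|Int(S)|\ge |Bord(S)|$, so $|S|\ge 2|Bord(S)|$. The key observation is that a row (or column) containing vertices both in and out of $S$ contains two consecutive vertices, one in $S$ and one not, and hence contains a border vertex of $S$. So if every column is mixed, then $|Bord(S)|\ge n\ge m$. Likewise, if every row is mixed, then $|Bord(S)|\ge m$. In both cases $|S|\ge 2m$.

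Otherwise there is a row $R$ and a column $C$, each either entirely inside $S$ or entirely outside $S$. A full-in row and a full-out column intersect, which is impossible, and symmetrically for a full-out row and a full-in column. This leaves two cases.

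\emph{Both entirely inside $S$.} Let $a\ge1$ be the number of full rows and $b\ge1$ the number of full columns. Then $|S|\ge an+bm-ab=a(n-b)+bm$. This already gives at least $2m$ when $b\ge2$, or when $a\ge2$ and $b=1$. The remaining subcase $a=b=1$ needs a finer count. $S$ contains a cross of $m+n-1$ vertices. Every non-full row is mixed and so contributes a border vertex. The cross vertices having neighbors outside $S$ are themselves border vertices. Combining $|S|\ge 2|Bord(S)|$ with these counts should give $|S|\ge 2m$.

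\emph{Both entirely outside $S$.} Connectivity confines $S$ to one of the rectangular pieces cut off by $R$ and $C$. Inside that piece, every row and column meeting $S$ is mixed in the full grid. Say $S$ meets $r$ rows and $c$ columns. Then $|Bord(S)|\ge$ roughly $2(r+c)-4$, counting the two ends of each row segment and each column segment, corrected at the grid boundary. Meanwhile $|Int(S)|\le (r-2)(c-2)$. I will play the inequality $|Int(S)|\ge|Bord(S)|$ against these estimates to force either $r+c$ large, which gives $|S|\ge 2|Bord(S)|\ge 2m$, or a contradiction.

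I expect this last case, together with the tight $a=b=1$ subcase, to be the main obstacle. These are isoperimetric estimates on the grid, and grid-boundary effects make the constants delicate. If direct counting proves too messy, I will instead apply Theorem \ref{char ultra k secure} to carefully chosen subsets $X\subseteq Bord(S)$, such as the border vertices of a single extreme row or column. Each such application gives local inequalities, and these should rule out small configurations.
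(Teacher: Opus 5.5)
Your upper bound and the case where every row or every column is mixed are correct. In the ``both inside'' case, the $a=b=1$ subcase is unfinished but closes easily. If the full column had no border vertex, its neighbouring columns would be full, contradicting $b=1$. The other $n-1$ columns are mixed, so $|Bord(S)|\ge n\ge m$.

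\textbf{The genuine gap is the case where $R$ and $C$ lie entirely outside $S$.} Your planned isoperimetric argument cannot succeed there. The only security information it uses is $|Int(S)|\ge|Bord(S)|$, and that is far too weak. In $P_{100}\square P_{100}$, let $S$ be the $10\times 10$ block on rows and columns $11,\dots,20$. Then $Row(50)$ and $Col(50)$ miss $S$. We have $|Bord(S)|=36=2(r+c)-4$ and $|Int(S)|=64=(r-2)(c-2)$, so every inequality you intend to combine holds. Yet $|S|=100<2m$. No adjustment of boundary constants fixes this, because what excludes such sets is a local condition, not a global count.

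\textbf{The missing idea is the one you list only as a fallback; it is in fact the whole argument (the Claim in the paper's Case 2).} Let $Col(r)$ be the column meeting $S$ that is nearest to $C$, so the adjacent column toward $C$ misses $S$. Put $X=Col(r)\cap S$; then $X\subseteq Bord(S)$.
\begin{itemize}
\item Each $x\in X$ has at most one neighbour in $S$ off $Col(r)$, so $|N[X]\cap S|\le 2|X|$.
\item Each $x\in X$ has its neighbour toward $C$ outside $S$. If $Col(r)\not\subseteq S$, some $x\in X$ also has a vertical neighbour outside $S$. Hence $|X|+\sum_{x\in X}|N[x]-S|\ge 2|X|+1$.
\end{itemize}
Together these contradict Theorem~\ref{char ultra k secure}. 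In the example above, $|N[X]\cap S|=20<22$.

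Therefore $Col(r)\subseteq S$. But $Col(r)$ meets $R$, so this case is impossible. Equivalently, as in the paper, $Col(r)\subseteq Bord(S)$ gives $|Bord(S)|\ge m$.

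With this lemma the row/column quadrant split is unnecessary. The paper simply splits on whether every column meets $S$, and applies the same lemma when some column misses $S$.
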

	\begin{proof}
		Let $S$ be a minimal ultra 1-secure set. By Theorem \ref{min is connected}, $\langle S\rangle$ is connected. Since none of the rows/columns alone form a 1-secure set, $S$ intersects at least 2  columns and 2 rows. By the connectedness of $\langle S\rangle$, it follows that $S$ intersects 2 consecutive columns and 2 consecutive rows. Consider the following cases.
		\begin{itemize}
			\item[\bf Case 1:] $S\cap Col(j)\neq \emptyset$ for all $j$, $1\leq j\leq n$.\\ 
			If every column contains a border vertex of $S$, then $|Bord(S)|\geq n$. Then by Theorem \ref{border}, $|S|\geq 2n\geq 2m$. If $Col(j)\cap Bord(S)=\emptyset$ for some $j$ with $1\leq j\leq n$, then $Col(j)\subseteq Int(S)$ and hence $Col(j-1)\subseteq S$ or $Col(j+1)\subseteq S$. Then $|S|\geq 2m$.
			\item[\bf Case 2:] $S\cap Col(j)=\emptyset$ for some $j$, $1\leq j\leq n$. \\
			Since $\langle S\rangle$ is connected, there exists a smallest integer $r$ such that $Col(j)\cap S=\emptyset$ for all $j>r$ or there exists a largest integer $r$ such that $Col(j)\cap S=\emptyset$ for all $j<r$. Without loss of generality, assume that $r$ is the smallest integer such that $Col(j)\cap S=\emptyset$ for all $j>r$. Then $Col(r)\cap S\neq\emptyset$. Since none of the subset of $Col(1)$ is an ultra 1-secure set, $r\geq 2$. Now  $Col(r+1)\cap S=\emptyset$ implies $Col(r)\cap S\subseteq Bord(S)$.\\ 
			\noindent {\bf Claim:} $Col(r)\cap (V-S)=\emptyset$. \\	 	\noindent
			If $Col(r)\cap (V-S)\neq \emptyset$, then for $X=Col(r)\cap S$, $\underset{x\in X}{\sum}|N[x]-S|\geq |N[X]-S|\geq |X|+1$ and $|N[X]\cap S|\leq 2|X|$. By Theorem \ref{char ultra k secure}, $2|X|\geq |N[X]\cap S|\geq |X|+\underset{x\in X}{\sum}|N[x]-S|\geq 2|X|+1$, which is not possible. Thus claim holds.\\
			By the claim and using $Col(r+1)\cap S=\emptyset$, we get $Col(r)\subseteq Bord(S)$. Hence $|Bord(S)|\geq m$. Then by Theorem \ref{border}, $|S|\geq 2m$.
		\end{itemize}
		
		\noindent By the above cases, $s_1^u(P_m\square P_n)\geq 2m$. Equality holds by the observation that $S_1=Col(1)\cup Col(2)$ is an ultra 1-secure sets.
	\end{proof}
	
	By direct observation, $s_1^u(P_2\square C_3)=6$, $s_1^u(P_3\square C_3)=s_1^u(C_3\square C_3)=9$, $s_1^u(P_2\square C_m)=8$, $s_1^u(P_3\square C_n)=s_1^u(C_3\square C_n)=12$ for all $n\ge 4$. The following is similar to Theorem \ref{ultra 1 security pmpn} with some suitable modifications.
	\begin{theorem}
		For integers $m,n\geq 4$,  $s_1^u(P_m\square C_n)=min\{4m,2n\}$ and  $s_1^u(C_m\square C_n)=min\{4m,4n\}$.
	\end{theorem}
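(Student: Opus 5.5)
The argument has two halves: explicit ultra 1-secure sets of the stated sizes give the upper bound, and an adaptation of the proof of Theorem \ref{ultra 1 security pmpn} gives the lower bound. In $P_m\square C_n$, rows induce $C_n$ and columns induce $P_m$. In $C_m\square C_n$, both rows and columns are cycles.

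\textbf{Upper bound.} For $P_m\square C_n$ I would check two candidates.

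\begin{itemize}
\item $S=Row(1)\cup Row(2)$, of size $2n$. Here $Bord(S)=Row(2)$, each border vertex has exactly one outside neighbour, and $Int(S)=Row(1)$. The defense $D(v_{1j})=D(v_{2j})=v_{2j}$ gives $|D^{-1}(x)|=2=1+1$. Equivalently, Theorem \ref{char ultra k secure} holds: for $X\subseteq Row(2)$ we have $|N[X]\cap S|\geq 2|X|$.
\item Four consecutive columns $Col(1)\cup\cdots\cup Col(4)$, of size $4m$. The two outer columns are the border and the two inner columns are the interior, each border vertex again has one outside neighbour, and the defense pairs $v_{i1}$ with $v_{i2}$ and $v_{i4}$ with $v_{i3}$.
\end{itemize}

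For $C_m\square C_n$ the analogous four consecutive rows, or four consecutive columns, give $\min\{4m,4n\}$. Three consecutive lines do not suffice, since the two border lines would have to share one interior line. This is consistent with Theorem \ref{border}, which for $k=1$ requires $|Int(S)|\geq|Bord(S)|$.

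\textbf{Lower bound.} Let $S$ be a minimal ultra 1-secure set; by Proposition \ref{min is connected}, $\langle S\rangle$ is connected. I split according to whether $S$ meets every column (respectively every row).

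\begin{itemize}
\item If $S$ misses some column of a cyclic direction, cut the cycle there. The situation is then as in Case 2 of Theorem \ref{ultra 1 security pmpn}, except that there are two "last" columns $Col(r)$ and $Col(r')$, one on each side of the gap, and both are fully contained in $Bord(S)$.
    \begin{itemize}
    \item The Claim argument shows $Col(r)\subseteq S$. I apply Theorem \ref{char ultra k secure} with $X=Col(r)\cap S$: each $x\in X$ has only one neighbour within its line direction in $S$, so $|N[X]\cap S|\leq 2|X|$. For the strict inequality $|N[X]-S|\geq|X|+1$ to hold when $Col(r)\not\subseteq S$, the columns must be paths, which covers $P_m\square C_n$. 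When the columns are cycles ($C_m\square C_n$), I instead use the sum $\sum_{x\in X}|N[x]-S|$, which counts each vertex of $Col(r+1)$ at least once plus each missing neighbour in $Col(r)$; this still exceeds $|X|$ when $Col(r)\not\subseteq S$.
    \item If $r\neq r'$, then $|Bord(S)|\geq 2m$, and Theorem \ref{border} gives $|S|\geq 4m$.
    \item If $r=r'$, then $S\subseteq Col(r)$, which is impossible since a column cannot be an ultra 1-secure set: it has no interior, contradicting Corollary \ref{int non empty}.
    \end{itemize}
\item If $S$ meets every column, consider border vertices per column.
    \begin{itemize}
    \item If every column contains a border vertex, then $|Bord(S)|\geq n$ and $|S|\geq 2n$.
    \item Otherwise some column lies in $Int(S)$. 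Its neighbours along the rows are then in $S$, so $S$ contains three consecutive columns in $C_n$. In that case I argue along rows instead: either $S$ misses some row, and the row-direction version of the previous bullet gives $|S|\geq 4n$ (for $C_m\square C_n$) or the path-direction bound $|S|\geq 2n$ as in Theorem \ref{ultra 1 security pmpn}; or $S$ meets every row and column and the counting $|S|\geq 2|Bord(S)|$ is applied again.
    \end{itemize}
\end{itemize}

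\textbf{Main obstacle.} The hard step is the subcase where $S$ meets every row and every column but some line lies entirely in the interior, for $C_m\square C_n$. There I must show $|S|\geq\min\{4m,4n\}$ rather than only $2\min\{m,n\}$. My first attempt would be:

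\begin{enumerate}
\item The border of $S$ separates $V-S$ from $Int(S)$ on the torus.
\item Each row meeting $V-S$ contains at least two border vertices, one at each end of every maximal $S$-segment.
\item If every row meets $V-S$, this gives $|Bord(S)|\geq 2m$ and hence $|S|\geq 4m$.
\item Otherwise some row lies entirely in $S$. If every column meets $V-S$, the symmetric argument gives $|S|\geq 4n$; if some column also lies entirely in $S$, the $4m$ or $4n$ bound must still be extracted from that configuration.
\end{enumerate}

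The analogous count for $P_m\square C_n$, with a single end per column segment, yields the $2n$ term. The small cases $m,n\geq 4$ should be checked by hand where this segment counting degenerates.
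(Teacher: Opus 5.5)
\textbf{The gap is the case where $S$ meets every row and every column, and for the torus it cannot be closed as stated.}

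You follow the route the paper intends; its proof is only the remark that Theorem \ref{ultra 1 security pmpn} carries over ``with some suitable modifications''. The parts you carry out are sound. Your upper-bound sets are ultra 1-secure. When $S$ misses a column whose index runs over a cycle, the two extreme columns are contained in $Bord(S)$, so Theorem \ref{border} gives $|S|\ge 4m$. (Your worry that the Claim needs path columns is unfounded: $|N[X]-S|\ge |X|+1$ holds for any connected column.)

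\textbf{$P_m\square C_n$: fillable, but not by the count you suggest.} A column lying in $Int(S)$ has no border vertex, so ``one end per column segment'' yields nothing there. Use the rows instead. Since $Col(j-1)\cup Col(j)\cup Col(j+1)\subseteq S$, every row not contained in $S$ is a cycle meeting $S$ in an arc of length at least $3$. The two distinct ends of that arc lie in $Bord(S)$. If no row lies in $S$, this gives $|Bord(S)|\ge 2m$ and $|S|\ge 4m$. Otherwise a full row plus three full columns gives $|S|\ge n+3m-3\ge \min\{4m,2n\}$, because $m\ge 3$.

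\textbf{$C_m\square C_n$: two holes, not one.} First, the branch where every column contains a border vertex gives only $|S|\ge 2n$. That is below $\min\{4m,4n\}$ whenever $n<2m$, and you treat this branch as finished. Second, in your step 4 the available count (a full row plus three full columns) gives only $|S|\ge n+3m-3$. That is below the target when $m\le n\le m+2$.

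Neither hole can be repaired in general, because the torus formula is false for $m=n=4$. Take $C_4\square C_4$ ($\cong Q_4$) and $S=V-\{v_{11}\}$.
\begin{itemize}
\item Each border vertex $v_{21},v_{41},v_{12},v_{14}$ has exactly one neighbour outside $S$.
\item Each has its own interior neighbour: $v_{31},v_{44},v_{22},v_{13}$ respectively.
\item Hence $|N[X]\cap S|\ge 2|X|=|X|+\sum_{x\in X}|N[x]-S|$ for every $X\subseteq Bord(S)$.
\end{itemize}
By Theorem \ref{char ultra k secure}, $S$ is an ultra 1-secure set of order $15<16$. It lies exactly in your step 4, since $Row(3)\subseteq S$ and $Col(3)\subseteq Int(S)$.

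Similarly, $V-\{v_{11},v_{12}\}$ is ultra 1-secure of order $14$. Here $v_{21},v_{41},v_{14},v_{22},v_{42},v_{13}$ receive the private helpers $v_{31},v_{44},v_{24},v_{32},v_{43},v_{23}$. This set has a border vertex in every column, so it lies in the branch where you stop at $2n$.

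So for the torus the statement needs at least the exclusion of $m=n=4$. For $m,n\ge 5$ you would need a genuinely new argument in the ``meets every row and column'' case. It would have to force roughly $2\min\{m,n\}$ border vertices, or give an equivalent bound on $|N[Bord(S)]\cap S|$. Neither your proposal nor the paper's ``suitable modifications'' supplies this.
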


	For integers $m,n$ with $\min\{m,n\}\leq 4$, $s_2^u(P_m\square P_n)=mn$. Now we obtain bounds on ultra 2-security number of $P_m\square P_n$ for $m,n\geq 5$. 
	
	\begin{lemma}\label{ultra 2 secure pmpn lemma 1}
		For $m,n\geq 5$, let $S$ be an ultra 2-secure set of $P_m\square P_n$. If  $v_{ij}\in \partial S$ for some $i,j$ with $2<i<m-1$ and $2<j<n-1$, then $N[N[v_{ij}]]-\{v_{ij}\}\subseteq S$.
	\end{lemma}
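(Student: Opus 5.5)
The plan is to work only with the local inequality of Theorem \ref{char ultra k secure} with $k=2$, applied to very small sets $X\subseteq Bord(S)$: first singletons, then pairs. Write $v=v_{ij}$. The hypothesis $2<i<m-1$, $2<j<n-1$ guarantees two things. Every vertex of $N[N[v]]$ exists in $P_m\square P_n$. Every neighbor of $v$ has degree $4$, since its coordinates lie in $\{2,\dots,m-1\}\times\{2,\dots,n-1\}$.

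\textbf{Step 1 (degree-4 border vertices).} Let $x\in Bord(S)$ with $deg(x)=4$. Taking $X=\{x\}$ in Theorem \ref{char ultra k secure} gives $|N[x]\cap S|\geq 2+|N[x]-S|=2+5-|N[x]\cap S|$, so $|N[x]\cap S|\geq 4$. Hence $x$ has exactly one neighbor outside $S$. In particular, if some neighbor $u$ of $v$ lies in $S$, then $u\in Bord(S)$, its unique outside neighbor is $v$, and so $N[u]-\{v\}\subseteq S$.

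\textbf{Step 2 (pairs on a $4$-cycle).} Since $v\in\partial S$, $v$ has some neighbor $w\in S$. Let $b$ be a neighbor of $v$ perpendicular to the edge $vw$, and let $a$ be the common neighbor of $w$ and $b$ other than $v$, so that $v,w,a,b$ is a $4$-cycle. By Step 1, $a\in S$. Suppose $b\notin S$. Then $a,w\in Bord(S)$, and Step 1 applied to $a$ and to $w$ shows each has exactly one outside neighbor. Take $X=\{w,a\}$. Because the grid has no triangles, $N[X]$ consists of exactly $6$ vertices, and $v$ and $b$ are among them and lie outside $S$. Thus $|N[X]\cap S|\leq 4$, while Theorem \ref{char ultra k secure} demands $|N[X]\cap S|\geq 2\cdot 2+1+1=6$, a contradiction. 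Hence both perpendicular neighbors $b,b'$ of $v$ lie in $S$. Now repeat the same argument with $b$ in place of $w$. The neighbor of $v$ opposite to $w$ is perpendicular to $vb$, so it also lies in $S$. Therefore $N(v)\subseteq S$.

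\textbf{Step 3 (conclusion).} Every $u\in N(v)$ is now a degree-4 border vertex of $S$. By Step 1, $N[u]-\{v\}\subseteq S$. Taking the union over $u\in N(v)$ gives $N[N[v]]-\{v\}\subseteq S$, which is the statement.

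The main obstacle is Step 2: the singleton inequality alone cannot exclude an outside neighbor of $v$ sitting next to an inside one. The key is to pick the pair $X$ as two adjacent $S$-vertices on a common $4$-cycle with $v$. This pair shares no outside neighbor, so the sum $\sum_{x\in X}|N[x]-S|$ counts $v$ and $b$ separately, while $N[X]$ stays small. Care is needed to confirm that $|N[X]|=6$ in the grid, and that all vertices used have degree $4$ under the index hypotheses.
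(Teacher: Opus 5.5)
Steps 1 and 3 are fine, but Step 2 rests on a miscount, and the argument does not survive the correction. Since $w$ and $a$ are adjacent degree-$4$ vertices in a triangle-free graph, $N[w]\cap N[a]=\{w,a\}$. So $|N[\{w,a\}]|=5+5-2=8$, not $6$. With $v,b\notin S$ you only get $|N[X]\cap S|\le 6$, while Theorem \ref{char ultra k secure} demands $|N[X]\cap S|\ge 2\cdot 2+1+1=6$. There is no contradiction.

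Nor can this be patched by a cleverer small set. Take $v=v_{ij}$ and $b=v_{i(j+1)}$ far from the boundary, and let $S=V-\{v,b\}$. Each of the six border vertices has exactly one outside neighbor. A direct check shows that every $X\subseteq Bord(S)$ with $|X|\le 4$ satisfies $|N[X]\cap S|\ge 3|X|$. The inequality fails only for sets of size $5$ or $6$; for example, $X=Bord(S)$ gives $|N[X]\cap S|=16<18$. So ruling out an outside neighbor $b$ of $v$ requires border vertices on both sides of the edge $vb$ at once. Since nothing is known a priori about the other side, no argument confined to one $4$-cycle can work. (The paper's own proof also uses only sets of size at most three and has the same defect. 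In this configuration its final set $\{v_{(i-1)j},v_{(i-1)(j+1)},v_{(i+1)(j+1)}\}$ gives $10\ge 9$, which is no contradiction.)

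One way to close the gap is to apply your Step 1 globally.
\begin{itemize}
\item The singleton inequality also shows that a degree-$3$ border vertex has exactly one neighbor outside $S$, and that a degree-$2$ vertex is never in $Bord(S)$. Hence no vertex of $S$ has two neighbors in $V-S$.
\item Let $C$ be the component of $\langle V-S\rangle$ containing $v$. No vertex outside $C$ has two neighbors in $C$, so $C$ is closed under completing $4$-cycles.
\item A connected subset of the grid with this property is a box $[r_1,r_2]\times[c_1,c_2]$, because maximal row segments of $C$ joined by a vertical edge must coincide.
\item Put $X=N(C)\subseteq Bord(S)$. Theorem \ref{char ultra k secure} gives $|N[X]\cap S|\ge 3|X|$.
\item On the other hand, $N[X]\cap S$ consists only of the sides of the box, the parallel lines just beyond them, and at most four diagonal corners. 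So $|N[X]\cap S|\le 2|X|+4$, and hence $|X|\le 4$.
\item Because $3\le i\le m-2$, $3\le j\le n-2$ and $m,n\ge 5$, every box containing $v$ other than $\{v\}$ has at least $5$ neighbors.
\end{itemize}
Therefore $C=\{v\}$, that is, $N(v)\subseteq S$, and your Step 3 finishes the proof.
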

	\begin{proof}
		Let $v_{ij}\in \partial S$  for some $i,j$ with $2<i<m-1$ and $2<j<n-1$. Then at least one of $v_{(i-1)j}, v_{(i+1)j}, v_{i(j-1)}, v_{i(j+1)}$ belongs to $Bord(S)$. Without loss of generality, assume that $v_{(i-1)j}\in Bord(S)$. Since $S$ is ultra 2-secure, $v_{(i-1)(j-1)}$, $v_{(i-1)(j+1)}$, $v_{(i-2)j}\in S$. \\
		\textbf{Claim: }$v_{(i-1)(j+1)}\notin Bord(S)$.\\
		Suppose $v_{(i-1)(j+1)}\in Bord(S)$. Then $|\partial S\cap\{v_{(i-2)(j+1)}, v_{i(j+1)}, v_{(i-1)(j+2)}\}|\geq 1$. 
		\begin{itemize}
			\item If $|\partial S\cap\{v_{(i-2)(j+1)}, v_{i(j+1)}, v_{(i-1)(j+2)}\}|> 1$, then the set $|N[v_{(i-1)(j+1)}]\cap S|\le 3$ and $2|\{v_{(i-1)(j+1)}\}|+ |N[v_{(i-1)(j+1)}]-S|>3$, which contradicts Theorem \ref{char ultra k secure}. Thus exactly one of $v_{(i-2)(j+1)}, v_{i(j+1)}, v_{(i-1)(j+2)}$ belongs to $\partial S$.
			\item Further, if $v_{(i-2)(j+1)}\in \partial S$ or $v_{(i-1)(j+2)}\in \partial S$, then $v_{i(j+1)}$ belongs to $S$ and hence to $Bord(S)$. Then  for $X=\{v_{(i-1)j}, v_{(i-1)(j+1)}, v_{i(j+1)}\}\subseteq Bord(S)$, $|N[X]\cap S|\le 8$ and $2|X|+\underset{x\in X}{\sum}|N[x]-S| \ge 9$, again a contradiction to  Theorem \ref{char ultra k secure}. Thus $v_{i(j+1)}\in \partial S$ and $v_{(i-2)(j+1)}, v_{(i-1)(j+2)}\in S$.
			\item Further,  $v_{i(j+2)}\in S$. (Otherwise, $X=\{v_{(i-1)j}, v_{(i-1)(j+1)}, v_{(i-1)(j+2)}\}\subseteq Bord(S)$ fails to satisfy Theorem \ref{char ultra k secure}). 
			\item Since $S$ is ultra 2-secure and $v_{i(j+1)}\in \partial S$, clearly $v_{(i+1)(j+2)}\in S$ and hence $v_{(i+1)(j+1)}\in S$ (otherwise, $X=\{v_{(i-1)(j+1)}, v_{i(j+2)}, v_{(i+1)(j+2)}\}\subseteq Bord(S)$ fails to satisfy Theorem \ref{char ultra k secure}).
		\end{itemize}
		By the above argument, $v_{i(j+1)}\in \partial S$ and hence $v_{(i+1)(j+1)}\in Bord(S)$. Now, $X=\{v_{(i-1)j}, v_{(i-1)(j+1)}, v_{(i+1)(j+1)}\}\subseteq Bord(S)$ fails to satisfy Theorem \ref{char ultra k secure}. Hence the claim holds. 
		
		By the claim, it follows that $v_{i(j+1)}\in S$. By symmetry, all the neighbors of $v_{ij}$ belong to $S$. Since $S$ is ultra 2-secure,  $N[N[v_{ij}]]-\{v_{ij}\}\subseteq S$. 
	\end{proof}
	\begin{observation}\label{obs every 3-m-2 row/ 3-n-2 column  intersects S}
		By Lemma \ref{ultra 2 secure pmpn lemma 1}, it follows that every $Row(i)\cap S\neq\emptyset$, $Col(j)\cap S\neq\emptyset$ for all $i,j$, $2< i<m-1$, $2< j<n-1$.
	\end{observation}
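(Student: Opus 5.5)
The plan is to argue by contradiction, handling rows; columns follow by the symmetric argument with the roles of $m$ and $n$ exchanged. Let $S$ be an ultra $2$-secure set of $P_m\square P_n$ ($m,n\ge 5$). If $S=V$ there is nothing to prove, so assume $S\subset V$. Suppose some $Row(i)$ with $2<i<m-1$ misses $S$. The key idea is to exhibit a vertex $v_{ij}\in \partial S$ with $2<j<n-1$. Then Lemma \ref{ultra 2 secure pmpn lemma 1} gives $N[N[v_{ij}]]-\{v_{ij}\}\subseteq S$. This set contains $v_{i(j-1)}$ and $v_{i(j+1)}$, which lie in $Row(i)$, contradicting $Row(i)\cap S=\emptyset$.

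\textbf{Step 1 (reduction to an adjacent row).} Let $R$ be the set of row indices meeting $S$; it is nonempty. First I rule out $R\subseteq\{1,m\}$. In that case every component of $\langle S\rangle$ lies in a single boundary row. By Proposition \ref{min is connected} such a component $C$ is itself ultra $2$-secure, so Corollary \ref{int non empty} gives $Int(C)\neq\emptyset$. But a vertex of $C\subseteq Row(1)$ has its neighbour in $Row(2)$ outside $C$, so $Int(C)=\emptyset$, a contradiction. Hence $R$ contains some row in $\{2,\dots,m-1\}$. Now choose a pair (missing row $i$ with $2<i<m-1$, row $r\in R$) minimizing $|i-r|$. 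Every row strictly between them either meets $S$ or is a missing row in the range $(2,m-1)$, so minimality forces $|i-r|=1$. Note that if $r\in\{1,m\}$ then $i\in\{2,m-1\}$, which is outside the allowed range; so $r$ is a genuine neighbouring row. Without loss of generality $r=i-1$.

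\textbf{Step 2 (finding the boundary vertex).} Take $u=v_{(i-1)j}\in S$. If $3\le j\le n-2$, then $v_{ij}\in\partial S$ and we are done. Otherwise, by symmetry, $j\in\{1,2\}$.

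Suppose first $j=1$. Then $u\in Bord(S)$ because $v_{i1}\notin S$. Proposition \ref{ultra k defense proposition} gives $|N[u]\cap S|\ge |N[u]-S|+2\ge 3$. Since $|N[u]|=4$, this forces $v_{(i-1)2}\in S$, so we may pass to the case $j=2$.

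Now suppose $j=2$. Apply Theorem \ref{char ultra k secure} to $X=\{v_{(i-1)1},v_{(i-1)2}\}\subseteq Bord(S)$, or to $X=\{v_{(i-1)2}\}$ alone if $v_{(i-1)1}\notin S$.
\begin{itemize}
\item In the two-vertex case, $N[X]\cap S$ has at most $5$ elements, since $v_{i1},v_{i2}\notin S$. The right-hand side is $4+\sum_{x\in X}|N[x]-S|$. This sum is at least $2$, and becomes at least $3$ if $v_{(i-1)3}\notin S$, which would give $7>5$.
\item The single-vertex case is handled the same way with smaller counts.
\end{itemize}
In either case $v_{(i-1)3}\in S$, hence $v_{i3}\in\partial S$ with $2<3<n-1$ (using $n\ge 5$). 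The argument for columns $n-1,n$ is the mirror image and yields $v_{i(n-2)}\in\partial S$.

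\textbf{Main obstacle.} The delicate part is Step 1. The observation is false for an arbitrary far-away $S$ unless one first shows that some missing middle row is adjacent to a row meeting $S$. The minimal-distance choice together with the $Int\neq\emptyset$ argument for sets confined to boundary rows is what I expect to carry the weight. The local counting in Step 2 is routine but needs care with the degrees of vertices near the corners.
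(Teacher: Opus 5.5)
Your proof is correct and follows the paper's intended argument. The paper only cites Lemma~\ref{ultra 2 secure pmpn lemma 1}, and your Steps~1--2 supply what it leaves implicit: a vertex $v_{ij}\in\partial S$ in the empty row with $3\le j\le n-2$, whose row-neighbours $v_{i(j\pm1)}$ the lemma then forces into $S$.

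One repair is needed in Step~1. Minimize $|i-r|$ only over $r\in R\cap\{2,\dots,m-1\}$, which is nonempty by your first paragraph. If $r$ ranges over all of $R$, take $i=3$, $r=1$: the missing row $2$ lies strictly between them but outside $(2,m-1)$, so your sentence ``every row strictly between them \dots'' is false as stated. The conclusion $|i-r|=1$ still holds, but only because of that nonemptiness.
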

	\begin{lemma}\label{row-1-row-2}
		Let $S$ be an ultra 2-secure set of $P_m\square P_n$. Then $Row(i), Col(j) \subseteq S$ for all $i\in \{1, 2, m-1, m\}$ and $j\in \{1, 2, n-1, n\}$. 
	\end{lemma}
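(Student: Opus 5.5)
The plan is to prove that no vertex of the frame
$F=\bigcup_{i\in\{1,2,m-1,m\}}Row(i)\cup\bigcup_{j\in\{1,2,n-1,n\}}Col(j)$
lies in $\partial S$, and then combine this with Lemma \ref{ultra 2 secure pmpn lemma 1} and Observation \ref{obs every 3-m-2 row/ 3-n-2 column  intersects S} to push $S$ out to the whole frame. By the symmetries of the grid it is enough to handle $Row(1)$ and $Row(2)$; the other rows and columns of $F$ follow the same way.

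First I will record the local constraints coming from Theorem \ref{char ultra k secure} with $k=2$ and $X=\{x\}$. These give $|N[x]\cap S|\ge 2+|N[x]-S|$ for every $x\in Bord(S)$. A corner has degree $2$, so by Remark \ref{degree of border vertex} it is never a border vertex. A border vertex of degree $3$ or $4$ has exactly one attacker, and all of its other neighbours lie in $S$.

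The key step is to suppose $u\in\partial S$ with $u$ in $Row(1)\cup Row(2)$, and to derive a contradiction.
\begin{itemize}
\item Let $x\in Bord(S)$ be a neighbour of $u$. By the single-vertex constraint, every neighbour of $x$ other than $u$ is in $S$.
\item Neighbours of $u$ that lie in $S$ are again border vertices, and each of them also has exactly one attacker, namely $u$.
\item Near row $1$ the closed neighbourhoods are small: $|N[v_{1j}]|\le 4$ and $|N[v_{2j}]|\le 5$. So a set $X$ consisting of two or three border vertices around $u$ (for example $x$, another $S$-neighbour of $u$, and a vertex of row $1$ or $2$ forced into $Bord(S)$) has $|N[X]\cap S|$ too small. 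Concretely, I expect $|N[X]\cap S|\le 3|X|-1$, while $2|X|+\sum_{x\in X}|N[x]-S|\ge 3|X|$. This violates Theorem \ref{char ultra k secure}.
\end{itemize}
The cases are split by whether $u$ is in row $1$ or row $2$, and by which neighbour $x$ of $u$ is a border vertex (in the same row, or in the adjacent row). Columns near $1,2,n-1,n$ get the same treatment.

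This case analysis is the main obstacle. It has the flavour of the claim inside Lemma \ref{ultra 2 secure pmpn lemma 1}: each forced membership produces a new border vertex, and one has to find, in each branch, a small $X$ for which the count fails. I would first try $X$ equal to the set of $S$-neighbours of $u$. If that count is not enough, I would enlarge $X$ by one forced vertex in row $1$ or $2$, exactly as in the bullets of that lemma's proof.

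Once $\partial S\cap F=\emptyset$ is established, I fix a middle column $Col(j)$ with $2<j<n-1$. It meets $S$ by Observation \ref{obs every 3-m-2 row/ 3-n-2 column  intersects S}. Suppose $v_{1j}\notin S$, and take the topmost $S$-vertex $v_{ij}$ of $Col(j)$ for which the vertex just above it is outside $S$. Then $v_{(i-1)j}\in\partial S$.
\begin{itemize}
\item If $i-1\le 2$, this contradicts the frame step.
\item If $i-1\ge 3$, Lemma \ref{ultra 2 secure pmpn lemma 1} puts $v_{(i-2)j}$ into $S$. Iterating this upward, we eventually either reach a contradiction or get $v_{1j},v_{2j}\in S$.
\end{itemize}
Finally, $\langle Row(1)\cup Row(2)\rangle$ is connected and contains an $S$-vertex. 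Any vertex of these rows outside $S$ but adjacent to $S$ would be in $\partial S\cap F$, which is impossible. Hence $Row(1)\cup Row(2)\subseteq S$, and by symmetry the same holds for the remaining rows and columns of the statement.
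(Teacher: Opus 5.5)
Your outline is the paper's. You suppose some $u\in\partial S$ lies in $Row(1)\cup Row(2)$. Every border vertex has degree at least $3$ and exactly one attacker, so Theorem \ref{char ultra k secure} reads $|N[X]\cap S|\ge 3|X|$ for $X\subseteq Bord(S)$, and each $S$-neighbour of $u$ forces all its other neighbours into $S$. You then look for a small violating $X$, and finally fill the frame by symmetry, the fact that middle columns meet $S$, and connectivity.

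The closing step is fine, with two small corrections. With $i$ chosen minimal you get an immediate contradiction, so no iteration is needed. The case $i-1\ge m-1$ must be excluded by the frame step, not by Lemma \ref{ultra 2 secure pmpn lemma 1}.

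The gap is that the central claim $\partial S\cap F=\emptyset$ is never proved. The case analysis over the position of $u$ and the pattern $N(u)\cap S$ is the whole content of the lemma. You only predict that some $X$ with $|N[X]\cap S|\le 3|X|-1$ will turn up.

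That is not automatic, and your recipe does not always find it. Let $u=v_{2j}$ with $3\le j\le n-2$ and $N(u)\cap S=\{v_{1j},v_{2(j-1)},v_{2(j+1)}\}$.
\begin{itemize}
\item The unique-attacker property forces $v_{1(j\pm1)},v_{2(j\pm2)},v_{3(j\pm1)}\in S$.
\item Your first choice $X=N(u)\cap S$ gives $|N[X]\cap S|=9=3|X|$, so there is no contradiction.
\item The only further vertices forced into $Bord(S)$ are $v_{3(j\pm1)}$, because they are adjacent to $v_{3j}\notin S$. These lie in row $3$, not in rows $1$ or $2$ as your fallback suggests.
\item Adding $v_{3(j-1)}$ to $X$ brings in $v_{3(j-2)},v_{3j},v_{4(j-1)}$. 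Then $|N[X]\cap S|\le 11<12$, which is the violation you need.
\end{itemize}
The other three-neighbour patterns at $v_{2j}$ also satisfy the inequality with $X=N(u)\cap S$. There you need an extra row-$1$ border vertex such as $v_{1(j-1)}$ or $v_{1(j+1)}$.

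The paper's own step ``if $|N(v_{ij})\cap S|>2$, then $X=N(v_{ij})\cap S$ gives a contradiction'' fails precisely when $i=2$ and $|N(v_{ij})\cap S|=3$, so you cannot simply import it. To close your argument you must go through every pattern for $u$ in row $1$ (including the corner) and in row $2$, and write down an explicit violating $X$ for each.
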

	\begin{proof}
		Let $S$ be an ultra 2-secure set in $P_m\square P_n$. Suppose $v_{ij}\in \partial S$ for some $i,j$, $1\leq i\leq 2$, $1\leq j\leq n$. Consider the following cases.
		\begin{enumerate}
			\item [\textbf{Case-1:}]$i=1$, $j=1$.\\
			Then without loss of generality, assume that $v_{12}\in S$. Since $S$ is ultra 2-secure, $v_{13}, v_{22}\in S$. If $v_{21}\notin S$, then for $X=\{v_{12}, v_{22}\}\subseteq Bord(S)$, $5\geq |N[X]\cap S|\geq 2|X|+\underset{z\in  X}{\sum}|N[z]-S|\ge 4+2=6$, a contradiction to Theorem \ref{char ultra k secure}. If $v_{21}\in S$, then $X=\{v_{12}, v_{21}\}$ leads to a contradiction.
			\item [\textbf{Case-2:}]$2\leq j\leq n-1$.\\
			If $|N(v_{ij})\cap S|>2$, then $X=N(v_{ij})\cap S\subseteq Bord(S)$ leads to a contradiction to Theorem \ref{char ultra k secure}. Thus $|N(v_{ij})\cap S|\leq 2$.
			Suppose $|N(v_{ij})\cap S|=1$. If $N(v_{ij})\cap S=\{v_{i(j-1)}\}$, then $X=\{v_{i(j-1)}, v_{(i+1)(j-1)}\}\subseteq Bord(S)$ leads to a contradiction to Theorem \ref{char ultra k secure} if $i=1$ and $X=\{v_{(i-1)(j-1)}, v_{i(j-1)}, v_{(i+1)(j-1)}\}\subseteq Bord(S)$ leads to a contradiction whenever $i=2$. If $N(v_{ij})\cap S=\{v_{i(j+1)}\}$, then a similar argument leads to a contradiction. If $N(v_{ij})\cap S=\{v_{(i+1)j}\}$, then $X=\{v_{(i+1)(j-1)}, v_{(i+1)(j+1)}, v_{(i+1)j}\}\subseteq Bord(S)$ leads to a contradiction. Further, if $N(v_{ij})\cap S=\{v_{(i-1)j}\}$, then $X=\{v_{(i-1)(j-1)}, v_{(i-1)(j+1)}, v_{(i-1)j}\}\subseteq Bord(S)$ leads to a contradiction.
			Therefore $|N(v_{ij})\cap S|\neq 1$. Suppose $|N(v_{ij})\cap S|= 2$. Then consider the following subcases.\\
			\textbf{Subcase-1: } $i=1$.\\
			If $N(v_{1j})\cap S=\{v_{1(j-1)}, v_{1(j+1)}\}$, then  as $S$ is ultra 2-secure, $v_{2(j+1)}, v_{2(j-1)}\in S$ and hence  $v_{2(j+1)}, v_{2(j-1)}\in Bord(S)$. Then $X=\{v_{1(j-1)}, v_{1(j+1)}, v_{2(j+1)}, v_{2(j-1)}\}$ $\subseteq Bord(S)$ leads to a contradiction to Theorem \ref{char ultra k secure}. If $N(v_{1j})\cap S=\{v_{1(j-1)}, v_{2j}\}$, then $X=\{v_{1(j-1)}, v_{2j}, v_{2(j+1)}\}$ leads to a contradiction to Theorem \ref{char ultra k secure}. If $N(v_{1j})\cap S=\{v_{1(j+1)}, v_{2j}\}$, then $X=\{v_{1(j-1)}, v_{2j}, v_{2(j-1)}\}$ leads to a contradiction.\\
			\textbf{Subcase-2: } $i=2$.\\
			Suppose $N(v_{2j})\cap S=\{u,w\}$. Since $S$ ultra 2-secure, all the neighbors of $u$ and $w$ except $v_{ij}$ belong to $S$. If $u$ and $w$ belong to same row/column, then it leads to a contradiction with the fact that $|N(v_{2j})\cap S|\leq 2$. Thus $u$ and $w$ must belong to different rows and columns. Without loss of generality, assume that $u=v_{3j}$ and $w=v_{2(j+1)}$. Since $S$ is ultra 2-secure, $v_{1(j+1)}, v_{3(j-1)}\in S$ and hence $v_{1(j+1)}, v_{3(j-1)}\in Bord(S)$. Then $X=\{v_{1(j+1)}, v_{3(j-1)}$, $v_{3j}, v_{2(j+1)}\}\subseteq Bord(S)$ leads to a contradiction to Theorem \ref{char ultra k secure}. 
		\end{enumerate}

		\noindent By the above cases, $|N(v_{ij})\cap S|=0$, which is a contradiction to the hypothesis that $v_{ij}\in \partial S$. Thus $v_{ij}\notin \partial S$ for any $i\in \{1, 2\}$ and $j\in \{1, 2, \ldots, n-1\}$. Now by symmetry, it follows that $v_{ij}\notin\partial S$ whenever $i\in \{1, 2, m-1, m\}$ or $j\in \{1, 2, n-1, n\}$. By Observation \ref{obs every 3-m-2 row/ 3-n-2 column  intersects S}, $Row(3), Row(m-2), Col(3), Col(n-2)$ intersect $S$. Thus it follows that $Row(i), Col(j) \subseteq S$ for all $i\in \{1, 2, m-1, m\}$ and $j\in \{1, 2, n-1, n\}$.
	\end{proof}

	
	\begin{lemma}\label{ultra 2 secure pmpn lemma 2}
		Let $m,n\geq 5$ and $S$ be any ultra 2-secure set in $P_m\square P_n$. Then the distance between any two vertices of $V-S$ is at least 5.
	\end{lemma}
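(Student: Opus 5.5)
The plan is to combine Lemma \ref{row-1-row-2} and Lemma \ref{ultra 2 secure pmpn lemma 1} to pin down where vertices of $V-S$ can sit, and then derive a counting contradiction from Theorem \ref{char ultra k secure} with a suitable choice of $X\subseteq Bord(S)$.

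First I would localize $V-S$. By Lemma \ref{row-1-row-2}, rows $1,2,m-1,m$ and columns $1,2,n-1,n$ lie in $S$. So every vertex $v_{ij}\in V-S$ satisfies $2<i<m-1$ and $2<j<n-1$. In particular, for every $u\in V-S$ the ball $B_2(u)=N[N[u]]$ lies entirely inside the grid, has exactly $13$ vertices, and $N(u)$ has exactly $4$ vertices.

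Next, suppose for contradiction that $V-S$ contains two vertices at distance at most $4$. Among all such pairs, choose $u,w$ with $d=d(u,w)$ minimal, and fix a shortest $u$--$w$ path. By minimality, every internal vertex of this path lies in $S$. Since $d\ge 1$ and the path has internal vertices when $d\ge 2$, both $u$ and $w$ have a neighbour in $S$, so $u,w\in\partial S$. (If $d=1$, then $u\in\partial S$ because $u$ is adjacent to $w$ on one side and, being in the interior region, to some vertex of $S$ otherwise; this is immediate from Lemma \ref{row-1-row-2} together with Observation \ref{obs every 3-m-2 row/ 3-n-2 column  intersects S}.) Lemma \ref{ultra 2 secure pmpn lemma 1} now applies to both endpoints and gives $B_2(u)-\{u\}\subseteq S$ and $B_2(w)-\{w\}\subseteq S$. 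This already rules out $d\le 2$, since then $w\in B_2(u)-\{u\}\subseteq S$. So only $d\in\{3,4\}$ remains.

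For $d\in\{3,4\}$, take $X=N(u)\cup N(w)$. Every neighbour of $u$ or $w$ lies in $S$ and is adjacent to a vertex outside $S$, so $X\subseteq Bord(S)$. Since $d\ge 3$, the sets $N(u)$ and $N(w)$ are disjoint, giving $|X|=8$. Each $x\in X$ has at least one neighbour outside $S$, so $\sum_{x\in X}|N[x]-S|\ge 8$. Theorem \ref{char ultra k secure} with $k=2$ would then require
$$|N[X]\cap S|\ge 2\cdot 8+8=24.$$
On the other hand, $N[X]=B_2(u)\cup B_2(w)$, and removing $u,w$ gives at most $12+12=24$ vertices. Because $d(u,w)\le 4$, the two radius-$2$ balls share at least one vertex (a midpoint of the path), so in fact $|N[X]\cap S|\le 23$. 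This is a contradiction, and hence any two vertices of $V-S$ are at distance at least $5$.

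The main obstacle I expect is the bookkeeping. One must confirm that all relevant vertices are far enough from the grid boundary that the balls have full size $13$ and Lemma \ref{ultra 2 secure pmpn lemma 1} applies, which is exactly what Lemma \ref{row-1-row-2} supplies. One must also check the overlap claim for $d=3$ as well as $d=4$, which holds since some vertex of the path lies within distance $2$ of both endpoints.
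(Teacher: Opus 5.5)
Your argument is the paper's argument. The paper likewise asserts $V-S=\partial S$ and uses Lemma~\ref{ultra 2 secure pmpn lemma 1} to get distance at least $3$. It then takes $X=N(x)\cup N(y)$ with $|X|=8$ and $\sum_{z\in X}|N[z]-S|\ge 8$, and bounds $|N[X]\cap S|\le 23$ because the two radius-$2$ balls share a path vertex, contradicting Theorem~\ref{char ultra k secure}. Your closest-pair device handles $d\ge 2$ correctly, and slightly more explicitly than the paper.

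The one step that fails as written is the case $d=1$. You justify $u\in\partial S$ by Lemma~\ref{row-1-row-2} and the Observation, but neither gives a neighbour of $u$ in $S$. The Observation only says each middle row and column meets $S$ somewhere, which says nothing about the neighbourhood of $u$.

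What you need is Lemma~\ref{ultra 2 secure pmpn lemma 1} plus a propagation step. Take a shortest path $u=p_0,p_1,\dots,p_t$ from $u$ to $S$; $S$ is nonempty by Lemma~\ref{row-1-row-2}. Then $p_{t-1}\in\partial S$ and lies in the middle region, so Lemma~\ref{ultra 2 secure pmpn lemma 1} puts every neighbour of $p_{t-1}$ into $S$, which forces $t=1$. This proves $V-S=\partial S$, exactly the paper's opening assertion. Lemma~\ref{ultra 2 secure pmpn lemma 1} then applies to every vertex of $V-S$. In particular $w\in N(u)\subseteq S$, which rules out $d=1$, and the closest-pair choice becomes unnecessary. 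With this repair your proof is complete.
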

	\begin{proof}  Let $x$, $y$ be any two vertices in $V-S$.
		As a consequence of Lemma \ref{ultra 2 secure pmpn lemma 1} and Lemma \ref{row-1-row-2}, it follows that $V-S=\partial S$ and $N(x)\cap N(y)=\emptyset$, $N(x)\cup N(y)\subseteq Bord(S)$ for any distinct $x, y\in \partial S$. Thus, distance between $x$ and $y$ is at least 3. Suppose the distance between $x$ and $y$ is at most 4. Then there exists a vertex $w$ in $x$-$y$ path, which is at a distance at most 2 from both $x$ and $y$. Consider $X=N(x)\cup N(y)$. Then $|N[X]\cap S|\leq 23$ and $|X|=8$. Then applying Theorem \ref{char ultra k secure}, we get $23\geq |N[X]\cap S|\geq 2|X|+\underset{z\in X}{\sum}|N[z]-S|=16+8=24$, a contradiction. Thus, the distance between any two vertices in $V-S$ is at least 5. 
	\end{proof}
	The next theorem gives bounds for $s_2^u(P_m\square P_n)$.
	\begin{theorem}\label{pmpn ultra 2 security number bound}
		For integers $m,n\geq 5$, $$ \left\lceil\frac{12mn+4m+4n-8\left\lfloor\frac{m}{5}\right\rfloor-8\left\lfloor\frac{n}{5}\right\rfloor-12}{13}\right\rceil \le s_2^u(P_m\square P_n)\le mn-\left\lfloor\frac{m}{5}\right\rfloor\left\lfloor\frac{n}{5}\right\rfloor.$$
	\end{theorem}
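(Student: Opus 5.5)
The upper bound comes from an explicit construction checked against Theorem \ref{char ultra k secure}. The lower bound comes from a packing argument built on Lemma \ref{row-1-row-2} and Lemma \ref{ultra 2 secure pmpn lemma 2}.

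\textbf{Upper bound.} Put $I=\{3,8,13,\ldots\}\cap[3,m-2]$ and $J=\{3,8,13,\ldots\}\cap[3,n-2]$, so that $|I|=\lfloor m/5\rfloor$ and $|J|=\lfloor n/5\rfloor$. Take $S=V-\{v_{ij}: i\in I,\ j\in J\}$. Any two removed vertices are at distance at least $5$, and none lies in the outer two rows or columns. Hence for each hole $x$, the set $D_x=N[N[x]]-\{x\}$ (a diamond of $12$ vertices) lies inside $S$, and the sets $N[N[x]]$ for distinct holes are pairwise disjoint. Then $Bord(S)$ is the disjoint union of the sets $N(x)$, and every border vertex has exactly one neighbour outside $S$.

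For $X\subseteq Bord(S)$, split $X=\bigcup_x X_x$ with $X_x\subseteq N(x)$. Since the sets $N[X_x]\cap S\subseteq D_x$ are disjoint, it suffices to verify $|N[X_x]\cap S|\ge 3|X_x|$ for each $x$. Up to symmetry there are few cases:
\begin{itemize}
\item $|X_x|=1$ gives $4\ge 3$;
\item two opposite neighbours give $8\ge 6$, and two neighbours at a corner give $7\ge 6$;
\item three neighbours give $10\ge 9$;
\item all four neighbours give $12\ge 12$.
\end{itemize}
So $S$ is ultra $2$-secure by Theorem \ref{char ultra k secure}, and $s_2^u\le mn-\lfloor m/5\rfloor\lfloor n/5\rfloor$.

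\textbf{Lower bound.} Let $S$ be a minimum ultra $2$-secure set and $h=|V-S|$. By Lemma \ref{row-1-row-2}, every hole lies in rows $3,\ldots,m-2$ and columns $3,\ldots,n-2$. By Lemma \ref{ultra 2 secure pmpn lemma 2}, holes are pairwise at distance at least $5$, so the $13$-vertex diamonds $N[N[x]]$ are pairwise disjoint subsets of $V$. This already gives $13h\le mn$. The sharper bound comes from the frame $F=Row(1)\cup Row(2)\cup Row(m-1)\cup Row(m)\cup Col(1)\cup Col(2)\cup Col(n-1)\cup Col(n)$, which has $|F|=4m+4n-16$.

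A diamond meets $F$ only when its centre lies in row $3$, row $m-2$, column $3$ or column $n-2$, and then it meets $F$ in $4$ vertices, or $8$ at a corner position. Because holes are $5$ apart, row $3$ contains at most $\lceil (n-4)/5\rceil$ holes, and similarly for the other three lines.

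The plan is to show that the frame vertices not covered by any diamond number at least
\[
4m+4n-16-8\lfloor m/5\rfloor-8\lfloor n/5\rfloor+4 .
\]
The $+4$ is the correction needed to reach the constant $-12$ in the statement. This gives
\[
13h\le mn-4m-4n+12+8\lfloor m/5\rfloor+8\lfloor n/5\rfloor .
\]
Substituting into $s_2^u=mn-h$ and taking the ceiling yields the stated lower bound.

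\textbf{Expected obstacle.} The main difficulty is this frame bookkeeping. One has to bound how many holes lie on the four lines adjacent to the frame, and handle corner holes (for example $v_{33}$), whose diamonds meet two sides of the frame. The counts must be converted into exactly $\lfloor m/5\rfloor$ and $\lfloor n/5\rfloor$ with the constant $-12$. I would first try to pair each hole on row $3$ with a block of $5$ consecutive columns among columns $3,\ldots,n-2$, and then treat the corners separately.
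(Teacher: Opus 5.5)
Your upper bound is correct, and it is more complete than the paper's. The paper only cites Lemma~\ref{ultra 2 secure pmpn lemma 2}, which is a necessary condition, and never exhibits a set. Your case check does not use the specific positions $3,8,13,\ldots$. It shows that \emph{any} set of holes lying in rows $3,\ldots,m-2$ and columns $3,\ldots,n-2$, pairwise at distance at least $5$, has an ultra $2$-secure complement.

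The lower-bound plan cannot be completed, and the stated lower bound is false. Your premise that $N[N[x]]$ meets the frame $F$ only when $x$ lies in row $3$, row $m-2$, column $3$ or column $n-2$ does not hold. A hole $v_{4j}$ has $v_{2j}\in Row(2)$ in its diamond, and a hole $v_{i4}$ has $v_{i2}\in Col(2)$; the same happens for row $m-3$ and column $n-3$. Also, a corner hole such as $v_{33}$ meets $F$ in $7$ vertices, not $8$. More decisively, the inequality you set out to prove (at most $8\lfloor m/5\rfloor+8\lfloor n/5\rfloor-4$ frame vertices covered) is false. Here is a counterexample in $P_9\square P_9$:
\begin{itemize}
\item Take the holes $v_{33},v_{74},v_{47}$. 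They lie in rows and columns $3,\ldots,7$, with pairwise distances $5,5,6$.
\item By your own upper-bound verification, $S=V-\{v_{33},v_{74},v_{47}\}$ is ultra $2$-secure with $|S|=78$.
\item The claimed lower bound is $\lceil 1016/13\rceil=79$.
\item The three diamonds cover $7+5+5=17>12$ frame vertices.
\end{itemize}
The excess has two sources. One is $v_{72}$ and $v_{27}$, contributed by the column-$4$ and row-$4$ holes that your premise omits. The other is that only one corner position is occupied, so your $+4$ corner correction is unavailable.

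The paper's own proof makes the same unjustified count. It bounds the overlap with $Col(2)$ by $3\lfloor m/5\rfloor$ using only column-$3$ holes, and its $+4$ presupposes holes at all four corner positions. No refinement of the frame bookkeeping can reach the constant in the statement. A correct count must include holes in rows $4, m-3$ and columns $4, n-3$, and the lower bound itself has to be weakened.
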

	\begin{proof} By Lemma \ref{ultra 2 secure pmpn lemma 2}, it directly follows that $s_2^u(P_m\square P_n)\leq mn-\left\lfloor\frac{m}{5}\right\rfloor\left\lfloor\frac{n}{5}\right\rfloor$. Let $S$ be an ultra $2$-secure set in $P_m\square P_n$. By Lemma \ref{ultra 2 secure pmpn lemma 1}, $\underset{v\in \partial S}{\cup}\big(N[N(v)]\cap S\big)\subseteq S$. By Lemma \ref{row-1-row-2}, $Row(i), Col(j)\subseteq S$, $i\in \{1,2,m-1,m\}$, $j\in \{1,2,n-1,n\}$. Thus, $$|S|\geq \big|\underset{v\in \partial S}{\cup}\big(N[N(v)]\cap S\big)\ \ \bigcup \underset{i \in \{1,2,m-1,m\}}{\cup}Row(i)\ \ \bigcup\underset{j \in \{1,2,n-1,n\}}{\cup}Col(j)\big|.$$
		
		\begin{itemize}
			\item By Lemma \ref{ultra 2 secure pmpn lemma 1} and Lemma \ref{ultra 2 secure pmpn lemma 2}, it follows that $V-S=\partial S$ and the members of $\{N[N(v)]\cap S: v\in \partial S\}$ are pairwise disjoint.
			\item By Lemma \ref{row-1-row-2}, $v=v_{ij}$ for some $i,j$, $2<i<m-1$, $2<j<n-1$. Hence $\underset{z\in N(v)}{\sum}|N[z]-S|=4$. Thus, by Theorem \ref{char ultra k secure}, for any $v\in \partial S$, $|N[N(v)]\cap S|\geq 2|N(v)|+\underset{z\in N(v)}{\sum}|N[z]-S|=8+4=12$.
			\item  By Lemma \ref{ultra 2 secure pmpn lemma 2},  $|Col(3)\cap (V-S)|\le \left\lfloor\frac{m}{5}\right\rfloor$. Therefore there can be at most $\left\lfloor\frac{m}{5}\right\rfloor$ vertices in $Col(1)$, which also belong to some member of the collection $\{N[N(v)]\cap S: v\in \partial S\}$ and there can be at most $3\left\lfloor\frac{m}{5}\right\rfloor$ such vertices in $Col(2)$. Similar argument holds for $Row(i), Col(j)$, $i\in \{1, 2, m-1, m\}$, $j\in \{n-1, n\}$.
		\end{itemize}
		Therefore, 
		\begin{eqnarray*}
			|S|&\geq&4m+4n-16+\underset{v\in V-S}{\sum}\big|N[N(v)]\cap S\big|-8\left\lfloor\frac{m}{5}\right\rfloor-8\left\lfloor\frac{n}{5}\right\rfloor+4\\&\geq& 4m+4n-8\left\lfloor\frac{m}{5}\right\rfloor-8\left\lfloor\frac{n}{5}\right\rfloor-12+12|V-S|
			\\&=&4m+4n-8\left\lfloor\frac{m}{5}\right\rfloor-8\left\lfloor\frac{n}{5}\right\rfloor-12+12(mn-|S|). 
		\end{eqnarray*}
		Thus we get $|S|\geq \frac{1}{13}\big(12mn+4m+4n-8\left\lfloor\frac{m}{5}\right\rfloor-8\left\lfloor\frac{n}{5}\right\rfloor-12\big)$, which completes the proof.
	\end{proof}
	\noindent The bound given in the above theorem is sharp. For $5\le m\le 7$, the upper bound matches the lower bound, and equality is achieved for all $n\ge 5$. Further, for all $m,n\leq 14$, the equality is attained at the lower bound specified in the above theorem. 
	
		By the above discussions, the problem of finding the ultra 2-security number of $P_m\square P_n$ is equivalent to find the maximum number of rhombuses of equal sides that can be packed inside a rectangle as shown in Figure \ref{ultra 2-security example}. 
	\begin{figure}[h]
		\tikzset{every picture/.style={line width=0.75pt}} 
		
		\begin{tikzpicture}[x=0.75pt,y=0.75pt,yscale=-1,xscale=1]
			
			\draw  [draw opacity=0] (108,12) -- (369,12) -- (369,273) -- (108,273) -- cycle ; \draw   (108,12) -- (108,273)(128,12) -- (128,273)(148,12) -- (148,273)(168,12) -- (168,273)(188,12) -- (188,273)(208,12) -- (208,273)(228,12) -- (228,273)(248,12) -- (248,273)(268,12) -- (268,273)(288,12) -- (288,273)(308,12) -- (308,273)(328,12) -- (328,273)(348,12) -- (348,273)(368,12) -- (368,273) ; \draw   (108,12) -- (369,12)(108,32) -- (369,32)(108,52) -- (369,52)(108,72) -- (369,72)(108,92) -- (369,92)(108,112) -- (369,112)(108,132) -- (369,132)(108,152) -- (369,152)(108,172) -- (369,172)(108,192) -- (369,192)(108,212) -- (369,212)(108,232) -- (369,232)(108,252) -- (369,252)(108,272) -- (369,272) ; \draw    ;
			\draw   (148,12) -- (188,52) -- (148,92) -- (108,52) -- cycle ;
			\draw   (188,72) -- (228,112) -- (188,152) -- (148,112) -- cycle ;
			\draw   (228,132) -- (268,172) -- (228,212) -- (188,172) -- cycle ;
			\draw   (268,192) -- (308,232) -- (268,272) -- (228,232) -- cycle ;
			\draw   (248,32) -- (288,72) -- (248,112) -- (208,72) -- cycle ;
			\draw   (288,92) -- (328,132) -- (288,172) -- (248,132) -- cycle ;
			\draw   (328,152) -- (368,192) -- (328,232) -- (288,192) -- cycle ;
			\draw   (148,152) -- (188,192) -- (148,232) -- (108,192) -- cycle ;
			\draw   (328,12) -- (368,52) -- (328,92) -- (288,52) -- cycle ;
			\draw  [fill={rgb, 255:red, 0; green, 0; blue, 0 }  ,fill opacity=1 ] (143.75,52) .. controls (143.75,49.65) and (145.65,47.75) .. (148,47.75) .. controls (150.35,47.75) and (152.25,49.65) .. (152.25,52) .. controls (152.25,54.35) and (150.35,56.25) .. (148,56.25) .. controls (145.65,56.25) and (143.75,54.35) .. (143.75,52) -- cycle ;
			\draw  [fill={rgb, 255:red, 0; green, 0; blue, 0 }  ,fill opacity=1 ] (243.75,72) .. controls (243.75,69.65) and (245.65,67.75) .. (248,67.75) .. controls (250.35,67.75) and (252.25,69.65) .. (252.25,72) .. controls (252.25,74.35) and (250.35,76.25) .. (248,76.25) .. controls (245.65,76.25) and (243.75,74.35) .. (243.75,72) -- cycle ;
			\draw  [fill={rgb, 255:red, 0; green, 0; blue, 0 }  ,fill opacity=1 ] (183.75,112) .. controls (183.75,109.65) and (185.65,107.75) .. (188,107.75) .. controls (190.35,107.75) and (192.25,109.65) .. (192.25,112) .. controls (192.25,114.35) and (190.35,116.25) .. (188,116.25) .. controls (185.65,116.25) and (183.75,114.35) .. (183.75,112) -- cycle ;
			\draw  [fill={rgb, 255:red, 0; green, 0; blue, 0 }  ,fill opacity=1 ] (223.75,172) .. controls (223.75,169.65) and (225.65,167.75) .. (228,167.75) .. controls (230.35,167.75) and (232.25,169.65) .. (232.25,172) .. controls (232.25,174.35) and (230.35,176.25) .. (228,176.25) .. controls (225.65,176.25) and (223.75,174.35) .. (223.75,172) -- cycle ;
			\draw  [fill={rgb, 255:red, 0; green, 0; blue, 0 }  ,fill opacity=1 ] (263.75,232) .. controls (263.75,229.65) and (265.65,227.75) .. (268,227.75) .. controls (270.35,227.75) and (272.25,229.65) .. (272.25,232) .. controls (272.25,234.35) and (270.35,236.25) .. (268,236.25) .. controls (265.65,236.25) and (263.75,234.35) .. (263.75,232) -- cycle ;
			\draw  [fill={rgb, 255:red, 0; green, 0; blue, 0 }  ,fill opacity=1 ] (323.75,192) .. controls (323.75,189.65) and (325.65,187.75) .. (328,187.75) .. controls (330.35,187.75) and (332.25,189.65) .. (332.25,192) .. controls (332.25,194.35) and (330.35,196.25) .. (328,196.25) .. controls (325.65,196.25) and (323.75,194.35) .. (323.75,192) -- cycle ;
			\draw  [fill={rgb, 255:red, 0; green, 0; blue, 0 }  ,fill opacity=1 ] (283.75,132) .. controls (283.75,129.65) and (285.65,127.75) .. (288,127.75) .. controls (290.35,127.75) and (292.25,129.65) .. (292.25,132) .. controls (292.25,134.35) and (290.35,136.25) .. (288,136.25) .. controls (285.65,136.25) and (283.75,134.35) .. (283.75,132) -- cycle ;
			\draw  [fill={rgb, 255:red, 0; green, 0; blue, 0 }  ,fill opacity=1 ] (143.75,192) .. controls (143.75,189.65) and (145.65,187.75) .. (148,187.75) .. controls (150.35,187.75) and (152.25,189.65) .. (152.25,192) .. controls (152.25,194.35) and (150.35,196.25) .. (148,196.25) .. controls (145.65,196.25) and (143.75,194.35) .. (143.75,192) -- cycle ;
			\draw  [fill={rgb, 255:red, 0; green, 0; blue, 0 }  ,fill opacity=1 ] (323.75,52) .. controls (323.75,49.65) and (325.65,47.75) .. (328,47.75) .. controls (330.35,47.75) and (332.25,49.65) .. (332.25,52) .. controls (332.25,54.35) and (330.35,56.25) .. (328,56.25) .. controls (325.65,56.25) and (323.75,54.35) .. (323.75,52) -- cycle ;
		\end{tikzpicture}\caption{A minimum ultra 2-secure set $S$ in $P_{14}\square P_{14}$. Black dots denote the vertices of $V-S$.}\label{ultra 2-security example}
	\end{figure}

	Bounds for $s_2^u(P_m\square C_n)$ can be established that are similar to those in Theorem \ref{pmpn ultra 2 security number bound}. The following lemma is similar to Lemma \ref{row-1-row-2}.
	
	\begin{lemma}\label{ultra 2 secure pmcn lemma}
		Let $S$ be an ultra 2-secure set of $P_m\square C_n$. Then $Row(i) \subseteq S$ for all $i\in \{1, 2, m-1, m\}$. 
	\end{lemma}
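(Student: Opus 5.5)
Because $C_n$ wraps around, $P_m\square C_n$ has no boundary columns, and the only boundary vertices are those in $Row(1)$ and $Row(m)$. So I will follow the proof of Lemma \ref{row-1-row-2}, but keep only its row part. By the symmetry $i\mapsto m+1-i$ it is enough to show that no $v_{ij}$ with $i\in\{1,2\}$ lies in $\partial S$, where column indices are read modulo $n$.

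Once this is known, the conclusion follows as in Lemma \ref{row-1-row-2}. I will first check that the analogue of Lemma \ref{ultra 2 secure pmpn lemma 1} holds for every $v_{ij}\in\partial S$ with $2<i<m-1$. Its proof only uses the local $5\times 5$ neighbourhood of $v_{ij}$, and that neighbourhood looks exactly the same in $P_m\square C_n$ when $n\ge 5$. This gives the analogue of Observation \ref{obs every 3-m-2 row/ 3-n-2 column  intersects S}: $Row(3)\cap S\neq\emptyset$. Suppose some vertex of $Row(1)\cup Row(2)$ were outside $S$. Since no vertex there lies in $\partial S$, no vertex of these two rows is adjacent to $S$. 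The rows are connected cycles, so both would lie entirely in $V-S$, and then $Row(3)$ would meet $S$ only at vertices with no neighbours in $Row(2)$. That is impossible, since every vertex of $Row(3)$ is adjacent to a vertex of $Row(2)$. Hence $Row(1)\cup Row(2)\subseteq S$. Small $n$ ($n=3,4$) I will handle by direct inspection or exclude, as the paper does elsewhere.

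For the main step, take $v_{ij}\in\partial S$ with $i\in\{1,2\}$ and repeat the case analysis of Lemma \ref{row-1-row-2} on the number $|N(v_{ij})\cap S|$. The corner case (Case 1 of Lemma \ref{row-1-row-2}) disappears, because every vertex now has both horizontal neighbours $v_{i(j\pm1)}$. The remaining cases go as follows.
\begin{enumerate}
\item If $|N(v_{ij})\cap S|\ge 3$, take $X=N(v_{ij})\cap S\subseteq Bord(S)$. Theorem \ref{char ultra k secure} then fails, because $|N[X]\cap S|$ is too small compared with $2|X|+|X|$.
\item If $|N(v_{ij})\cap S|=1$, I use the same three-vertex sets $X$ as in Lemma \ref{row-1-row-2}. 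Proposition \ref{ultra k defense proposition} forces the neighbours of the single defender into $S$, and these sets then give the same counting contradiction.
\item If $|N(v_{ij})\cap S|=2$, I use the same subcases $i=1$ and $i=2$ with the same choices of $X$.
\end{enumerate}
All these sets involve only columns $j-2,\dots,j+2$, so for $n\ge5$ they are distinct vertices and the counts of $|N[X]\cap S|$ are unchanged.

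The main obstacle is checking the counts when $n$ is small. For $n=5$ the columns $j-2$ and $j+2$ are adjacent, which could add neighbours and raise $|N[X]\cap S|$ above the bounds used in Lemma \ref{row-1-row-2}. My first attempt is to verify that every $X$ used involves at most columns $j-1,j,j+1$, apart from one vertex on one side, so the wraparound creates no extra incidences. Failing that, I will recount $N[X]$ directly for $n=5$; the inequalities have slack of one or two, which should still suffice.
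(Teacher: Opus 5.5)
Your plan follows the route the paper intends: it justifies this lemma only by saying it is similar to Lemma \ref{row-1-row-2}. However, one step you import from that lemma is false, so ``the same sets $X$'' does not finish the argument. In your item 1, taking $X=N(v_{ij})\cap S$ does give a contradiction for $i=1$ ($|N[X]\cap S|\le 8<9$) and for $i=2$ with all four neighbours in $S$ ($11<12$). It does not for $i=2$ with exactly three. Applying Theorem \ref{char ultra k secure} to singletons shows that every vertex of $Bord(S)$ has exactly one neighbour outside $S$. Now suppose $v_{2j},v_{3j}\notin S$ and take $X=\{v_{1j},v_{2(j-1)},v_{2(j+1)}\}$. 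The right-hand side is $2\cdot 3+3=9$. But $N[X]$ consists of the ten vertices $v_{1(j-1)},v_{1j},v_{1(j+1)},v_{2(j-2)},\dots,v_{2(j+2)},v_{3(j-1)},v_{3(j+1)}$, and only $v_{2j}$ among them lies outside $S$. So $|N[X]\cap S|=9$ and the inequality holds. The same failure occurs when the missing neighbour is $v_{2(j\pm1)}$ ($9$ against $9$) or $v_{1j}$ ($10$ against $9$). Lemma \ref{row-1-row-2} asserts exactly this step, so citing it does not help.

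The repair is to enlarge $X$. Let $y$ be the neighbour of $v_{2j}$ outside $S$, and add to $X$ the two vertices other than $v_{2j}$ that are adjacent to both $y$ and $X$. They lie in $S$, because each element of $X$ has $v_{2j}$ as its only outside neighbour. They lie in $Bord(S)$, because they are adjacent to $y$. The enlarged set $X'$ has $|X'|=5$ and right-hand side $15$, while $|N[X']\cap S|\le 13$ for $y=v_{3j}$ and $|N[X']\cap S|\le 12$ otherwise.

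Your item 3 has a related problem for $i=2$. The paper dismisses the configurations where the two neighbours in $S$ lie in the same row or column by appealing to $|N(v_{2j})\cap S|\le 2$, and that bound rests on the faulty step above. Its ``without loss of generality $u=v_{3j}$'' is also not a symmetry, since rows $1$ and $3$ play different roles relative to row $2$. So the case $\{u,w\}=\{v_{1j},v_{2(j\pm1)}\}$ is never treated. These cases close directly:
\begin{itemize}
\item If $\{u,w\}$ is $\{v_{1j},v_{3j}\}$ or $\{v_{1j},v_{2(j+1)}\}$ (the case $v_{2(j-1)}$ being symmetric), then $v_{1(j-1)}$ and $v_{1j}$ are border vertices with distinct outside neighbours $v_{2(j-1)}$ and $v_{2j}$. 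Taking $X=\{v_{1(j-1)},v_{1j}\}$ gives $|N[X]\cap S|\le 4<6$.
\item If $\{u,w\}=\{v_{2(j-1)},v_{2(j+1)}\}$, then $v_{1j}\in\partial S$ with $N(v_{1j})\cap S=\{v_{1(j-1)},v_{1(j+1)}\}$. This is your case $i=1$, which you can settle first since it does not depend on $i=2$.
\end{itemize}

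Finally, the obstacle you flag for $n=5$ does not arise. Reducing column indices modulo $n$ is a bijection on every closed neighbourhood, so wraparound can only merge vertices of $N[X]$ and never create new ones. Hence the grid upper bounds on $|N[X]\cap S|$ stay valid. Meanwhile $2|X|+\sum_{x\in X}|N[x]-S|$ is unchanged as long as the vertices of $X$ remain distinct, and they do, because every set used spans at most three consecutive columns.
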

	For integers $m,n$ with $\min\{m,n\}\leq 4$, $s_2^u(P_m\square C_n)=mn$. Note that Lemma \ref{ultra 2 secure pmpn lemma 1} and Lemma \ref{ultra 2 secure pmpn lemma 2} is true for $P_m\square C_n$ also. Therefore similar to Theorem \ref{pmpn ultra 2 security number bound}, the next  theorem can be proved using Lemma \ref{ultra 2 secure pmpn lemma 1}, Lemma \ref{ultra 2 secure pmpn lemma 2} and Lemma \ref{ultra 2 secure pmcn lemma}.
	
	\begin{theorem}
		For integers $m,n\geq 5$, $$ \left\lceil\frac{12mn+4n-8\left\lfloor\frac{n}{5}\right\rfloor}{13}\right\rceil \le s_2^u(P_m\square C_n)\le mn-\left\lfloor\frac{m}{5}\right\rfloor\left\lfloor\frac{n}{5}\right\rfloor.$$
	\end{theorem}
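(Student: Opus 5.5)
The plan is to follow the proof of Theorem \ref{pmpn ultra 2 security number bound}, replacing Lemma \ref{row-1-row-2} by Lemma \ref{ultra 2 secure pmcn lemma}. Let $S$ be an ultra 2-secure set of $P_m\square C_n$, with $m,n\ge 5$. Rows run along $C_n$, so there are no boundary columns and only the four rows $Row(1),Row(2),Row(m-1),Row(m)$ are forced into $S$. I first check that Lemma \ref{ultra 2 secure pmpn lemma 1} holds for every vertex $v_{ij}\in\partial S$ with $2<i<m-1$ and $j$ arbitrary (indices of columns taken mod $n$). Its proof uses only local configurations of degree-4 vertices and Theorem \ref{char ultra k secure}, and all vertices in rows $2,\dots,m-1$ have degree $4$ in $P_m\square C_n$. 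The same holds for Lemma \ref{ultra 2 secure pmpn lemma 2}, whose counting ($|X|=8$, $|N[X]\cap S|\le 23$) is local and uses $n\ge 5$ so that the wraparound does not create extra adjacencies.

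For the upper bound, place vertices of $V-S$ on a lattice pattern with pairwise distance at least $5$. Choose $\lfloor m/5\rfloor\lfloor n/5\rfloor$ vertices in rows $3,\dots,m-2$ and spaced $5$ apart in columns, in the same pattern as the $P_m\square P_n$ construction. Then verify via Theorem \ref{char ultra k secure} that the complement is ultra 2-secure. Each $v\in V-S$ has four border neighbours, and $N[N(v)]\cap S$ has $12$ vertices, disjoint for different $v$. For any $X\subseteq Bord(S)$, split $X$ according to which $v$ it is adjacent to. The inequality $|N[X]\cap S|\ge 2|X|+|X|$ reduces to checking it inside a single diamond $N[N(v)]$, which is a finite check. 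Pairwise distance at least $5$ is what makes the pieces non-interacting.

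For the lower bound, mirror the earlier computation. The sets $N[N(v)]\cap S$ for $v\in V-S$ are pairwise disjoint, each has size at least $2\cdot4+4=12$, and all avoid rows $1$ and $m$. The four forced rows contribute $4n$ vertices, but their overlap with the diamonds must be subtracted. A diamond centred in row $3$ meets row $1$ in one vertex and row $2$ in three. By Lemma \ref{ultra 2 secure pmpn lemma 2}, row $3$ contains at most $\lfloor n/5\rfloor$ vertices of $V-S$, and symmetrically for row $m-2$. The total overlap is therefore at most $8\lfloor n/5\rfloor$. Hence
\[|S|\ge 4n-8\left\lfloor\tfrac{n}{5}\right\rfloor+12|V-S| = 4n-8\left\lfloor\tfrac{n}{5}\right\rfloor+12(mn-|S|).\]
Rearranging gives $13|S|\ge 12mn+4n-8\lfloor n/5\rfloor$. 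Taking the ceiling yields the stated lower bound. There is no $-12$ correction here, since there are no corner double-counts in the cylinder.

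The main obstacle is transferring Lemmas \ref{ultra 2 secure pmpn lemma 1} and \ref{ultra 2 secure pmpn lemma 2} to the cylinder, especially at small $n$ such as $n=5$, where the wraparound might merge neighbourhoods. I would handle this by noting that for $n\ge5$ the closed second neighbourhood of a vertex in $C_n$ has $5$ distinct vertices. All sets used in those proofs therefore have the same sizes as in the grid. The disjointness argument in the upper-bound construction also needs column spacing $5$ around the cycle, which is guaranteed by using only $\lfloor n/5\rfloor$ columns.
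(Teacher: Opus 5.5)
\textbf{Shared plan and the failing step.} Your plan is the paper's own: its proof is only the remark that the argument is ``similar to'' Theorem \ref{pmpn ultra 2 security number bound}, using Lemmas \ref{ultra 2 secure pmpn lemma 1}, \ref{ultra 2 secure pmpn lemma 2} and \ref{ultra 2 secure pmcn lemma}. Your transfer of those lemmas to the cylinder and your upper-bound construction are fine.

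The step that fails is the overlap estimate. Write $A$ for the union of the four forced rows and $B$ for the union of the diamonds $N[N(v)]\cap S$; you claim $|A\cap B|\le 8\lfloor n/5\rfloor$. You only count diamonds centred in rows $3$ and $m-2$. However:
\begin{itemize}
\item a diamond centred at $v_{4j}$ contains $v_{2j}$;
\item a diamond centred at $v_{(m-3)j}$ contains $v_{(m-1)j}$;
\item for $m=6$, every diamond meets the forced rows in $5$ vertices, not $4$.
\end{itemize}
A row-$4$ centre only needs column distance $4$ from a row-$3$ centre. So whenever $n\bmod 5\in\{3,4\}$ you can fit $\lfloor n/5\rfloor$ centres in row $3$ plus one in row $4$, and the overlap with $Row(1)\cup Row(2)$ alone reaches $4\lfloor n/5\rfloor+1$. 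The template proof of Theorem \ref{pmpn ultra 2 security number bound} has the same hole: it ignores column-$4$ centres when bounding the overlap with $Col(2)$.

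\textbf{The stated lower bound is false.} This is not a bookkeeping slip that can be patched. In $P_6\square C_8$ take $S=V-\{v_{31},v_{45}\}$.
\begin{itemize}
\item The two removed vertices are at distance $1+4=5$. Their radius-$2$ balls are therefore disjoint, and each ball lies in $S$ except for its centre.
\item Every border vertex has degree $4$ and exactly one neighbour outside $S$.
\item Inside one ball, $|N[X]\cap S|$ equals $4$, at least $7$, $10$ and $12$ for $|X|=1,2,3,4$. Hence $|N[X]\cap S|\ge 3|X|$ for all $X\subseteq Bord(S)$, as Theorem \ref{char ultra k secure} requires.
\item Explicitly, an ultra $2$-defense gives each border vertex itself, its outer neighbour, and one of the four diagonal vertices, assigned cyclically.
\end{itemize}
Thus $s_2^u(P_6\square C_8)\le 46$, whereas the claimed lower bound is $\lceil (576+32-8)/13\rceil=\lceil 600/13\rceil=47$.

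Your intermediate inequality $|S|\ge 4n-8\lfloor n/5\rfloor+12|V-S|$ would read $46\ge 48$ here; the true overlap is $10$, not at most $8$. So no argument can prove the statement as written. A correct lower bound has to replace $8\lfloor n/5\rfloor$ by a valid bound on the overlap that accounts for centres in rows $4$ and $m-3$, and, for small $m$, for diamonds that reach forced rows at both ends.
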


Roughly, $s_2(P_m\square C_n)=mn-x$, where $x$ is the maximum number of disjoint rhombuses that can be packed similarly as in Figure \ref{ultra 2-security example} on a surface of a cylindrical square grid. Also, $s_2(C_m \square C_n)=mn-y$ where $y$ is the maximum number of disjoint rhombuses that can be packed as in Figure \ref{ultra 2-security example} on a surface of a torus.
	
	\begin{theorem}
		For integers $m,n\geq 5$, $s_3^u(P_m\square P_n)=mn$.
	\end{theorem}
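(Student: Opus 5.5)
We argue by contradiction. Let $S\subset V$ be a proper ultra 3-secure set of $P_m\square P_n$. By Remark \ref{degree of border vertex}, every vertex of $Bord(S)$ has degree at least $4$, so $Bord(S)$ contains no boundary vertex of the grid and consists only of vertices $v_{ij}$ with $2\le i\le m-1$, $2\le j\le n-1$. Applying Theorem \ref{char ultra k secure} with $X=\{x\}$ to a single border vertex $x$ gives
\[
|N[x]\cap S|\ge 3+|N[x]-S|.
\]
Since $|N[x]\cap S|+|N[x]-S|=5$ and $|N[x]-S|\ge 1$, it follows that $|N[x]-S|=1$ and $|N[x]\cap S|=4$. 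Thus each border vertex has exactly one neighbour outside $S$.

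Next I use the whole vertex set rather than a single vertex. By Theorem \ref{border}, $|Bord(S)|\le |S|/4$. Take $X=Bord(S)$ in Theorem \ref{char ultra k secure}. This gives $|N[Bord(S)]\cap S|\ge 4|Bord(S)|$. Each border vertex has at most $3$ neighbours in $S$ besides itself, so this inequality says the closed neighbourhoods $N[x]\cap S$, for $x\in Bord(S)$, must be pairwise disjoint. In particular, no two border vertices are adjacent, and no two border vertices share a common neighbour in $S$.

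Now pick a vertex $y\in\partial S$, which exists because $S$ is a nonempty proper subset. Some neighbour $x$ of $y$ lies in $Bord(S)$, and $x$ is interior to the grid. Write $x=v_{ij}$ and, say, $y=v_{(i+1)j}$. The other three neighbours of $x$, namely $v_{(i-1)j}$, $v_{i(j-1)}$ and $v_{i(j+1)}$, all lie in $S$. Consider $v_{i(j+1)}$: it is adjacent to $v_{(i+1)(j+1)}$.

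If $v_{(i+1)(j+1)}\notin S$, then $v_{i(j+1)}$ is a border vertex adjacent to the border vertex $x$. This contradicts the disjointness established above.

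So $v_{(i+1)(j+1)}\in S$. But this vertex is adjacent to $y\notin S$, so it is a border vertex. It shares the common neighbour $v_{i(j+1)}\in S$ with $x$, which again contradicts disjointness. This holds provided the relevant vertices exist, i.e., provided column $j+1$ is inside the grid, which it is since $j\le n-1$.

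Hence no proper ultra 3-secure set exists, and $s_3^u(P_m\square P_n)=mn$.

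The main obstacle is making the disjointness step rigorous. The facts $|N[x]\cap S|=4$ and $\sum_{x\in Bord(S)}|N[x]-S|\ge|Bord(S)|$ force $|N[Bord(S)]\cap S|=\sum_x|N[x]\cap S|$ exactly, and equality in this union bound forces the sets $N[x]\cap S$ to be pairwise disjoint. If this global version proves awkward to state, I would instead apply Theorem \ref{char ultra k secure} directly to the two-element set $X=\{x,x'\}$, where $x'$ is the adjacent border vertex or the border vertex sharing a neighbour with $x$. In either case the overlap gives $|N[X]\cap S|\le 7<8=3|X|+\sum_{z\in X}|N[z]-S|$, a contradiction. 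This local version avoids the global counting entirely.
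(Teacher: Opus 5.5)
Your proof is correct, and it takes a genuinely different route from the paper's. The paper relies on the $k=2$ machinery. Since an ultra 3-secure set is also ultra 2-secure, Lemma~\ref{row-1-row-2} places any $v\in\partial S$ in rows $3,\dots,m-2$ and columns $3,\dots,n-2$. Lemma~\ref{ultra 2 secure pmpn lemma 1} then gives $N(v)\subseteq Bord(S)$, and Theorem~\ref{char ultra k secure} applied to $X=N(v)$ yields $12\ge 16$. You avoid both lemmas, whose proofs are long case analyses, and use only Theorem~\ref{char ultra k secure} on singletons and on $Bord(S)$ (or on pairs). For $k=\Delta-1=3$ this forces every border vertex to have degree $4$ and exactly one neighbour outside $S$, and makes the sets $N[x]\cap S$, $x\in Bord(S)$, pairwise disjoint. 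The 4-cycle $x,\,y,\,v_{(i+1)(j+1)},\,v_{i(j+1)}$ through the cut edge $xy$ then produces an overlap whether or not $v_{(i+1)(j+1)}$ lies in $S$. Your route buys self-containment and generality. It never uses $m,n\ge 5$, so it works for all $m,n\ge 3$, i.e.\ whenever $k=3<\Delta$ is admissible. The same argument also shows $s^u_{\Delta(G)-1}(G)=|V(G)|$ for every connected graph in which each edge lies on a 4-cycle. That covers $s_3^u(P_m\square C_n)=s_3^u(C_m\square C_n)=mn$, which the paper only asserts ``similarly''. The paper's proof is shorter on the page only because it reuses the ultra 2-secure lemmas already established. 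Your appeal to Theorem~\ref{border} is harmless but not needed.
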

	\begin{proof}
		Let $S$ be an ultra 3-secure set in $P_m\square P_n$. Suppose $S$ is a proper subset of $V$. Then there exists a vertex $v\in \partial S$. Since every ultra 3-secure set is an ultra 2-secure set, by Lemma \ref{row-1-row-2}, $3\leq i\leq m-2$ and $3\leq j\leq n-2$. Further by Lemma \ref{ultra 2 secure pmpn lemma 1}, all the neighbors of $v$ are in $S$. Then for $X=N(v)\subseteq Bord(S)$, $12\geq |N[X]\cap S|\geq 3|X|+\underset{z\in X}{\sum}|N[z]-S|=12+4=16$, a contradiction.
		Therefore $S=V$ and hence $s_3^u(P_m\square P_n)=mn$. 
	\end{proof}
	
	Similarly, it can be proved that for $m,n\geq 3$, $s_2(P_2\square C_n)=2n$ and $s_3^u(P_m\square C_n)=s_3^u(C_m\square C_n)=mn$. 
	
	\section{Conclusion}
 In this paper, a valuable characterization characterization of ultra $k$-secure sets is derived. Some properties of $s^u_k(G)$ and its variation with the addition or deletion of edges are discussed. Further, the construction of graphs with a specified $s_k^u(G)$ value is demonstrated. Additionally, ultra $k$-security numbers of grid like graphs are determined for $k=1, 3$ and bounds are obtained for $k=2$.  
 The following are additional research problems to pursue along similar lines.
 \begin{enumerate}
 	\item Find the exact values of $s_2^u(G)$ for grid like graphs.
 	\item In a graph $G$, for any $k\in \{1, 2,\ldots,\Delta (G)-2\}$, $s_k^u(G)\leq s_{k+1}^u(G)$. What is the maximum possible value $l\in \{1, 2,\ldots,\Delta (G)-1\}$ such that the value of $s_k^u(G)$ strictly increases with $k$ up to $l$?
 	\item For a given $v\in V(G)$, find an algorithm to construct an ultra $k$-secure set containing $v$. Find the smallest cardinality of an ultra $k$-secure set containing $v$.
 	\item It is known that deciding whether a given set S is secure set in a graph is co-NP-complete. One may study the complexity of finding an ultra $k$-secure set.
 	 
 \end{enumerate} 
	
\section{Acknowledgment}
	The first author would like to thank University Grants Commission, New Delhi, India for Junior Research Fellowship (Ref. No. 191620207518 dated 20/07/2020). The authors would like to thank Prof. S T Hedetniemi, School of Computing, Clemson University, USA and all the reviewers  for valuable suggestions. The authors also  like to thank Dr. B Sooryanarayana, Dr. Ambedkar Institute of Technology, Bengaluru, India for constant guidance.\\
~~\\
\noindent
{\bf Funding:} Not applicable\\
{\bf Data Availability:}  Not applicable\\
{\bf Code  Availability:}  Not applicable\\

\section*{Declaration}
Authors declare that there are no conflicts of interest in conducting this research.

\end{document}